\documentclass[10pt,a4paper]{amsart}
\usepackage[all,cmtip,ps]{xy}
\xyoption{rotate}
\usepackage{amsmath}
\usepackage{amsthm}
\usepackage{amssymb}
\usepackage{amscd} 
\usepackage{hyperref}
\usepackage{dashrule}
\usepackage[svgnames,hyperref]{xcolor}
\usepackage{empheq}
\usepackage[many]{tcolorbox}
\listfiles
\tcbset{highlight math style={enhanced,
  colframe=white!60!white,colback=lightgray!50!white,arc=4pt,boxrule=1pt}}
 \setcounter{tocdepth}{1}
\newcommand{\QcX}{{\mathrm {Qcoh}}(X)}
\newcommand{\QcY}{{\mathrm {Qcoh}}(Y)}
\newcommand{\chX}{{\mathrm {coh}}(X)}
\newcommand{\chY}{{\mathrm {coh}}(Y)}

\newcommand{\Pvu}{{}^{-1}\mathrm {Perv}(Y/X)}
\newcommand{\Pvz}{{}^{0}\mathrm {Perv}(Y/X)}
\newcommand{\cA}{\mathcal{A}}
\newcommand{\cB}{\mathcal{B}}
\newcommand{\cC}{\mathcal{C}}
\newcommand{\cL}{\mathcal{L}}

\newcommand{\cE}{\mathcal{E}}

\def\cHom{{\mathcal H}\!o\!m}

\newcommand{\cI}{\mathcal{I}}

\newcommand{\cD}{\mathcal{D}}
\newcommand{\cK}{\mathcal{K}}
\newcommand{\cG}{\mathcal{G}}
\newcommand{\cM}{\mathcal{M}}
\newcommand{\cN}{\mathcal{N}}
\newcommand{\cO}{\mathcal{O}}
\newcommand{\cT}{\mathcal{T}}
\newcommand{\cF}{\mathcal{F}}
\newcommand{\cR}{\mathcal{R}}
\newcommand{\cH}{\mathcal{H}}
\newcommand{\cP}{\mathcal{P}}
\newcommand{\cU}{\mathcal{U}}

\newcommand{\cX}{\mathcal{X}}
\newcommand{\cY}{\mathcal{Y}}

\newcommand{\bR}{\mathbf{R}}
\newcommand{\bL}{\mathbf{L}}

\newcommand{\bZ}{\mathbb{Z}}
\newcommand{\point}{{\scriptscriptstyle\bullet}}

\DeclareMathOperator{\Coker}{Coker}

\DeclareMathOperator{\Hom}{Hom}

\DeclareMathOperator{\End}{End}
\DeclareMathOperator{\Ext}{Ext}

\DeclareMathOperator{\Ker}{Ker}

\DeclareMathOperator{\im}{Im}

\DeclareMathOperator{\id}{id}

\newcommand*{\N}{\mathcal{N}}
\DeclareMathOperator{\R}{\mathbf{R}}

\newcommand*{\leff}{\textrm{\textup{-eff}}}
\newcommand*{\reff}{\textrm{\textup{eff-}}}
\newcommand*{\proj}{\textrm{\textup{proj}}}
\newcommand*{\lproj}{\textrm{\textup{-proj}}}
\newcommand*{\rproj}{\textrm{\textup{proj-}}}

\newcommand*{\lMod}{\textrm{\textup{-Mod}}}
\newcommand*{\rMod}{\textrm{\textup{Mod-}}}

\newcommand*{\lcoh}{\textrm{\textup{-coh}}}
\newcommand*{\rcoh}{\textrm{\textup{coh-}}}
\newcommand*{\lfp}{\textrm{\textup{-fp}}}
\newcommand*{\rfg}{\textrm{\textup{fg-}}}

\newcommand*{\rfp}{\textrm{\textup{fp-}}}
\newtheorem{theorem}{Theorem}[section]
\newtheorem{lemma}[theorem]{Lemma}
\newtheorem{proposition}[theorem]{Proposition}
\newtheorem{corollary}[theorem]{Corollary}
\theoremstyle{definition}
\newtheorem{conjecture}[theorem]{Conjecture}
\newtheorem{definition}[theorem]{Definition}

\newtheorem{remark}[theorem]{Remark}
\newtheorem{example}[theorem]{Example}
\newtheorem{numero}[theorem]{}

\def\cDll#1{{\cD}^{\leq #1}}
\def\cDgg#1{{\cD}^{\geq #1}}
\def\cTll#1{{\cT}^{\leq #1}}
\def\cTgg#1{{\cT}^{\geq #1}}

\def\ra{{\rightarrow}}
\newcommand{\hH}{\mathrm{H}}

\def\backmatter{\renewcommand{\baselinestretch}{1}\normalfont}


\begin{document}
\keywords{ Quasi-abelian category, $t$-structures, torsion pair, tilting objects, Bondal-Orlov conjecture, perverse coherent sheaves} 
 \subjclass{18E, 14F05, 14E99 , 16S90} 
\title[N-Quasi-Abelian Categories vs  N-Tilting Torsion Pairs]{N-Quasi-Abelian Categories \\vs  \\N-Tilting Torsion Pairs\\
\tiny{\tiny{with an application to
Flops of higher relative dimension}}
}
\author{Luisa Fiorot 
}
\address{
Dipartimento di Matematica ``Tullio Levi-Civita'', 
Universit\`a degli studi di Padova, via Trieste 63, I-35121 Padova Italy}
\email{luisa.fiorot@unipd.it}

\begin{abstract}
It is a well established fact that the notions of quasi-abelian categories
 and 
tilting torsion pairs are equivalent. This equivalence fits in a wider picture including tilting pairs of $t$-structures.

\smallskip

Firstly, we extend this picture into a hierarchy of
$n$-quasi-abelian categories
 and $n$-tilting torsion
classes. We prove that
any  $n$-quasi-abelian category $\cE$ admits a ``derived'' category  
$D(\cE)$  endowed with a $n$-tilting pair   of $t$-structures
 such that the respective hearts are derived equivalent. 
 
Secondly, we describe the hearts of these $t$-structures as quotient categories of coherent functors, generalizing Auslander's Formula.

Thirdly, we apply our results to Bridgeland's theory of perverse coherent sheaves
for   flop contractions.
In Bridgeland's work, the   relative dimension $1$ assumption guaranteed that $f_*$-acyclic coherent sheaves form  a $1$-tilting torsion class,
whose associated heart is  derived equivalent to $D(Y)$.
We generalize this theorem to  relative dimension $2$.
  \end{abstract}

\maketitle
\vspace*{-\baselineskip}%
{\footnotesize\tableofcontents}
\section*{Introduction}

In \cite[3.3.1]{BBD} Beilinson, Bernstein and Deligne introduced the notion of a
$t$-structure obtained by tilting
the natural one on $D(\cA)$ (derived category of an abelian category $\cA$) with respect to a torsion pair $(\cX,\cY)$. In
 \cite{HRS} Happel Reiten and Smal{\o} developed  this procedure, they 
proved 
the  Tilting Theorem: 
whenever $\cX$ is a  \emph{tilting torsion class} (\ref{D:tilttorpair})
on 
$\cA$ 
there is a triangulated equivalence
$D(\cH)\simeq D(\cA)$ where $\cH$ is the heart of the tilted $t$-structure.

In \cite{Schn} J.-P. Schneiders 
associated to any quasi-abelian category $\cE$ (\ref{N4.1})
a triangulated category $D(\cE)$  endowed with a $1$-tilting pair   of $t$-structures
$(\cR,\cL)$ (\ref{D:1tpt}) 
such that  $\cE=\cH_\cR\cap\cH_\cL$.

Rump in \cite{Rual}, followed by
 Bondal and Van den Bergh in   \cite[App. B]{BonVdBergh},
established 
 an  equivalence between the previous notions:
 given an additive category  $\cE$, the 
following properties are equivalent:
1) $\cE$ is a $1$-quasi-abelian category, 2) $\cE$ is a $1$-tilting torsion class, 3) $\cE$ is a $1$-cotilting torsion-free class, 4) $\cE$ is the 
intersection of the hearts of a $1$-tilting
pair of $t$-structures $(\cR, \cL)$  
on $D(\cE)$.  

\bigskip
This paper contains three main results.

\subsection{}
We propose an higher analog of the previous equivalence:
given an additive category  $\cE$, the 
following properties are equivalent:
1) $\cE$ is a $n$-quasi-abelian (\ref{D:n-qa}), 2) $\cE$ is a $n$-tilting torsion class (\ref{D:ntiltorcl}), 3) $\cE$ is a $1$-cotilting torsion-free class, 4) $\cE$ is the 
intersection of the hearts of a $n$-tilting
pair of $t$-structures $(\cR, \cL)$  
on $D(\cE)$.  

 In particular, we prove that the derived category of an $n$-quasi-abelian category $\cE$ has two canonical $t$-structures
(the left and the right one).
We can view the hearts of these $t$-structures as canonical abelian envelopes for $\cE$.

\subsection{}
We establish a new description
for the hearts of these $t$-structures
 as Gabriel quotients of
the category of coherent functors with respect to a suitable Serre subcategory of
effaceable functors   (\ref{T:HvsCF}).

%
For an abelian  ($0$-quasi-abelian) category $\cA$ this result reduces to 
the {\emph Auslander's Formula} introduced by H. Krause in \cite{KRdaf}.

%
\subsection{}
Our main application is the generalization of Bridgeland's theory of perverse coherent sheaves.
Let consider 
$Y\stackrel{f}\to X$  a flop contraction with $X$ and $Y$ varieties over $\Bbb C$, $Y$ smooth
and $Y^+\stackrel{f^+}\to X$ its flop.
The Bondal-Orlov conjecture predicts that
the derived categories $D(Y)$ and $D({Y^+})$
(of coherent $\cO$-modules) are equivalent.
 Bridgeland proved the Bondal-Orlov conjecture for threefolds (\cite{Bri1})
and  Van den Bergh  proposed a different proof relaxing some hypotheses, but always assuming that $f$ has relative dimension $1$  (\cite{VdB}).

 Bridgeland considered the $t$-structures on $D(Y)$ 
obtained
by   tilting the natural $t$-structure with respect to the $1$-tilting torsion classes
\[
\cT_{0}:= \{T\in \chY \; | \, \bR f_* T\simeq f^*T\} \quad \hbox{and}\quad
\cT_{-1}:= \{T\in \chY  \; | \, \eta_T: f^*f_*T\twoheadrightarrow T\}.
\]

He denoted by
${\;}^{0}{\rm Per}(Y/X)$ and
  ${\;}^{-1}{\rm Per}(Y/X)$  their respective hearts.
These categories form the first main ingredient  
in Bridgeland's and Van den Bergh's proofs of the Bondal-Orlov conjecture in the relative dimension 1 case.

In these proofs the use of $1$-tilting torsion classes  is dictated by the geometry of the problem since the fibers of the flops  have dimension $\leq 1$. 
In the case of relative dimension $n$
we define
\[
\cT_{0}:= \{T\in \chY \; | \, \bR f_* T\simeq f^*T\} \quad \hbox{and}\quad
\cT_{-1}:= \{T\in \cT_{0} \; | \, \eta_T: f^*f_*T\twoheadrightarrow T\} 
\]
as $n$ analogues of the previous  classes.
We prove tha,t for $n=2$, these are $2$-tilting torsion classes. We denote by 
${\;}^{i}{\rm Per}(Y/X))$ the respective hearts and we prove that
$$
 D(Y)\simeq D({\;}^{0}{\rm Per}(Y/X))\simeq D({\;}^{-1}{\rm Per}(Y/X)).
 $$

\medskip

{{\it Acknowledgements.}  We are grateful to  P. J{\o}rgensen who sent us  
useful comments and remarks on the first redaction of this work.
}

\section{$1$-tilting torsion classes  }

In what follows any full subcategory $\cC'$ of an additive category $\cC$ will be strictly full
(i.e., closed under isomorphisms) and additive and we will use the notation $\cC'\subseteq \cC$ to indicate such a subcategory. Any functor between additive categories will be an additive functor.

\numero{\bf Torsion pairs in abelian categories }\label{num:torpair}(\cite{Di}). 
A \emph{torsion pair} in an abelian category $\cA$ is a pair $(\cX,\cY)$ of full subcategories of 
$\cA$ such that:
\begin{enumerate}
\item[\rm (i)] $\cA(X,Y)=0$,  for every $X\in \cX$ and every $Y\in\cY$
(where $\cA(X,Y)$ denotes the group of morphisms from $X$ to $Y$ in $\cA$).
\item[\rm (ii)] For any object $C\in\cA$ there exists a short exact sequence 
$0 \to X \to C \to Y \to 0$ in $\cA$ with $X\in\cX$ and $Y\in\cY.$

\end{enumerate}
The class $\cX$ is called the \emph{torsion class} while $\cY$ is called the \emph{torsion-free class}.
The pervious conditions imply that the ``inclusion" functor $i_{\cX}:\cX\to \cA$ has a right adjoint $\tau$ while $i_{\cY}:\cY\to\cA$ has a
left adjoint $\phi$; moreover the endo-functors $t:=i_\cX\circ\tau$ and $f:=i_\cY\circ \phi$ are radicals.
The class
$\cX$ (resp. $\cY$) is closed under extensions, quotients (resp. subobjects) representable direct sums
(resp. direct products).
%
As observed in \cite[5.4]{BonVdBergh} both $\cX$ and $\cY$ admit kernels and cokernels
such that: $\Ker_{\cX}=\tau\circ \Ker _{\cA}$, $\Coker_{\cX}=\Coker _{\cA}$;
$\Ker_{\cY} = \Ker _{\cA}$ and $\Coker_{\cY}=\phi\circ \Coker _{\cA}$.

\begin{definition}\label{D:tilttorpair} (\cite{HRS})
A  torsion pair $(\cX,\cY)$ is called 
\emph{tilting} if $\cX$ \emph{cogenerates} $\cA$ (i.e., every object in $\cA$ is a subobject of an object in $\cX$)
and 
$\cX$ is called a \emph{$1$-tilting torsion class} (in $\cA$).
Dually  $(\cX,\cY)$ is \emph{cotilting} if $\cY$ \emph{generates} $\cA$
(i.e., every object in $\cA$ is a quotient of an
object in $\cY$) and $\cY$ is called a \emph{$1$-cotilting torsion-free class}.
\end{definition}
%


\begin{lemma}\label{L:eq1ttc}
Let $\cA$ be an abelian category.
The full subcategory $\cE\stackrel{i_{\cE}}\hookrightarrow \cA$ is a $1$-tilting torsion class if and only if
\begin{enumerate}
\item $\cE$ cogenerates $\cA$;
\item $\cE$ is closed under extensions in $\cA$;
\item  $\cE$ is closed under representable direct sums in $\cA$;
\item for any exact sequence  $0\to A\to X\to B\to 0$ in $\cA$ with $X\in\cE$ and $A,B\in\cA$ we have $B\in\cE$;
\item $\cE$ has kernels.
\end{enumerate}
\end{lemma}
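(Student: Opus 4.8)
The statement is a double implication, and the plan is to treat the two directions quite asymmetrically, since one is essentially a reading-off of known facts while the other carries all the content.

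For the forward implication, assuming $\cE=\cX$ is the torsion class of a tilting torsion pair $(\cX,\cY)$, I would simply collect: (1) is the very definition of \emph{tilting} in Definition~\ref{D:tilttorpair}; (2) is recorded in Remark~\ref{R:tt=1qa}, where $\Ker_\cX=\tau\circ\Ker_\cA$; (3) is the closure of a torsion class under extensions noted in \ref{num:torpair}; and (4) holds because $B$ is a quotient of $X\in\cX$ and torsion classes are closed under quotients (again \ref{num:torpair}). No real work is needed here.

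The substance is the converse. Assuming (1)--(4), I would produce an honest torsion pair with torsion class $\cE$ by setting $\cY:=\cE^{\perp}=\{Y\in\cA\mid \Hom_\cA(E,Y)=0 \text{ for all }E\in\cE\}$, so that condition (i) of \ref{num:torpair} holds by fiat, and then verifying the decomposition (ii). The first step is to observe that (1) and (4) together equip every $C\in\cA$ with an \emph{$\cE$-copresentation}: choose a monomorphism $\iota\colon C\hookrightarrow E_0$ with $E_0\in\cE$ by cogeneration, and note that $E_1:=\Coker_\cA\iota$ lies in $\cE$ by (4), so that $C=\Ker_\cA(g)$ for $g\colon E_0\to E_1$. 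Now (2) lets me form the kernel $t(C)$ of $g$ \emph{inside} $\cE$, together with its structural map $k\colon t(C)\to E_0$; since $gk=0$ this factors through $\iota$, giving $\bar k\colon t(C)\to C$.

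Two points then need care, and these are where I expect the real work to lie. First, $k$ is a priori only a monomorphism in $\cE$, and I must upgrade it to a monomorphism in $\cA$ so that $t(C)$ is a genuine subobject of $C$. The plan is to factor $k=j\circ q$ with $q$ epic and $j$ monic in $\cA$; then $\im_\cA k\in\cE$ by (4) and it is annihilated by $g$, so by the universal property of the $\cE$-kernel it factors back through $k$, and a short diagram chase produces $s$ with $sq=\id_{t(C)}$, forcing $q$ to be a split monomorphism and hence, being epic, an isomorphism; thus $k$, and therefore $\bar k$, is monic in $\cA$. Second, writing $C/t(C):=\Coker_\cA\bar k$, I must show $C/t(C)\in\cY$. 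Given $\varphi\colon E\to C/t(C)$ with $E\in\cE$, I would pull the epimorphism $C\twoheadrightarrow C/t(C)$ back along $\varphi$ to obtain $0\to t(C)\to P\to E\to 0$; closure under extensions (3) gives $P\in\cE$, the composite $P\to C\hookrightarrow E_0\xrightarrow{g}E_1$ vanishes, so $P\to E_0$ factors through the $\cE$-kernel $t(C)$, which confines the pullback map $P\to C$ to $t(C)$ and hence forces $\varphi=0$ because $P\twoheadrightarrow E$ is epic.

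This yields the short exact sequence $0\to t(C)\to C\to C/t(C)\to 0$ of (ii), so $(\cE,\cY)$ is a torsion pair. A final one-line check that any $Z\in{}^{\perp}\cY$ makes $Z\to Z/t(Z)$ vanish, whence $Z=t(Z)\in\cE$, identifies $\cE$ as the torsion class, and (1) makes it tilting. I expect the mono-upgrade of $k$ and the torsion-freeness of $C/t(C)$ to be the only genuinely delicate steps; everything else is formal.
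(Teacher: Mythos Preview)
Your proof is correct and follows essentially the same route as the paper: construct the torsion radical $t(C)$ as the $\cE$-kernel of a copresentation (yielding the right adjoint to $i$), use (4) to show the co-unit $t(C)\to C$ is a monomorphism in $\cA$, and use (3) via a pullback to show the quotient lies in $\cE^\perp$. You are in fact more explicit than the paper, which simply asserts that ``the fourth property implies'' the co-unit is monic without spelling out the image-factorisation and splitting argument you give; your version fills this gap correctly.
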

\begin{proof}
Any tilting torsion class 
satisfies these conditions.
On the other side let $\cE\stackrel{i_{\cE}}\hookrightarrow \cA$ be a full subcategory satisfying the previous conditions. 
Hence, by the first property,
we can co-present  any $A\in\cA$ as $A=\Ker_{\cA}f$ with $X_1\stackrel{f}\to X_2$ and $X_i\in\cE$ for $i=1,2$
and so, since the functor $\rMod\cE\ni\cA(i_{\cE}(\_), A)\cong\cE(\_ ,\Ker_{\cE}f)$,
we can define $\tau(A):=\Ker_{\cE}f$ (using the last property)
which gives a right adjoint for the functor $i_{\cE}$. 
We remark that the functoriality of this construction is guaranteed by the fact that if we change the
co-presentation of $A$ as $A=\Ker_{\cA}g$ with $Y_1\stackrel{g}\to Y_2$ and $Y_i\in\cE$ for $i=1,2$
there exists a unique isomorphism $\Ker_{\cE}f\stackrel{\phi}\simeq \Ker_{\cE}g$ such that the following triangle commutes:
$$\xymatrixcolsep{2em}
\xymatrixrowsep{1em}
\xymatrix{
\Ker_{\cE}f \ar[rr]^{\phi} \ar[rd]& & \Ker_{\cE}g \ar[ld] \\
& A & \\}
$$
The fourth property implies that for any $A\in\cA$ the co-unit of the adjunction
$\varepsilon_{A}:i_{\cE}\tau (A)\to A$ is a monomorphism.
So for any $A\in \cA$ we have a short exact sequence
$0\to i_{\cE}\tau(A)\stackrel{\varepsilon_A}\to A\to \Coker (\varepsilon_A)\to 0$.
Moreover
$\Coker_{\cA}(\varepsilon_A)\in \cE^\perp$ (see Appendix~\ref{N:oc} for the notion of orthogonal class)
since given any morphism $f:E\to \Coker_{\cA}(\varepsilon_A)$ with $E\in\cE$
its $\cA$ pull-back
$A\times_{\Coker_{\cA}(\varepsilon_A)}E$ belongs to $\cE$ (by the second property since it is an extension of $E$ by $i_{\cE}\tau(A)$),
 hence the pull-back morphism 
$f':A\times_{\Coker_{\cA}(\varepsilon_A)}E \to A$ factors (by adjunction) through  $i_{\cE}\tau(A)$
which implies that
 $f=0$.
\end{proof}

\begin{corollary}
Let $\cA$ be a well powered abelian category with arbitrary direct sums. The full subcategory $\cE\stackrel{i_{\cE}}\hookrightarrow \cA$ is a $1$-tilting torsion class if and only if
\begin{enumerate}
\item $\cE$ cogenerates $\cA$;
\item $\cE$ is closed under extensions in $\cA$;
\item  $\cE$ is closed under direct sums in $\cA$;
\item for any exact sequence  $0\to A\to X\to B\to 0$ in $\cA$ with $X\in\cE$ and $A,B\in\cA$ we have $B\in\cE$.
\end{enumerate}
\end{corollary}

We note that  the torsion pair $(\cA,0)$ in an abelian category $\cA$ is tilting while
$(0,\cA)$ is cotilting.
So the identity ${\rm id}_\cA:\cA\to\cA$ represents $\cA$ as a $1$-tilting torsion class
and also as a $1$-cotilting torsion-free class.

\smallskip

We will refer to Appendix~\ref{Ap2} for some generalities on $t$-structures.
In particular in order to assure that any category introduced in this work has Hom sets we 
will suppose in  the whole paper the following:

\numero{\bf Hypothesis HS}.
Given $\cE$
 a \emph{projectively complete} category (i.e., additive category such that any idempotent splits)
its derived
category $D(\cE):=D(\cE,{\cE}x_{\rm max})$ 
(endowed with its maximal Quillen exact structure see Appendix~\ref{QES}) has Hom sets.

In the following we will always suppose that $\cE$ is a projectively complete category.

\numero{\bf Happel-Reiten-Smal{\o} tilted $t$-structure.}\label{numero:HRS}
 \cite[Prop.~I.2.1, Prop.~I.3.2]{HRS} \cite[Prop.~2.5]{bridgeland2005t}.
Let $\cH_\cD$ be the heart of a non degenerate
$t$-structure $\cD=(\cDll0,\cDgg0)$ on a triangulated category $\cC$ and let 
$(\cX,\cY)$ be a torsion pair on $\cH_\cD$. Then the pair $\cT:=(\cTll0_{(\cX,\cY)},\cTgg0_{(\cX,\cY)})$ of full subcategories of $\cC$
\[
\begin{matrix}
\cT^{\leq 0}_{(\cX,\cY)}= & \{ C\in \cC\; | \; H_\cD^0(C)\in\cX,\; H_\cD^i(C)=0 \;  \forall i>0 \} \hfill \\
\cT^{\geq 0}_{(\cX,\cY)}=  &\{C\in \cC\; | \; H_\cD^{-1}(C)\in\cY,\; H_\cD^i(C)=0 \;  \forall i<-1 \} \\ \end{matrix}
\]
is a $t$-structure on $\cC$. 
Following \cite{bridgeland2005t} we say that $\cT$ is obtained 
\emph{by right tilting $\cD$ with respect to the torsion pair $(\cX,\cY)$} while
the $t$-structure 
${\overline{\cT}}:={\cT}[-1]$
is called the $t$-structure obtained
\emph{by left tilting $\cT$ with respect to the torsion pair $(\cX,\cY)$}. 
The right  
tilted heart is:
\[
{\mathcal H}_\cT
=  \{ C\in \cC\; | \; 
H_{\cD}^0(C)\in{\cX},\; 
H_{\cD}^{-1}(C)\in{\cY},\; H_{\cD}^i(C)=0 \;  \forall i\notin \{-1,0\} \} .
\]
In this paper we simply call tilting the right one.
In \cite[Lem. 1.1.2]{Pol} Polishchuk proved that given any pair of $t$-structures
$(\cD,\cT)$ on a triangulated category $\cC$ such that
$\cD^{\leq -1}\subseteq \cT^{\leq 0}\subseteq \cD^{\leq 0}$, the $t$-structure
$\cT$ is obtained by right tilting $\cD$ with respect to the torsion pair 
$(\cX:=\cH_\cT\cap\cH_\cD,\cH_\cT[-1]\cap\cH_\cD=:\cY)$ while
$\cD$ is obtained by left tilting $\cT$ with respect to the tilted torsion pair
$(\cY[1]=\cH_\cD[1]\cap\cH_\cT,\cH_\cD\cap\cH_\cT=:\cX)$.

\numero{\bf Notation}. \label{N:tf}
In this paper whenever we have a pair of $t$-structures $(\cD, \cT)$ on a triangulated 
category $\cC$ we will denote by $\delta^{\leq 0}$ the truncation functor with respect to
$\cD$ and by $\tau^{\leq 0}$ the one with respect to $\cT$.

\begin{theorem}{\bf 1-Tilting Theorem.} (\cite[Th.~I.3.3]{HRS}, \cite{Chen}).  \label{Th:1--tilt}
Given a tilting  torsion pair $(\cE,\cY)$ in $\cA$ 
there exists a triangle equivalence 
$D(\cH_{\cT})\stackrel{\simeq}\to D(\cA)$
(where $\cH_{\cT}$ is the heart of the $t$-structure obtained
by right tilting the natural $t$-structure 
 with respect to the torsion pair $(\cE,\cY)$)
which is compatible with the natural inclusion 
$\cH_\cT\subseteq D(\cA)$.
Moreover $(\cY[1],\cE)$ is a cotilting torsion pair in $\cH_\cT$.
\end{theorem}
%

\begin{definition}\label{D:1tpt}
A pair of $t$-structures
$(\cD,\cT)$ on a triangulated category $\cC$ is called \emph{$1$-tilting} if
the following two conditions  hold:
\begin{enumerate}
\item $\cD^{\leq -1}\subseteq \cT^{\leq 0}\subseteq \cD^{\leq 0}$;
\item denoting by $\cE:=\cH_\cD\cap\cH_\cT$,
the following equivalent conditions are satisfied:
\begin{description}
\item[(i)] 
$\cC\simeq  K(\cE)/\cN$ and
$D(\cH_\cD)\stackrel{\simeq}\hookleftarrow K(\cE)/\cN
\stackrel{\simeq}\hookrightarrow D(\cH_\cT)$
where $\cN$ is the null system of complexes in 
$K(\cE)$ acyclic in $\cH_{\cD}$
 or equivalently in
$\cH_\cT$;
\item[(ii)]  $\cC\simeq D(\cH_\cD)$ and $\cE$ cogenerates $\cH_\cD$;
\item[(iii)] $\cC\simeq D(\cH_\cT)$ and $\cE$ generates $\cH_\cT$.
\end{description}
\end{enumerate}
\end{definition}

\begin{proposition}\label{P:1t}
The pair $(\cD,\cT)$ is a $1$-tilting pair of $t$-structures if and only if 
$\cE:=\cH_\cD\cap\cH_\cT$ is a $1$-tilting torsion class (resp. $1$-tilting torsion-free class) 
in $\cH_\cD$ (resp. in $\cH_\cT$).
\end{proposition}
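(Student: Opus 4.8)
The plan is to read the equivalence off from the results collected in~\ref{numero:HRS} (Polishchuk's lemma) and from the $1$-tilting Theorem~\ref{Th:1--tilt}, after observing that the inclusion condition (1) of Definition~\ref{D:1tpt} is exactly what turns $\cE$ into a torsion class, so that the only genuine content is the (co)generation condition. Throughout I work under the standing hypotheses that $\cD^{\leq -1}\subseteq\cT^{\leq 0}\subseteq \cD^{\leq 0}$ and that $\cC\cong D(\cH_\cD)$. Under these, Polishchuk's lemma (stated in~\ref{numero:HRS}) shows that $(\cE,\cY)$ with $\cY:=\cH_\cT[-1]\cap\cH_\cD$ is a torsion pair in $\cH_\cD$ whose Happel--Reiten--Smal{\o} tilt is $\cT$, and dually that $(\cH_\cD[1]\cap\cH_\cT,\cE)$ is a torsion pair in $\cH_\cT$ whose tilt is $\cD$. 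In particular $\cE$ is automatically a torsion class in $\cH_\cD$ and a torsion-free class in $\cH_\cT$, so in both directions what must be controlled is only whether $\cE$ cogenerates $\cH_\cD$, equivalently generates $\cH_\cT$.

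For the forward implication I would assume that $(\cD,\cT)$ is $1$-tilting. Then its equivalent condition (2)(ii) says that $\cE$ cogenerates $\cH_\cD$; combined with the torsion pair $(\cE,\cY)$ produced above, Definition~\ref{D:tilttorpair} says precisely that $\cE$ is a tilting torsion class in $\cH_\cD$. Dually, condition (2)(iii) asserts that $\cE$ generates $\cH_\cT$, which by Definition~\ref{D:tilttorpair} means that $(\cH_\cD[1]\cap\cH_\cT,\cE)$ is cotilting, i.e.\ that $\cE$ is a (co)tilting torsion-free class in $\cH_\cT$.

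For the converse I would suppose that $\cE$ is a tilting torsion class in $\cH_\cD$. Since a torsion pair is determined by its torsion class, the tilting torsion pair $(\cE,\cE^\perp)$ coincides with Polishchuk's $(\cE,\cY)$, so $\cT$ is exactly its tilt and no compatibility with $\cT$ is lost. Applying the $1$-tilting Theorem~\ref{Th:1--tilt} with $\cA:=\cH_\cD$ then gives the triangle equivalences $\cC\cong D(\cH_\cD)\cong D(\cH_\cT)\cong K(\cE)/\cN$, which is condition (2)(i), and it further asserts that $(\cY[1],\cE)$ is a cotilting torsion pair in $\cH_\cT$, i.e.\ that $\cE$ generates $\cH_\cT$, which is condition (2)(iii); condition (2)(ii) holds by the cogeneration hypothesis together with $\cC\cong D(\cH_\cD)$. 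Thus all three formulations of (2) hold and $(\cD,\cT)$ is $1$-tilting. The statement with $\cH_\cT$ replacing $\cH_\cD$ follows by the same argument applied to the dual (cotilting) form of Theorem~\ref{Th:1--tilt}, exchanging the roles of the two $t$-structures through Polishchuk's lemma.

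The main obstacle is neither bookkeeping direction above but the genuine input supplied by Theorem~\ref{Th:1--tilt}: the nontrivial assertion that the tilt of a tilting torsion pair is a cotilting torsion pair. This is precisely what certifies the passage between ``$\cE$ cogenerates $\cH_\cD$'' and ``$\cE$ generates $\cH_\cT$'', and hence the internal equivalence of the three conditions (2)(i)--(iii) in Definition~\ref{D:1tpt}; once that is granted, the remainder is only a matching of the terminology of Definition~\ref{D:tilttorpair} with Polishchuk's description of the torsion pair cut out on each heart.
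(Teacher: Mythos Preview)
Your proof is correct and follows essentially the same route as the paper: Polishchuk's lemma supplies the torsion-pair structure on each heart, and Theorem~\ref{Th:1--tilt} handles the converse direction. There is, however, one substantive difference worth noting. In the forward implication you simply invoke condition (2)(ii) of Definition~\ref{D:1tpt} to obtain cogeneration. The paper instead starts from condition (2)(i), namely $\cC\cong D(\cH_\cD)\cong K(\cE)/\cN$, and \emph{derives} cogeneration by a concrete argument: any $A\in\cH_\cD$ is represented by a complex $E^\point\in K(\cE)$, whence $A\hookrightarrow \Coker_{\cH_\cD}(d^{-1}_{E^\point})\in\cE$. This is precisely the implication (i)$\Rightarrow$(ii), and the Tilting Theorem alone does not supply it (it gives (ii)$\Rightarrow$(i),(iii) and dually (iii)$\Rightarrow$(i),(ii), but not the direction starting from (i)). So while you correctly identify that the equivalence of (i)--(iii) is the crux, your proof treats this equivalence as already established by Definition~\ref{D:1tpt}, whereas the paper's proof of the proposition is in fact where the missing implication (i)$\Rightarrow$(ii) gets justified. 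If you want your argument to be self-contained, you should include this short complex-representation step; otherwise your proof is fine but relies on a claim stated in the definition without prior proof.
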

\begin{proof}
One implication is a consequence of the $1$-Tilting Theorem~\ref{Th:1--tilt}: if
$\cE:=\cH_\cD\cap\cH_\cT$ is a $1$-tilting torsion class (resp. $1$-tilting torsion-free class) 
in $\cH_\cD$ (resp. in $\cH_\cT$) we obtain that  
$(\cD,\cT)$ is a $1$-tilting pair of $t$-structures.
On the other side if $(\cD,\cT)$ is a $1$-tilting pair of $t$-structures by 
\cite[Lem.~1.1.2]{Pol} $\cE:=\cH_\cD\cap\cH_\cT$ is a torsion class in $\cH_\cD$ so
we have only to prove that
$\cE$ cogenerates $\cH_\cD$.
By hypothesis $K(\cE)/\cN\stackrel{\simeq}\hookrightarrow D(\cH_\cD)$ 
so any $A\in \cH_\cD$
can be represented by a complex $E^\point\in K(\cE)$
, hence $A\hookrightarrow \Coker_{\cH_\cD}(d^{-1}_{E^\point})\in \cE$ 
(and $ \Coker_{\cH_\cD}(d^{-1}_{E^\point})\in \cE$ since it is a quotient of a 
torsion object in $\cH_\cD$).
Dually if  $K(\cE)/\cN\stackrel{\simeq}\hookrightarrow D(\cH_\cT)$  we have that $\cE$ generates $\cH_\cT$
and it is a torsion-free class in $\cH_\cT$. 
\end{proof}
%
\begin{definition}(\cite{Schn}).\label{N4.1}
An additive category $\cE$ is called \emph{$1$-quasi-abelian} if it admits
kernels and cokernels, and 
any push-out of a kernel
is a kernel, and any pullback of a cokernel 
 is a cokernel. 
A  zero sequence 
$0\to E\stackrel{u}\to F\stackrel{v}\to G\to 0$
 is called \emph{exact} if and only if $(E,u)$ is the kernel of $v$ and
$(G,v)$ is the cokernel of $u$.
A complex $X^\point$ with entries in $\cE$ is called
\emph{acyclic} if each differential $d^n:X^n\to X^{n+1}$ decomposes in $\cE$ as
$d^n=m_n\circ e_n:
\xymatrix@-10pt{X^n\ar@{->>}[r]^{e_n} & D^n\ar@{>->}[r]^{m_n} & X^{n+1}}$
where $m_n$ is the kernel of $e_{n+1}$, and $e_{n+1}$ is the cokernel of
$m_{n}$  for any $n\in\Bbb Z$.
\end{definition}
\begin{remark}
The class of kernel-cokernel exact sequences provides the 
maximal Quillen exact
structure on $\cE$ if and only if $\cE$ is $1$-quasi-abelian (see Appendix~\ref{QES}
for the notion of maximal Quillen exact structure). 
\end{remark}
\numero{\bf Left and Right $t$-structures on the derived category of a quasi-abelian category}
\label{num:lefttstr} 
(\cite[\S 1.2]{Schn}).
Let $\cL\cK_\cE^{\leq 0}$ (resp. $\cR\cK_\cE^{\geq 0}$) denote the full subcategory of $K(\cE)$
formed by complexes which are 
isomorphic in $K(\cE)$ to complexes whose entries in each strictly positive 
(resp. strictly negative) degree are zero.
Let now suppose that $\cE$ admits kernels and cokernels, 
hence the pairs 
$\cL\cK_\cE:=(\cL\cK_\cE^{\leq 0},(\cL\cK_\cE^{\leq -1})^{\bot})$ 
and
$\cR\cK_\cE:=({}^{\bot}(\cR\cK_\cE^{\geq 1}),\cR\cK_\cE^{\geq 0})$
define two $t$-structures on $K(\cE)$ whose truncation functors are resp.:
{\small{\[ \begin{matrix} \hfill \tau_\cL^{\leq 0} E^\point := 
\cdots \longrightarrow & E^{-2} &\longrightarrow & E^{-1} &\longrightarrow 
&\stackrel{\point}{\Ker_{\cE}d^0}& \longrightarrow & 0  \longrightarrow 
\cdots \\  
\hfill  \tau_\cL^{\geq 1} E^\point := 
\cdots \longrightarrow & 0 &\longrightarrow & \Ker_{\cE}d^0 & \longrightarrow & 
\stackrel{\point}{E^0}& \longrightarrow  & E^{1} \longrightarrow \cdots \\
 \hfill\tau_\cR^{\leq -1} E^\point := 
\cdots\longrightarrow &E^{-1}&\longrightarrow 
& \stackrel{\point}{E^0}&\longrightarrow &\Coker_{\cE}d^{-1}&\longrightarrow & 
0 \longrightarrow  \cdots
 \\  
 \hfill \tau_\cR^{\geq 0} E^\point := 
\cdots\longrightarrow & 0 &\longrightarrow & 
\stackrel{\point}\Coker_{\cE} d^{-1}&\longrightarrow  & E^{1}&\longrightarrow 
& E^{2}\longrightarrow \cdots \\
\end{matrix}
\]}}(as  in  \ref{N:point} we use a point to indicate the object placed in degree 0).
The left $t$-structure $\cL\cK_\cE$ is the one considered by Schneiders in
\cite[Prop.~1.2.4]{Schn}. 
We will denote by $\cL\cK(\cE)$ (resp. $\cR\cK(\cE)$) the heart associated to 
the $t$-structure $\cL\cK_\cE$ (resp. $\cR\cK_\cE$). 
We have $\cE \simeq \cL\cK_\cE^{\leq 0}\cap \cR\cK_\cE^{\geq 0}=\cL\cK(\cE)\cap\cR\cK(\cE)$ 
 in $K(\cE$) and moreover 
$\cR\cK_\cE^{\leq -2}\subseteq\cL\cK_\cE^{\leq 0}\subseteq \cR\cK_\cE^{\leq 0}$
(since for any $E^\point\in K(\cE)$ its
$\tau_{\cR}^{\leq -2}(E^\point)\in \cL\cK_\cE^{\leq 0}$).
In $K(\cE)$ we have that $\cR\cK_\cE^{\leq -1}$ is contained in $\cL\cK_\cE^{\leq 0}$
if and only if any cokernel map is a split epimorphism or equivalently
any kernel map is a split monomorphism. 
If this is not the case in order to reduce the ``gap" (\cite[Definition 2.1]{FMT})
between the left and the right $t$-structures
(without changing the intersection $\cE$)
we can try to localize by a null system formed by
acyclic complexes with respect to a Quillen exact structure. 
In this case, if the previous  $t$-structures satisfy the conditions of Lemma~\ref{Prop:Schn},
they will induce a pair of $t$-structures $(\cR\cD_{(\cE,{\cE}x)},\cL\cD_{(\cE,{\cE}x)})$
on the localized category $D(\cE,{\cE}x)$.
In order to obtain 
$\cR\cD_{(\cE,{\cE}x)}^{\leq -1}\subseteq\cL\cD_{(\cE,{\cE}x)}^{\leq 0}\subseteq 
\cR\cD_{(\cE,{\cE}x)}^{\leq 0}$
we need to prove that 
for any $E^\point \in D(\cE,{\cE}x)$ the canonical morphism of complexes
$\alpha_{E^\point}:\tau_{\cL}^{\leq 0}( \tau_{\cR}^{\leq -1} E^\point )\to  
\tau_{\cR}^{\leq -1} E^\point $
is an isomorphism in $D(\cE,{\cE}x)$:
{\small{\[\xymatrixcolsep{1.6em}\xymatrixrowsep{0.8em}
\xymatrix@-4pt{
\tau_{\cL}^{\leq 0}(\tau_\cR^{\leq -1} E^\point) := 
\cdots\ar[r] \ar[d]_{ \alpha_{E^\point}}& E^{-1}\ar[r]\ar[d]  &
\mathop{\im_{\cE}(d^{-1})}\limits^{\point}_{\;}\ar[r] \ar[d] & 0\ar[r] \ar[d] & 0\ar[r]\ar[d] &
\cdots \\  
\tau_\cR^{\leq -1} E^\point := 
\cdots\ar[r] &E^{-1}\ar[r]^{d^{-1}} &
\mathop{E^0}\limits^{\point}_{\;}\ar[r] & \Coker_{\cE}(d^{-1}) \ar[r] & 0 \ar[r] & 
\cdots
 \\   }\]}}which is equivalent to require the acyclicity  of the mapping cone
$M(\alpha_{E^\point})$ (which is homotopically isomorphic to $\hbox{Ex}(d^0)$):
{\small{\[\xymatrixcolsep{0.7em}\xymatrixrowsep{0.8 em}
\xymatrix{
M(\alpha_{E^\point}):= 
\cdots\ar[r] \ar[d]^{\cong} &  E^{-2}\oplus E^{-1}\ar[r]\ar[d] &
E^{-1}\oplus \im_{\cE}(d^{-1})\ar[r] \ar[d]& \mathop{E^0}\limits^{\point}_{\;}\ar[r] \ar[d]&
 \Coker_{\cE}(d^{-1}) \ar[r] \ar[d]& 0 \ar[r] \ar[d]& 
\cdots
 \\  
\hbox{Ex}(d^0):= \cdots\ar[r] &0\ar[r] & \im_{\cE}(d^{-1})\ar[r]  & \mathop{E^0}\limits^{\point}_{\;}\ar[r] 
& \Coker_{\cE}(d^{-1}) \ar[r] & 0 \ar[r] &\cdots \\
}\]}}
Hence we would like to use a null system containing 
the complexes $\hbox{Ex}(d^0)$ for any $d^0:E^0\to E^1$
which is possible if and only if these short exact sequences
satisfy the axioms of a Quillen exact structure.
Therefore if
$\cE$ is a $1$-quasi-abelian category 
the previous truncation functors induce, by
 \cite[Lem.~1.2.17; 1.18]{Schn} (see
Proposition~\ref{Prop:Schn} and Lemma~\ref{L:toQES}), 
the $t$-structure $\cL\cD_\cE$ (resp. $\cR\cD_\cE$)
in the derived category
$D(\cE)=K(\cE)/\cN$. Moreover since 
$0\to \im_{\cE}(d^{-1})\to E^0 \to \Coker_{\cE}(d^{-1})\to 0$
 is a kernel-cokernel exact sequence it
 is exact for the
maximal Quillen exact structure on $\cE$
we deduce  
$\cR\cD_\cE^{\leq -1}\subseteq\cL\cD_\cE^{\leq 0}\subseteq \cR\cD_\cE^{\leq 0}$
and $\cE=\cL\cD_\cE^{\leq 0}\cap\cR\cD_\cE^{\geq 0}$.
The $t$-structure $\cL\cD_\cE$ (resp. $\cR\cD_\cE$) is 
called the \emph{left $t$-structure} 
(resp. the \emph{right $t$-structure}), 
whose aisle $\cL\cD_\cE^{\leq 0}$ (resp. co-aisle $\cR\cD_\cE^{\geq 0}$)
is the class of complexes isomorphic
in $D(\cE)$ to complexes whose entries in each strictly positive (resp. negative)
degree are zero. 
The heart of $\cL\cD_\cE$  (resp. $\cR\cD_\cE$)
is denoted by $\cL\cH(\cE)$ (resp.  $\cR\cH(\cE)$) and we denote by $I_\cL$  
(resp. $I_\cR$) the canonical embedding into
 $\cL\cH(\cE)$ (resp. $\cR\cH(\cE)$)
 \[
 \begin{matrix}
I_\cL:& \cE& \longrightarrow &\cL\cH(\cE) \qquad\\
 & E& \longmapsto & \begin{smallmatrix} 0\to & \stackrel{\point}E \qquad\\
 \end{smallmatrix}
 \end{matrix}
 \qquad
 \begin{matrix}
I_\cR:& \cE& \longrightarrow &\cR\cH(\cE) \qquad\\
 & E& \longmapsto & \begin{smallmatrix}  \stackrel{\point}E\to & 0 \qquad\\
 \end{smallmatrix}
 \end{matrix}
 \]
which preserves and reflects exact sequences. Moreover $\cE$ is stable under
extensions in $\cL\cH(\cE)$ (resp. $\cR\cH(\cE)$).

\begin{proposition}
Let $\cE$ be a $1$-quasi-abelian category.
The $t$-structures $\cL\cD_{\cE}=\cR\cD_{\cE}$ coincide if and only if
$\cE$ is an abelian category.
\end{proposition}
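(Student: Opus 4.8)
The plan is to reduce the statement to two facts recorded in \ref{num:lefttstr}: the chain of inclusions $\cR\cD_\cE^{\leq -1}\subseteq\cL\cD_\cE^{\leq 0}\subseteq \cR\cD_\cE^{\leq 0}$, and the identification $\cE=\cL\cD_\cE^{\leq 0}\cap\cR\cD_\cE^{\geq 0}$ of $\cE$ (embedded in $D(\cE)$ as complexes concentrated in degree $0$) with that intersection. Since a $t$-structure is determined by its aisle and one inclusion of aisles already holds, the equality $\cL\cD_\cE=\cR\cD_\cE$ is equivalent to the single reverse inclusion $\cR\cD_\cE^{\leq 0}\subseteq\cL\cD_\cE^{\leq 0}$, hence to $\cL\cD_\cE^{\leq 0}=\cR\cD_\cE^{\leq 0}$ and, passing to co-aisles, to $\cL\cD_\cE^{\geq 0}=\cR\cD_\cE^{\geq 0}$.

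I would treat the implication ``coincide $\Rightarrow$ abelian'' formally. Writing $\cD$ for the common $t$-structure, one has $\cL\cD_\cE^{\leq 0}=\cD^{\leq 0}$ and $\cR\cD_\cE^{\geq 0}=\cD^{\geq 0}$, so the stated identity becomes
\[
\cE=\cL\cD_\cE^{\leq 0}\cap\cR\cD_\cE^{\geq 0}=\cD^{\leq 0}\cap\cD^{\geq 0}=\cH_\cD,
\]
i.e.\ $\cE$ coincides, inside $D(\cE)$, with the heart of $\cD$. By \cite{BBD} the heart of a $t$-structure is abelian; since the embedding $\cE\hookrightarrow D(\cE)$ is fully faithful and preserves and reflects exact sequences, it transports this structure back, so $\cE$ is abelian.

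For the converse I would unwind the construction of $D(\cE)$ when $\cE$ is abelian. Then every kernel--cokernel short exact sequence is an ordinary short exact sequence, so the maximal Quillen exact structure on $\cE$ is the abelian one, the null system $\cN$ is the class of complexes with vanishing cohomology, and $D(\cE)=K(\cE)/\cN$ is the usual derived category. I would then read off the aisle and co-aisle directly: $\cL\cD_\cE^{\leq 0}$, the complexes isomorphic in $D(\cE)$ to complexes concentrated in non-positive degrees, is for an abelian category exactly $\{C\mid H^i(C)=0\ \text{for}\ i>0\}$, the standard aisle; dually $\cR\cD_\cE^{\geq 0}=\{C\mid H^i(C)=0\ \text{for}\ i<0\}$ is the standard co-aisle. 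As a $t$-structure is determined by its aisle (resp.\ co-aisle), both $\cL\cD_\cE$ and $\cR\cD_\cE$ equal the standard $t$-structure on $D(\cE)$, and in particular coincide. Equivalently, one checks that the truncation functors $\tau_\cL^{\leq 0}$ and $\tau_\cR^{\geq 0}$ of \ref{num:lefttstr} reduce, for abelian $\cE$, to the canonical truncations $\tau^{\leq 0}$ and $\tau^{\geq 0}$.

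The only real content is in the converse, and the step I expect to need the most care is the translation, after localization, of ``isomorphic in $D(\cE)$ to a complex concentrated in degrees $\leq 0$'' into the cohomological condition $H^i=0$ for $i>0$ (and dually for the co-aisle). This hinges on knowing that, for the abelian exact structure, the acyclic complexes are precisely those with zero cohomology. The forward direction is purely formal once one invokes that hearts of $t$-structures are abelian.
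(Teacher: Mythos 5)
Your proof is correct, but for the converse it takes a genuinely different route from the paper's. The forward implication is the same in both: when the two $t$-structures coincide, the identity $\cE=\cL\cD_\cE^{\leq 0}\cap\cR\cD_\cE^{\geq 0}$ recorded in \ref{num:lefttstr} exhibits $\cE$ as the heart of the common $t$-structure, which is abelian by \cite{BBD}. For the converse, the paper argues at the level of truncation functors: for each complex $E^\point$ it forms the canonical comparison map $\beta_{E^\point}:\tau_\cL^{\leq 0}E^\point\to\tau_\cR^{\leq 0}E^\point\cong\tau_\cL^{\leq 1}\tau_\cR^{\leq 0}E^\point$ and observes that its mapping cone is acyclic precisely when the kernel--image sequence $0\to\Ker_{\cE}d^0\to E^0\to\im_{\cE}(d^0)\to 0$ is a conflation for the maximal exact structure, which holds exactly when $\cE$ is abelian; so the left and right truncations, hence the aisles, agree. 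You instead identify each of $\cL\cD_\cE$ and $\cR\cD_\cE$ with the natural $t$-structure of Definition~\ref{Dnat}: since for an abelian category the maximal exact structure is the class of all short exact sequences and the null system $\cN$ is the class of complexes with vanishing cohomology, the aisle (resp.\ co-aisle) descriptions of \ref{num:lefttstr} become the standard cohomological conditions $H^{i}=0$ for $i>0$ (resp.\ $i<0$), and a $t$-structure is determined by its aisle (resp.\ co-aisle) \cite{KeVo}. Both arguments are sound and hinge on the same underlying fact. The paper's computation stays inside the formalism already set up in \ref{num:lefttstr} and isolates, complex by complex, the single exactness statement where abelian-ness enters; your version is more conceptual but outsources the work to classical facts about derived categories of abelian categories (quasi-isomorphism invariance of cohomology and the standard truncation), and its one delicate point is exactly the one you flag: checking that acyclicity for the maximal exact structure on an abelian category coincides with vanishing of cohomology.
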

\begin{proof}
Let $\cE$  be a $1$-quasi-abelian category,
$\cL\cD_{\cE}=\cR\cD_{\cE}$ if and only if
for any complex
$E^\point\in D(\cE)$ 
the canonical map
$\beta_{E^\point}:\tau_\cL^{\leq 0}E^\point\to \tau_\cR^{\leq 0} E^\point\cong 
\tau_\cL^{\leq 1}\tau_\cR^{\leq 0} E^\point$
{\small{\[\xymatrixcolsep{1.6em}\xymatrixrowsep{0.8em}
\xymatrix{
\tau_\cL^{\leq 0} E^\point :=
\cdots \ar[r]\ar[d]_{\beta_{E^\point}} &  E^{-1} \ar[r]\ar[d] 
&\mathop{\Ker_{\cE}d^0}\limits^{\point}_{\;} \ar[r]\ar[d] & 0 \ar[r]\ar[d]&0 \ar[r]\ar[d] &
\cdots \\ 
 \tau_\cL^{\leq 1}\tau_\cR^{\leq 0} E^\point=
\cdots\ar[r] &   E^{-1}\ar[r]^{d^{-1}} &\mathop{E^0}\limits^{\point}_{\;}
\ar[r]& \im_{\cE}(d^0)\ar[r] & 0\ar[r] & \cdots
\\ }
\]}}is an isomorphism in $D(\cE)$ which holds true if and only if
the short sequence
$0\to \Ker_{\cE}d^0 \to E^0\to \im_{\cE}(d^0)\to 0$ is exact 
on $\cE$ i.e.;
$\cE$ is an abelian category.
\end{proof}

%
\begin{theorem}\label{Th:1tilt}\cite{Rual}, \cite[Prop. B.3]{BonVdBergh}.
Let $\cE$ be an additive category. The following properties are equivalent:
\begin{enumerate}
\item $\cE$ is a $1$-cotilting torsion-free class in an abelian category $\cA$;
\item $\cE$ is a $1$-tilting torsion class in an abelian category $\cA'$;
\item $\cE$  is a $1$-quasi-abelian category;
\item $\cE$ is the intersection of the hearts $\cH_\cD\cap\cH_\cT$
of a $1$-tilting pair of $t$-structures.
\end{enumerate}
Moreover $\cA\simeq \cL\cH(\cE)$, $\cA'\simeq\cR\cH(\cE)$ and $(\cD,\cT)=(\cR\cD_\cE,\cL\cD_\cE)$.
\end{theorem}
%
\begin{proof}
The equivalence between 
(1), (2) and (4)
is a consequence of  Theorem~\ref{Th:1--tilt} and
Proposition~\ref{P:1t}.
Given $\cE$ a $1$-quasi-abelian category as recovered in
\ref{num:lefttstr}
Schneiders proved that $(\cR\cD_\cE,\cL\cD_\cE)$ is a $1$-tilting
pair of $t$-structures with $\cL\cH(\cE)\cap\cR\cH(\cE)\simeq \cE$, so (3) implies (4).
On the other direction given any $1$-tilting pair of $t$-structures $(\cD,\cT)$
by Proposition~\ref{P:1t} 
the class $\cE:=\cH_\cT\cap\cH_\cD$ is a tilting torsion class in $\cH_\cD$, 
hence a $1$-quasi-abelian category and thus (4) implies (3).
\end{proof}

We have seen in \ref{num:torpair} that given any torsion pair
 $(\cX,\cY)$ in an abelian category $\cA$ both $\cX$ and $\cY$
 are $1$-quasi-abelian categories, so in particular $\cX$ is a $1$-tilting torsion class
 after a suitable replacement of the abelian category:

\begin{proposition}\label{P:t=tt}
Let $(\cX,\cY)$ be any torsion pair in an abelian category $\cA$.
Let consider $\cA_\cX$ to be the full subcategory of $\cA$ whose
objects are cogenerated by $\cX$. Then $\cA_\cX$ is abelian, the canonical
embedding functor $\cA_\cX\hookrightarrow \cA$ is exact and the pair $(\cX, \cY\cap\cA_\cX)$ is a $1$-tilting 
torsion pair in $\cA_\cX$ therefore $\cA_\cX\simeq \cR\cH(\cX)$.\\
Dually let  consider $\cA_\cY$ to be the full subcategory of $\cA$ whose
objects are generated by $\cY$. Then $\cA_\cY$ is abelian, the
functor $\cA_\cY\hookrightarrow \cA$ is exact and the pair $(\cX\cap\cA_\cY, \cY)$ 
is a $1$-cotilting torsion pair
in $\cA_\cY$ therefore $\cA_\cY\simeq\cL\cH(\cY)$.
\end{proposition}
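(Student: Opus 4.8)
The plan is to reduce the whole statement to two closure properties of $\cA_\cX$ inside $\cA$: closure under subobjects and closure under quotients. The first is immediate, since a monomorphism $C\hookrightarrow X$ with $X\in\cX$ restricts along any subobject $C'\hookrightarrow C$ to a monomorphism $C'\hookrightarrow X$. The second is the only genuinely torsion-theoretic point, and I expect it to be the crux of the argument. Given $C\in\cA_\cX$ with a fixed monomorphism $\iota\colon C\hookrightarrow X$, $X\in\cX$, and a quotient $C\twoheadrightarrow Q$ with kernel $K$, I would view $K\subseteq C\subseteq X$ through $\iota$ and form $X/K$. The induced map $C/K\to X/K$ has kernel $C\cap K=K$, hence is a monomorphism, while $X/K$ is a quotient of $X\in\cX$ and therefore lies in $\cX$ because $\cX$ is closed under quotients (\ref{num:torpair}). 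Thus $Q\cong C/K$ embeds into $X/K\in\cX$, so $Q\in\cA_\cX$.

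Once these two closures are in hand, everything is formal. Closure under subobjects and quotients means $\cA_\cX$ is closed under $\cA$-kernels and $\cA$-cokernels; since finite direct sums also stay inside $\cA_\cX$ (from $C\oplus C'\hookrightarrow X\oplus X'$ with $X\oplus X'\in\cX$), I can invoke the standard fact that a full additive subcategory of an abelian category closed under the ambient kernels and cokernels is itself abelian with exact inclusion: the $\cA$-(co)kernels satisfy the universal properties inside $\cA_\cX$, and the canonical $\coim\to\im$ isomorphism transfers because the subcategory is full. This settles the first two assertions.

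For the torsion pair $(\cX,\cY\cap\cA_\cX)$ in $\cA_\cX$, the inclusion $\cX\subseteq\cA_\cX$ is trivial, the orthogonality $\Hom_{\cA_\cX}(X,Y)=\Hom_\cA(X,Y)=0$ is inherited from $\cA$, and for $C\in\cA_\cX$ the canonical sequence $0\to t(C)\to C\to f(C)\to 0$ of \ref{num:torpair} already lives in $\cA_\cX$: its subobject $t(C)\in\cX$ lies in $\cA_\cX$, and its quotient $f(C)\in\cY$ lies in $\cA_\cX$ by the quotient-closure just proved, so $f(C)\in\cY\cap\cA_\cX$; the sequence is short exact in $\cA_\cX$ because the inclusion is exact. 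Since $\cA_\cX$ is by its very definition cogenerated by $\cX$, this torsion pair is tilting (one may alternatively verify the four conditions of Lemma~\ref{L:eq1ttc} directly, conditions (2)--(4) following from $\cX$ being $1$-quasi-abelian, closed under extensions, and closed under quotients).

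Finally I would feed this into the correspondence of Theorem~\ref{Th:1tilt}: a tilting torsion class $\cX$ in the abelian category $\cA_\cX$ identifies $\cA_\cX$ with $\cR\cH(\cX)$. The dual statement for $\cA_\cY$ is obtained by the same reasoning read in the opposite category --- closure under quotients is now immediate, and closure under subobjects is proved by pulling the surjection $Y\twoheadrightarrow C$ back along a subobject (using that $\cY$ is closed under subobjects), the relevant half of the canonical sequence now being the torsion-free part $\cY$, which generates $\cA_\cY$ --- giving $\cA_\cY\cong\cL\cH(\cY)$.
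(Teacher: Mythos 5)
Your proof is correct and takes essentially the same route as the paper's: both establish that $\cA_\cX$ is closed under subobjects and under quotients --- your embedding $C/K\hookrightarrow X/K$ into a quotient of an object of $\cX$ is exactly the paper's step $\Coker_\cA(f)\hookrightarrow\Coker_\cA(\alpha_Y f)\in\cX$ --- then deduce that $\cA_\cX$ is abelian with exact inclusion, restrict the canonical torsion sequence to get the torsion pair $(\cX,\cY\cap\cA_\cX)$, note that tilting holds by the very definition of $\cA_\cX$, and conclude $\cA_\cX\cong\cR\cH(\cX)$ from the established correspondence. The dual argument for $\cA_\cY$ matches as well.
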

%
\begin{proof}
Let us  prove that for any $X\stackrel{f}\to Y$ morphism in $\cA_\cX$,
its kernel and cokernel  in $\cA$ belong to $\cA_\cX$. 
By definition of $\cA_\cX$ there exist 
$X\stackrel{\alpha_X}\hookrightarrow T_X$ and 
$Y\stackrel{\alpha_Y}\hookrightarrow T_Y$ 
with $T_X,T_Y$ in $\cX$. 
Hence
$ \Ker_\cA(f)\hookrightarrow X\stackrel{\alpha_X}\hookrightarrow T_X$ implies 
$ \Ker_\cA(f)\in \cA_\cX$ while 
$ \Coker_\cA(f)\hookrightarrow \Coker_{\cA}(\alpha_Y f)\in\cX$, since $\cX$ is 
closed under quotients and  $T_Y\in\cX$.
Let $X\in\cA_\cX$ and let consider its short exact sequence
$0\to T(X)\to X\to F(X)\to 0$ where $T(X)$ (resp. $F(X)$) is its torsion 
(resp. torsion-free) part with respect to the torsion pair $(\cX,\cY)$ in $\cA$.
Then $T(X)\in\cX\subseteq \cA_\cX$, hence 
$F(X)\in\cA_\cX$ (since it is a cokernel 
of a morphism in $\cA_\cX$) which proves that $(\cX,\cY\cap\cA_\cX)$ is a torsion
pair in $\cA_\cX$.
The second statement follows dually.
\end{proof}

%

\section{$n$-Tilting Theorem}

\smallskip

\numero
\label{n:lista}
Let $\cC$ be a triangulated category endowed with a pair of  $t$-structures $(\cD,\cT)$:
$\cD^{\leq -n}\subseteq \cT^{\leq 0}\subseteq \cD^{\leq 0}$ and
$\cE:=\cH_\cT\cap\cH_\cD$.
The following statements hold true:
\begin{enumerate}
\item
any complex 
$\cdots\to0\to E^{-s}\to \cdots \to E^{-1}\to \stackrel{\point}{E^0}\to 0\to \cdots$
with $s\geq 0$
belongs to $\cT^{[-s,0]}\cap\cD^{[-s,0]}$ (\cite[Lem.~1.1]{FMS});
\item if $n\geq 1$,
given an exact sequence in $\cH_\cD$  (resp. $\cH_\cT$)
\[\xymatrixrowsep{0.5 em}\xymatrix@-7pt{
0\ar[r] & M\ar[r]^{g} & E_{-n+1}\ar[r]^{d_E^{-n+1}} & \cdots \ar[r]^{d_E^{-1}} & 
E_0\ar[r]^{f} & N \ar[r] & 0\\
}\quad E_{-i}\in\cE\quad  \forall i=0,\dots ,n-1\]
implies $N=\Coker_{\cH_\cD}d^{-1}_E\in\cE$
(resp. $M=\Ker_{\cH_\cT}d^{-n+1}_E\in\cE$). 
The argument of \cite[Lem.~1.2]{FMS}
gives a distinguished triangle
$M[n-1]\to [E_{-n+1}\to\cdots \to\stackrel{\point}{E_0}] \to N[0]\stackrel{+}\to$ hence
$M[n-1]\in \cH_\cD[n-1]\subseteq \cT^{\leq 1}$ and 
$[E_{-n+1}\to\cdots\to \stackrel{\point}{E_0}]\in \cT^{\leq 0}$ so
$N[0]\in \cH_\cD\cap  \cT^{\leq 0}=\cE$;
\item a complex $E^\point \in K(\cE)$ is acyclic in $\cH_\cD$
if and only if it is acyclic in $\cH_\cT$ and in this case for any $i$ we have
$\Ker_{\cH_\cD}d^i_{E^\point}\cong \Ker_{\cH_\cT}d^i_{E^\point}\in \cE$
(\cite[Prop.~1.3]{FMS});
\item $\cE$ is projectively complete
(any idempotent in $\cE$ splits in $\cH_\cD$ and it belongs to $\cH_\cT$ too);
$\cE$ is closed under extensions both in $\cH_\cD$ and $\cH_\cT$, hence the class of short exact sequences $0\to E_1\to E \to E_2\to 0$ 
(in $\cH_\cD$ or equivalently $\cH_\cT$)
provides a Quillen exact structure $(\cE,{\cE}x)$ on $\cE$.
\end{enumerate}

\begin{remark}
Let consider $\cC=D(\cH_\cD)$ and $\cE$ a cogenerating class in $\cH_\cD$.
By \cite[Lem.~1.4]{FMS} $\cE$ is generating in $\cH_\cT$ and
by point $(2)$ of \ref{n:lista}  any $A\in\cH_\cD$ admits
a copresentation of length at most $n$. Dually any $B\in\cH_\cT$
has a presentation of length  at most $n$.
\end{remark}

All the previous results combine into the following $n$ version of 
Theorem~\ref{Th:1--tilt}

\begin{theorem}{\bf n-Tilting Theorem.} (\cite[Th.~1.5]{FMS}  \label{Th:n--tilt})
Let $\cA$ be abelian category
such that its derived category $D(\cA)$ has Hom sets, let $\cD$ be the natural 
$t$-structure in $D(\cA)$ and  $\cT$ a 
$t$-structure such that $\cD^{\leq -n}\subseteq \cT^{\leq 0}\subseteq \cD^{\leq 0}$. 
Let us suppose that
 $\cE:=\cA\cap\cH_\cT$ cogenerates $\cA$, hence
there exists a triangle equivalence $E$:
\[
\xymatrixrowsep{1.2 em}
\xymatrix{
 & {K(\cE)\over\cN_{{\cE}x}}\ar@{_(->}[ld]_{\cong} \ar@{^(->}[rd]^{\cong} & \\
D(\cH_{\cT})\ar@{..>}[rr]^{\cong}_E& & D(\cA) \\
}\](where $\cN_{{\cE}x}$ is the null system of complexes in $K(\cE)$ acyclic in $\cA$ or equivalently in
$\cH_\cT$)  such that the restriction of $E$ to $\cH_\cT$ is naturally isomorphic to 
the inclusion 
$\cH_\cT\subseteq D(\cA)$. Moreover $\cE$ is generating in $\cH_\cT$.
\end{theorem}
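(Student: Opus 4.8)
The plan is to exhibit both $D(\cA)$ and $D(\cH_\cT)$ as Verdier localizations of $K(\cE)$ at the single null system $\cN_{{\cE}x}$ and to obtain the dotted equivalence as a composite. The inclusion $\cE\hookrightarrow\cA$ prolongs to a triangulated functor $K(\cE)\to K(\cA)\to D(\cA)$, and by point~(3) of~\ref{n:lista} a complex of $K(\cE)$ is acyclic in $\cA$ exactly when it belongs to $\cN_{{\cE}x}$; hence the functor annihilates $\cN_{{\cE}x}$ and descends to a triangulated functor $a\colon K(\cE)/\cN_{{\cE}x}\to D(\cA)$. Since by the same point~(3) acyclicity in $\cA$ and in $\cH_\cT$ agree, the inclusion $\cE\hookrightarrow\cH_\cT$ likewise yields a triangulated functor $b\colon K(\cE)/\cN_{{\cE}x}\to D(\cH_\cT)$. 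Both restrict to the identity on the image of $\cE$, so it suffices to prove that $a$ and $b$ are equivalences: the composite $a\circ b^{-1}$ is then the asserted equivalence $D(\cH_\cT)\cong D(\cA)$ extending $\cH_\cT\subseteq D(\cA)$.

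First I would establish essential surjectivity of $a$ using that $\cE$ cogenerates $\cA$. By the Remark preceding the theorem each $A\in\cA$ has an $\cE$-copresentation $0\to A\to E^0\to\cdots\to E^{n}\to 0$ of length at most $n$, all of whose truncations are again exact with terms in $\cE$ thanks to points~(2) and~(4). Splicing these bounded coresolutions by a dual horseshoe argument over the brutal filtration replaces any bounded complex of $\cA$-objects with a quasi-isomorphic complex in $K(\cE)$; an arbitrary object of $D(\cA)$ is then reached, as in~\cite{Chen}, by writing it as a homotopy colimit of brutal truncations and resolving compatibly, the uniform bound $n$ ensuring the comparison maps stabilize in each degree. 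Closure of $\cE$ under extensions (point~(4)) and cokernel-stability (point~(2)) are exactly what confine every step to $\cE$.

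The main obstacle is the full faithfulness of $a$, that is the bijectivity of
\[
\Hom_{K(\cE)/\cN_{{\cE}x}}(E^\point,F^\point)\longrightarrow\Hom_{D(\cA)}(E^\point,F^\point)
\]
for $E^\point,F^\point\in K(\cE)$. I would attack this through a comparison of calculi of fractions: a morphism in $D(\cA)$ is a roof $E^\point\xleftarrow{\ \sim\ }X^\point\to F^\point$ with $X^\point\in K(\cA)$, and one must dominate $X^\point$ by a complex of $K(\cE)$ whose structure map to $E^\point$ has cone in $\cN_{{\cE}x}$, and dually show that a roof already in $K(\cE)$ which dies in $D(\cA)$ is annihilated modulo $\cN_{{\cE}x}$. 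Both reduce to resolving a quasi-isomorphism having an $\cE$-complex on one end by $\cE$-complexes; here the finiteness of the copresentation length $n$ is decisive, since it forces these resolutions to terminate and the attendant homotopy (co)limits to converge, which is precisely the assertion that $\cN_{{\cE}x}$ is the full kernel of the localization $K(\cE)\to D(\cA)$.

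For $b$ I would invoke the evident duality: passing to the opposite category interchanges aisles with co-aisles and thus the roles of $\cD$ and $\cT$, turning the hypothesis that $\cE$ cogenerates $\cH_\cD=\cA$ into the hypothesis that $\cE$ generates $\cH_\cT$, so that the argument for $a$ applied in $\cH_\cT$ shows $b$ is an equivalence. The last clause, that $\cE$ is generating in $\cH_\cT$, is recorded in the Remark (it is~\cite[Lemma~1.4]{FMS}) and is the input making the dual argument available. With $a$ and $b$ both equivalences the triangle commutes and the theorem follows.
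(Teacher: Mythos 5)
Your skeleton coincides with the route the paper itself takes (the theorem is imported from \cite[Theorem~1.5]{FMS}, whose ingredients are listed in \ref{n:lista}): realize both $D(\cA)$ and $D(\cH_\cT)$ as Verdier quotients of $K(\cE)$ by the single null system $\cN_{{\cE}x}$, glue the two notions of acyclicity by point (3) of \ref{n:lista}, and compose. The difference is that the paper delegates both localization equivalences to Proposition~\ref{KS} (Kashiwara--Schapira), applied once to the cogenerating pair $\cE\subseteq\cA$ and once, in dual form, to the generating pair $\cE\subseteq\cH_\cT$, whereas you hand-roll the resolutions and the calculus of fractions. Two of your steps have genuine gaps. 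The first is the duality by which you dispose of $b$. Dualizing the ambient picture (``interchanging aisles and co-aisles, hence the roles of $\cD$ and $\cT$'') is circular: for the dualized argument-for-$a$ to apply in the opposite category, $D(\cA)^\circ$ would have to be the derived category of the heart of the outer $t$-structure there, i.e. one would need $D(\cA)\cong D(\cH_\cT)$, which is the conclusion being proven. Nor is ``$\cE$ generates $\cH_\cT$'' the formal dual of the hypothesis ``$\cE$ cogenerates $\cA$'' --- it is the separate, nontrivial \cite[Lemma~1.4]{FMS}, which you do cite. What saves the strategy is to state and prove the localization result intrinsically, for an abelian category $\cB$ with a (co)generating subcategory satisfying the length-$n$ closure condition of point (2) of \ref{n:lista}, never mentioning $\cC$, $\cD$ or $\cT$, and then apply it and its dual to $\cB=\cA$ and $\cB=\cH_\cT$ (the hypotheses for $\cH_\cT$ being supplied by \cite[Lemma~1.4]{FMS}, point (2) for $\cH_\cT$, and the Remark); this is exactly what Proposition~\ref{KS} packages. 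Your argument, whose faithfulness step is phrased via roofs inside $D(\cA)$, is not yet in this intrinsic form, so ``the argument for $a$ applied in $\cH_\cT$'' does not literally parse.

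The second gap is the final compatibility claim. From ``$a$ and $b$ restrict to the identity on $\cE$'' you conclude that $a\circ b^{-1}$ extends the inclusion $\cH_\cT\subseteq D(\cA)$; this is a non sequitur, since two triangulated functors agreeing on $\cE$ need not agree on $\cH_\cT$. Concretely, for $B\in\cH_\cT$ the object $b^{-1}(B)$ is represented by an $\cE$-presentation $P^\point$ of $B$, and one must prove $a(P^\point)\cong B$ in $D(\cA)$, naturally in $B$. The augmentation $P^\point\to B[0]$ is not a morphism of complexes over $\cE$, so point (3) of \ref{n:lista} says nothing about its cone; the identification instead uses that short exact sequences in $\cH_\cT$ are distinguished triangles in $D(\cA)$ (because $\cH_\cT$ is the heart of a $t$-structure on $D(\cA)$), together with an induction along the length-$\leq n$ presentation, in the spirit of point (1) of \ref{n:lista}. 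A smaller point in the same vein: you cannot write an arbitrary object of $D(\cA)$ as a homotopy colimit of brutal truncations, because $D(\cA)$ is not assumed to have countable coproducts ($\cA$ is merely abelian with $D(\cA)$ having Hom sets); the degreewise stabilization you invoke, forced by the uniform bound $n$, is the actual mechanism and has to carry the unbounded case on its own.
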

%

\begin{definition}\label{D:ntpt}
A \emph{pair of $t$-structures}
$(\cD,\cT)$ on a triangulated category $\cC$ is called \emph{$n$-tilting} if
the following statements hold:
\begin{enumerate}
\item $\cD^{\leq -n}\subseteq \cT^{\leq 0}\subseteq \cD^{\leq 0}$;
\item the following equivalent conditions are satisfied:
\begin{description}
\item[(i)] given $\cE:=\cH_\cD\cap\cH_\cT$ we get $\cC\simeq  K(\cE)/\cN$ and
$D(\cH_\cD)\stackrel{\simeq}\hookleftarrow K(\cE)/\cN
\stackrel{\simeq}\hookrightarrow D(\cH_\cT)$
where $\cN$ is the null system of complexes in 
$K(\cE)$ acyclic in $\cH_{\cD}$
 or equivalently in
$\cH_\cT$;
\item[(ii)]  $\cC\simeq D(\cH_\cD)$ and $\cE$ cogenerates $\cH_\cD$;
\item[(iii)] $\cC\simeq D(\cH_\cT)$ and $\cE$ generates $\cH_\cT$.
\end{description}
\end{enumerate}
\end{definition}

If $\cD^{\leq -n}\subseteq \cT^{\leq 0}\subseteq \cD^{\leq 0}$ by 
Theorem~\ref{Th:n--tilt} we have that $(ii)$ 
 implies $(i)$ and $(iii)$, dually $(iii)$ 
 implies $(i)$ and $(ii)$ (by the cotilting version of Theorem~\ref{Th:n--tilt}) so $(ii)$ is equivalent to $(iii)$.
If $(i)$ holds $\cC\simeq D(\cH_\cD)$ and  $\cE$ cogenerates $\cH_\cD$
since any $A\in \cH_\cD$ can be represented by a complex $E^\point\in K(\cE)$
and so $A\hookrightarrow \Coker_{\cH_\cD}(d^{-1}_{E^\point})\in \cE$ 
($ \Coker_{\cH_\cD}(d^{-1}_{E^\point})\in \cE$ by (2) of \ref{n:lista}) which proves that $(i)$ implies $(ii)$.

We note that any $n$-tilting pair of $t$-structures is
also $m$-tilting for any $m\geq n$.

\begin{proposition}\label{P:DT=RL}
Let $(\cD,\cT)$ be a $n$-tilting pair of $t$-structures in 
a triangulated category
$\cC$. Hence the equivalence
$F:\cC\stackrel{\simeq}\to 
K(\cE)/\cN_{{\cE}x}=D(\cE,{\cE}x)$
(where the Quillen exact structure on $\cE$ is the one of \ref{n:lista} (4))
gives:
\begin{align*}
F(\cT^{\leq 0})=&\{X^\point\in K(\cE)\; |\; X^\point\cong E^\point_{\leq 0}\; \hbox{in }
K(\cE)/\cN_{{\cE}x} \hbox{ with }
E^\point_{\leq 0}\in \cL\cK_\cE^{\leq 0}\}=:\cL\cD_{(\cE,{\cE}x)}^{\leq 0}\\
F(\cD^{\geq 1})=&\{X^\point\in K(\cE)\; |\; X^\point\cong E^\point_{\geq 1}\; \hbox{in }
K(\cE)/\cN_{{\cE}x} \hbox{ with }
E^\point_{\geq 1}\in \cR\cK^{\geq 1}\}=:\cR\cD_{(\cE,{\cE}x)}^{\geq 1}.\\
\end{align*}
\end{proposition}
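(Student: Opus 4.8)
The plan is to prove the two asserted equalities by establishing each inclusion separately, exploiting the identification $D(\cE,{\cE}x)\cong D(\cH_\cD)\cong D(\cH_\cT)$ coming from the $n$-tilting hypothesis (Definition~\ref{D:ntpt} and Theorem~\ref{Th:n--tilt}), under which $\cD$ (respectively $\cT$) corresponds to the standard $t$-structure on $D(\cH_\cD)$ (respectively $D(\cH_\cT)$). Consequently, for a complex $E^\point\in K(\cE)$ the cohomologies $H^i_\cD(E^\point)$ and $H^i_\cT(E^\point)$ are computed as the naive cohomologies of $E^\point$ regarded as a complex in the abelian category $\cH_\cD$, respectively $\cH_\cT$; these vanish simultaneously by point (3) of \ref{n:lista}, which is exactly what makes $\cN_{{\cE}x}$ well defined. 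The two easy inclusions are then immediate from $\cE=\cH_\cD\cap\cH_\cT$. Indeed, a complex $E^\point_{\leq 0}\in K^{\leq 0}(\cE)$ is a complex in $\cH_\cT$ concentrated in degrees $\leq 0$, so $H^i_\cT(E^\point_{\leq 0})=0$ for $i>0$ and hence $E^\point_{\leq 0}\in\cT^{\leq 0}$, giving $\cL\cD^{\leq 0}_{(\cE,{\cE}x)}\subseteq\cT^{\leq 0}$; dually $E^\point_{\geq 1}\in K^{\geq 1}(\cE)$ is a complex in $\cH_\cD$ concentrated in degrees $\geq 1$, so $H^i_\cD(E^\point_{\geq 1})=0$ for $i\leq 0$ and $E^\point_{\geq 1}\in\cD^{\geq 1}$, giving $\cR\cD^{\geq 1}_{(\cE,{\cE}x)}\subseteq\cD^{\geq 1}$.

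For the reverse inclusion $\cT^{\leq 0}\subseteq\cL\cD^{\leq 0}_{(\cE,{\cE}x)}$ I would start from $X^\point\in\cT^{\leq 0}$, represent it by some $E^\point\in K(\cE)$, and apply the canonical truncation $\tau^{\leq 0}_{\cH_\cT}$: since $H^i_\cT(E^\point)=0$ for $i>0$, the truncated complex $Y^\point:=\tau^{\leq 0}_{\cH_\cT}E^\point$ (with entries in $\cH_\cT$, concentrated in degrees $\leq 0$) is isomorphic to $X^\point$ in $D(\cH_\cT)\cong D(\cE)$. It then remains to replace $Y^\point$ by a genuine complex of objects of $\cE$ without leaving degrees $\leq 0$. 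Here I would invoke that $\cE$ generates $\cH_\cT$ with presentations of length at most $n$ (the Remark following \ref{n:lista}, resting on point (2) of \ref{n:lista}): each $Y^i$ admits an exact $\cE$-resolution $0\to P^{i,-n}\to\cdots\to P^{i,0}\to Y^i\to 0$. Assembling these into a Cartan--Eilenberg resolution and passing to the total complex $T^\point$ with $T^m=\bigoplus_{i+j=m}P^{i,j}$ yields a quasi-isomorphism $T^\point\to Y^\point$ in $\cH_\cT$; since $i\leq 0$ and $j\in\{-n,\dots,0\}$ one has $m\leq 0$, so $T^\point\in K^{\leq 0}(\cE)$ and $X^\point\cong T^\point$, proving $X^\point\in\cL\cD^{\leq 0}_{(\cE,{\cE}x)}$.

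The inclusion $\cD^{\geq 1}\subseteq\cR\cD^{\geq 1}_{(\cE,{\cE}x)}$ I would handle by the dual argument: for $X^\point\in\cD^{\geq 1}$ apply the canonical truncation $\tau^{\geq 1}_{\cH_\cD}$ to obtain a complex $Y^\point$ of objects of $\cH_\cD$ concentrated in degrees $\geq 1$; then use that $\cE$ cogenerates $\cH_\cD$ with copresentations of length at most $n$ to coresolve each $Y^i$ as $0\to Y^i\to Q^{i,0}\to\cdots\to Q^{i,n}\to 0$ with $Q^{i,j}\in\cE$, and take the total complex, which now lands in $K^{\geq 1}(\cE)$.

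The main obstacle I expect is the resolution step: one must verify that the (co)resolutions of the individual terms can be chosen compatibly so as to form an honest double complex whose total complex is quasi-isomorphic to $Y^\point$, and --- crucially --- that the total complex stays inside $K^{\leq 0}(\cE)$ (respectively $K^{\geq 1}(\cE)$) with only finite direct sums in each degree even when $Y^\point$ is unbounded below (respectively above). Both are guaranteed precisely because the resolution length is the fixed finite number $n$: for each total degree $m$ only finitely many pairs $(i,j)$ contribute, and the constraints $i\leq 0,\ j\leq 0$ (respectively $i\geq 1,\ j\geq 0$) confine $m=i+j$ to the half-line $m\leq 0$ (respectively $m\geq 1$). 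This finiteness, which is exactly the content of the bounded copresentation/presentation length supplied by the $n$-tilting hypothesis, is what makes the argument go through.
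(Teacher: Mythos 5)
Your reduction to the equivalences $\cC\cong D(\cH_\cT)\cong D(\cH_\cD)$, the identification of $\cT^{\leq 0}$ and $\cD^{\geq 1}$ with $D^{\leq 0}(\cH_\cT)$ and $D^{\geq 1}(\cH_\cD)$, and the two easy inclusions are all fine, and this is exactly the paper's (very short) strategy: the whole content of the proposition is the claim that an object of $D^{\leq 0}(\cH_\cT)$, resp. $D^{\geq 1}(\cH_\cD)$, can be represented by a complex in $K^{\leq 0}(\cE)$, resp. $K^{\geq 1}(\cE)$, because $\cE$ generates $\cH_\cT$, resp. cogenerates $\cH_\cD$. It is precisely in your proof of this representation step that there is a genuine gap.

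You propose to resolve each term $Y^i$ of $\tau^{\leq 0}_{\cH_\cT}E^\point$ separately by a finite $\cE$-resolution and to assemble these into a Cartan--Eilenberg double complex. That assembly is not available here: lifting the differentials $d_Y^i$ to chain maps between independently chosen resolutions is the comparison theorem, and arranging that the horizontal maps square to zero (without which the ``total complex'' is not a complex at all) is done classically by resolving boundaries and cohomologies and gluing via the horseshoe lemma --- all of this requires the resolving objects to be projective. Objects of $\cE$ are \emph{not} projective in $\cH_\cT$; $\cE$ is merely a generating class there (already for $n=1$ it is a cotilting torsion-free class, e.g. $A_{\geq 0}$ inside the tilted heart of $\Coh(\cO_{\mathbb{P}^1_k})$). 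You do flag this compatibility problem as ``the main obstacle'', but the justification you then give --- that the resolutions have the fixed finite length $n$ --- is a non sequitur: finite length only controls the degree bookkeeping (which is automatic anyway, since with both indices bounded above, resp. below, each total degree receives only finitely many summands however long the columns are); it does nothing to make arbitrarily chosen resolutions fit into a commuting double complex. What actually proves the step, and what the paper's one-line appeal to generation rests on, is the standard degreewise construction of a resolution of the \emph{whole} complex (the argument behind Proposition~\ref{KS}, cf. \cite[\S 13.2]{KS}): by descending induction choose $E^i\in\cE$ together with an epimorphism
\[
E^i\twoheadrightarrow Y^i\times_{Z^{i+1}(Y^\point)}Z^{i+1}(E^\point),
\]
where $Z^{i+1}$ denotes the kernel of the degree-$(i+1)$ differential and the fibre product is taken over the data already constructed; the two projections define $d_E^i$ and $E^i\to Y^i$, and one checks that the resulting $E^\point\in K^{\leq 0}(\cE)$ maps quasi-isomorphically to $Y^\point$. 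This uses only that $\cE$ is generating and closed under finite direct sums --- no projectivity and no finite resolution length --- and its dual (with $\cE$ cogenerating $\cH_\cD$) gives the statement for $\cD^{\geq 1}$.
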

\begin{proof}
By definition $D(\cE,{\cE}x)={K(\cE)\over{\cN_{{\cE}x}}}$ (as introduced by Neeman 
in \cite{NeeDEC} see 
Appendix~\ref{N3.2}).
Since $(\cD,\cT)$ is $n$-tilting ,
we have that under the $n$-tilting equivalence
$D^{\leq 0}(\cH_\cT)$ corresponds to $\cT^{\leq 0}$, while $\cD^{\geq 1}$
corresponds to $D^{\geq 1}(\cH_\cD)$.
Moreover the class $\cE$ generates $\cH_\cT$ and so any object in 
$D^{\leq 0}(\cH_\cT)$ can be represented in $ K(\cE)/\cN_{{\cE}x}$ by a complex in $K^{\leq 0}(\cE)$.
On the other side since $\cE$ cogenerates $\cH_\cD$  any object in 
$D^{\geq 1}(\cH_\cD)$ can be represented in $ K(\cE)/\cN_{{\cE}x}$ by a complex in $K^{\geq 1}(\cE)$.
\end{proof}

\begin{remark}\label{R:m>n}
The proof of Theorem~\ref{Th:n--tilt} produces the desired equivalence on the
derived categories of the hearts passing trough an equivalence with
the triangulated category ${K(\cE)\over\cN_{{\cE}x}}=D(\cE,{\cE}x)$ where 
$\cE$ is the intersection of the hearts.
We remark that the role of the Quillen exact
structure is important in order to define $D(\cE,{\cE}x)$.
The previous proposition proves that the category $\cE$ encodes the data of the $t$-structures
since $(\cD,\cT)\simeq (\cR\cD_{(\cE,{\cE}x)},\cL\cD_{(\cE,{\cE}x)})$.
\end{remark}

\section{$2$-tilting torsion classes}\label{S:2}
As we will see soon the case $n=2$ is neatly easier than $n>2$ and so
we will first analyze this case in detail. 

\begin{lemma}\label{L:3.1}
Let $(\cD,\cT)$ be a $2$-tilting pair of $t$-structures in $\cC\simeq D(\cH_\cD)\simeq D(\cH_\cT)$.
Hence $\cE:=\cH_\cD\cap\cH_\cT$ is closed under extensions (both in  $\cH_\cD$ and $\cH_\cT$);
it  admits kernels and cokernels and
given $d:E\to F$ in $\cE$ we have 
$\Ker_{\cE}(d)=\Ker_{\cH_\cT}(d)\in\cE$ while 
$\Coker_{\cE}(d)=\Coker_{\cH_\cD}(d)\in\cE$.
Moreover the inclusion functor $i:\cE\hookrightarrow \cH_{\cD}$ admits a right adjoint $t:\cH_{\cD}\to \cE$
while the inclusion functor $j:\cE\hookrightarrow \cH_{\cT}$ admits a left adjoint $f:\cH_{\cT}\to \cE$.
\end{lemma}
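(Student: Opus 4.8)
The plan is to derive each assertion from the structural facts collected in \ref{n:lista}, specialised to $n=2$, where copresentations in $\cH_\cD$ and presentations in $\cH_\cT$ have length at most two.

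First I would dispatch the formal parts. Closure of $\cE$ under extensions in both $\cH_\cD$ and $\cH_\cT$ is precisely point (4) of \ref{n:lista}. For kernels and cokernels, take $d\colon E\to F$ in $\cE$ and form, in the abelian category $\cH_\cD$, the four-term exact sequence $0\to\Ker_{\cH_\cD}(d)\to E\to F\to\Coker_{\cH_\cD}(d)\to 0$. Since $E,F\in\cE$, the $\cH_\cD$-half of point (2) of \ref{n:lista} (the case $n=2$) forces $\Coker_{\cH_\cD}(d)\in\cE$; dually, reading the analogous sequence inside $\cH_\cT$ and invoking the $\cH_\cT$-half of (2) gives $\Ker_{\cH_\cT}(d)\in\cE$. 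To see that these objects are genuinely the kernel and cokernel computed in $\cE$, I would use that the embeddings $\cE\hookrightarrow\cH_\cD$ and $\cE\hookrightarrow\cH_\cT$ are full: a morphism $F\to G$ with $G\in\cE$ and $g\circ d=0$ factors uniquely through $\Coker_{\cH_\cD}(d)$ in $\cH_\cD$, and since the source and target of this factorisation both lie in $\cE$ it is a morphism of $\cE$; symmetrically for $\Ker_{\cH_\cT}(d)$ and morphisms $G\to E$ composing to zero with $d$. This yields $\Coker_\cE(d)=\Coker_{\cH_\cD}(d)$ and $\Ker_\cE(d)=\Ker_{\cH_\cT}(d)$, both in $\cE$.

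For the right adjoint I would argue by representability. As $(\cD,\cT)$ is $2$-tilting, $\cE$ cogenerates $\cH_\cD$, so every $A\in\cH_\cD$ embeds as $A\hookrightarrow E^0$ with $E^0\in\cE$; embedding the cokernel of this mono again into some $E^1\in\cE$ produces a morphism $f\colon E^0\to E^1$ with $A=\Ker_{\cH_\cD}(f)$. For $E\in\cE$, left exactness of $\Hom_{\cH_\cD}(E,-)$ together with fullness identifies
\[
\Hom_{\cH_\cD}(E,A)=\Ker\bigl(\Hom_\cE(E,E^0)\to\Hom_\cE(E,E^1)\bigr)=\Hom_\cE(E,\Ker_\cE(f)),
\]
the last equality being the universal property of the $\cE$-kernel constructed above. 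Hence the functor $\Hom_{\cH_\cD}(i(-),A)\colon\cE^{\mathrm{op}}\to\mathbf{Ab}$ is represented by $t(A):=\Ker_\cE(f)$; by the Yoneda lemma this determines $t$ as a functor, independent of the chosen copresentation and right adjoint to $i$. The left adjoint $f\colon\cH_\cT\to\cE$ is obtained by the dual argument: $\cE$ generates $\cH_\cT$, so each $B\in\cH_\cT$ admits a presentation $E^{-1}\xrightarrow{g}E^0\to B\to 0$ with $E^{-1},E^0\in\cE$, and one sets $f(B):=\Coker_\cE(g)$, representability being verified by right exactness of $\Hom_{\cH_\cT}(-,B)$ restricted to $\cE$.

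The main obstacle is the representability step rather than any computation: the whole construction hinges on point (2) of \ref{n:lista}, whose function is exactly to keep the ambient cokernel (resp. kernel) inside $\cE$, and on the length-two (co)presentations available in the $2$-tilting setting. Once representability is established, naturality, functoriality, and independence of the chosen (co)presentation follow formally from Yoneda, so no further case analysis is needed.
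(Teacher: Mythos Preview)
Your proof is correct. The treatment of extension-closure, kernels, and cokernels is essentially identical to the paper's, both drawing directly on points (2) and (4) of \ref{n:lista}.

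Where you diverge is in constructing the adjoints. The paper argues inside the triangulated category: for $A\in\cH_\cD$ it takes the $\cT$-truncation triangle $\tau^{\leq 0}(A)\to A\to\tau^{\geq 1}(A)\+$ and observes that, since $\tau^{\geq 1}(A)\in\cT^{\geq 1}\subseteq\cD^{\geq -1}$ and $A\in\cH_\cD$, one has $\tau^{\leq 0}(A)\in\cD^{\geq 0}\cap\cT^{\leq 0}=\cE$; the adjunction $\cH_\cD(i(E),A)\cong\cC(E,\tau^{\leq 0}A)=\cE(E,t(A))$ then follows from the universal property of the truncation. Your approach instead stays inside the abelian heart and builds $t(A)$ as $\Ker_\cE$ of an $\cE$-copresentation of $A$, which is exactly the construction used in Lemma~\ref{L:eq1ttc} for the $1$-tilting case (and again later in Theorem~\ref{T:3.5}). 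Both yield the same functor; the paper's version is shorter and exploits the ambient triangulated structure, while yours is more self-contained and makes visible that the adjoint depends only on the data already extracted in \ref{n:lista}. One small slip: in your dual argument for $f$, the relevant exactness is left exactness of the contravariant $\Hom_{\cH_\cT}(-,E)$ applied to the presentation $E^{-1}\to E^0\to B\to 0$, not ``right exactness of $\Hom_{\cH_\cT}(-,B)$''.
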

%
\begin{proof}
Let $d:E\to F$ be a morphism in $\cE$, 
by point $(2)$ of \ref{n:lista} we have: $\Ker_{\cH_\cT}d\in\cE$  while $\Coker_{\cH_\cD}d\in\cE$ and so they provide   the kernel, resp. the cokernel, of $d$ in $\cE$. 
By hypothesis
$\cD^{\leq -2}\subseteq \cT^{\leq 0}\subseteq \cD^{\leq 0}$ so, by orthogonality  $\cT^{\geq 1}\subseteq \cD^{\geq -1}$.
Let $A\in\cH_\cD$, the distinguished triangle $\tau^{\leq 0}(A)\to A \to \tau^{\geq 1}(A){\stackrel{+}\to} $
proves that $\tau^{\leq 0}(A)\in\cD^{\geq 0}$ since $A\in\cH_{\cD}$
and $ \tau^{\geq 1}(A)\in \cT^{\geq 1}\subseteq \cD^{\geq -1}$ so
$t(A):=\tau^{\leq 0}(A)\in\cT^{\leq 0}\cap\cD^{\geq 0}=\cE$ (recall notation \ref{N:tf}).
Hence 
$\cH_\cD(i(E),A)=\cC(E,A)\simeq\cC(E,\tau^{\leq 0}A)=\cE(E,t(A))$
for any $E\in\cE$, which proves that $t$ is a right adjoint of $i$.
Dually the functor $\delta^{\geq 0}$ restricted to $\cH_\cT$ takes image
in $\cE$ and provides the left adjoint $f$ of $j$.
\end{proof}
%
 
 
%

At the $2$-level a tilting torsion class is given by the $2$-version of the characterization of Lemma~\ref{L:eq1ttc}:
\begin{definition}\label{D:2tiltorcl}
Let $\cA$ be an abelian category.
A full subcategory $\cE\hookrightarrow \cA$ is a
 \emph{$2$-tilting torsion class} if
\begin{enumerate}
\item $\cE$ cogenerates $\cA$;
 \item $\cE$ is closed under extensions in $\cA$;
\item $\cE$ has kernels;
\item for any exact sequence  $0\to A\to X_1\to X_2\to B\to 0$ in $\cA$ with $X_i\in\cE$ 
for $i\in\{1,2\}$ and $A,B\in\cA$ we have $B\in\cE$.
\end{enumerate}
Moreover $\cE$ is endowed with a canonical Quillen
exact structure whose short exact sequences are exact sequences in  $\cA$ with terms in $\cE$. Any $1$-tilting torsion class as in
Definition~\ref{D:tilttorpair} is also a $2$-tilting torsion class.

Dually a \emph{$2$-cotilting torsion-free class} in $\cA$ is  a full generating 
extension closed subcategory $\cE$ of $\cA$
admitting cokernels and 
closed under  kernels in $\cA$.
\end{definition}
%

\begin{proposition}\label{P:2t}
Given $(\cD,\cT)$ a $2$-tilting pair of $t$-structures the category
$\cE:=\cH_\cD\cap\cH_\cT$ is a $2$-tilting torsion class 
(resp. a $2$-tilting torsion-free class) in $\cH_\cD$
(resp. in $\cH_\cT$).
\end{proposition}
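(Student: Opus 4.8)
The plan is to verify directly the four defining conditions of a $2$-tilting torsion class (Definition~\ref{D:2tiltorcl}) for $\cE:=\cH_\cD\cap\cH_\cT$ inside $\cA=\cH_\cD$, and then to obtain the torsion-free statement by the dual argument in $\cH_\cT$. The essential observation is that nearly all of the required input has already been assembled earlier: Lemma~\ref{L:3.1} records the internal structure of $\cE$ (extension-closure, existence of kernels and cokernels, and the identifications $\Ker_\cE(d)=\Ker_{\cH_\cT}(d)$, $\Coker_\cE(d)=\Coker_{\cH_\cD}(d)$), while the definition of an $n$-tilting pair together with the list~\ref{n:lista} governs the interaction of $\cE$ with the ambient hearts.

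First I would dispatch conditions (1)--(3). Condition~(2) ($\cE$ closed under extensions in $\cH_\cD$) and condition~(3) ($\cE$ has kernels) are immediate from Lemma~\ref{L:3.1}. Condition~(1) ($\cE$ cogenerates $\cH_\cD$) is built into the hypothesis: since $(\cD,\cT)$ is a $2$-tilting pair, condition (ii) of Definition~\ref{D:ntpt} holds, i.e.\ $\cC\cong D(\cH_\cD)$ and $\cE$ cogenerates $\cH_\cD$.

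The key step, and the one I would regard as the crux of the argument, is condition~(4); it is precisely the $n=2$ instance of point~(2) of~\ref{n:lista}. Indeed, a $2$-tilting pair satisfies $\cD^{\leq -2}\subseteq\cT^{\leq 0}\subseteq\cD^{\leq 0}$, so point~(2) of~\ref{n:lista} applies with $n=2$: any exact sequence $0\to M\to E_{-1}\to E_0\to N\to 0$ in $\cH_\cD$ with $E_{-1},E_0\in\cE$ forces $N=\Coker_{\cH_\cD}(d^{-1}_E)\in\cE$. Relabelling $M=A$, $E_{-1}=X_1$, $E_0=X_2$, $N=B$ yields exactly condition~(4), so $\cE$ is a $2$-tilting torsion class in $\cH_\cD$.

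Finally, the torsion-free statement in $\cH_\cT$ follows by running the dual versions of these same inputs: $\cE$ generates $\cH_\cT$ by condition (iii) of Definition~\ref{D:ntpt}; $\cE$ is extension-closed in $\cH_\cT$ and admits cokernels by Lemma~\ref{L:3.1}; and the ``respectively $\cH_\cT$'' half of point~(2) of~\ref{n:lista}, giving $M=\Ker_{\cH_\cT}(d^{-n+1}_E)\in\cE$, provides the closure under kernels demanded by Definition~\ref{D:2tiltorcl}. I do not anticipate a genuine obstacle: the whole proof is an assembly of results already in place, and the only point requiring care is the bookkeeping of indices when matching the four-term sequence of~\ref{n:lista}(2) at $n=2$ with the sequence in Definition~\ref{D:2tiltorcl}(4), together with keeping track of the symmetric roles of $\cH_\cD$ (cokernels, torsion side) and $\cH_\cT$ (kernels, torsion-free side).
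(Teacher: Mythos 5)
Your proposal is correct and follows essentially the same route as the paper's own proof: the paper likewise reads off cogeneration/generation from Definition~\ref{D:ntpt}, extension-closure from point~(4) of~\ref{n:lista}, and obtains both the kernel condition and condition~(4) (equivalently, $\Coker_{\cH_\cD}(d)\in\cE$ for every morphism $d$ in $\cE$) from point~(2) of~\ref{n:lista} with $n=2$, treating the torsion and torsion-free statements simultaneously. Your only cosmetic difference is routing conditions~(2) and~(3) through Lemma~\ref{L:3.1}, which is itself proved from the same points of~\ref{n:lista}.
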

\begin{proof}
By Definition~\ref{D:ntpt}  $\cE$ cogenerates $\cH_\cD$
and generates $\cH_\cT$. By point (4) of \ref{n:lista}
we have that $\cE$ is closed under extensions both in 
$\cH_\cD$ and $\cH_\cT$.
Given a morphism $d:X_{1}\to X_2$ in $\cE$ by point (2) of 
\ref{n:lista} we deduce that $\Ker_{\cE}d\cong \Ker_{\cH_\cT}d\in\cE$
and $\Coker_{\cE} d\cong\Coker_{\cH_\cD}d\in\cE$ 
which concludes the proof.
\end{proof}
%



\begin{theorem}\label{T:3.5}
Let $\cA$ be an abelian category and let $\cD$ be the natural $t$-structure on the triangulated category $D(\cA)$.
Let  $i:\cE\hookrightarrow \cA$ be a $2$-tilting torsion class on $\cA$.
Hence $\cT^{\leq 0}:=\cD^{\leq -2}\star \cE\star \cE[1]$ is an aisle in $D(\cA)$ (see \ref{B2.1})
such that $\cE=\cA\cap\cH_\cT$ and the
pair $(\cD,\cT)$ is a $2$-tilting pair of $t$-structures.
We will say that the $t$-structure $\cT$ is obtained by \emph{tilting}
$\cD$ with respect to the $2$-tilting torsion class $\cE$.
\end{theorem}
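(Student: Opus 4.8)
I would verify that $\cT^{\leq 0}$ is an aisle by exhibiting, for every object, a truncation triangle; the point is that once this is done, extension-closure of $\cT^{\leq 0}$ (which is delicate here, since quotients of objects of $\cE$ need not lie in $\cE$) comes for free. Concretely I set $\cT^{\geq 1}:=(\cT^{\leq 0})^{\perp}$ and check the standard criterion guaranteeing that $(\cT^{\leq 0},\cT^{\geq 1})$ is a $t$-structure: $\Hom(\cT^{\leq 0},\cT^{\geq 1})=0$ (true by definition of $\perp$), the stability $\cT^{\leq 0}[1]\subseteq\cT^{\leq 0}$ (which forces $\cT^{\geq 1}[-1]\subseteq\cT^{\geq 1}$ by passing to $\perp$), and the existence for each $C$ of a triangle $U\to C\to V\+$ with $U\in\cT^{\leq 0}$ and $V\in\cT^{\geq 1}$. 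The bounds $\cD^{\leq -2}\subseteq\cT^{\leq 0}\subseteq\cD^{\leq 0}$ follow at once from $0\in\cE$, from $\cE\subseteq\cH_\cD$, from $\cE[1]\subseteq\cD^{\leq -1}$, and from $\cD^{\leq 0}$ being closed under $\star$.

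First I would record the one new piece of structure replacing the torsion radical of the $1$-tilting case: the inclusion $i\colon\cE\to\cH_\cD$ admits a right adjoint $t$. Indeed, cogeneration applied twice writes any $A\in\cH_\cD$ as $A=\Ker_{\cH_\cD}(X_0\to X_1)$ with $X_0,X_1\in\cE$, so that $\cH_\cD(iE,A)\cong\cE(E,\Ker_\cE(X_0\to X_1))$ by Definition~\ref{D:2tiltorcl}(3), exactly as in the proof of Lemma~\ref{L:eq1ttc}; put $tA:=\Ker_\cE(X_0\to X_1)$. The crucial difference with the $1$-tilting situation is that the counit $\varepsilon_A\colon itA\to A$ is \emph{not} mono in general, which is precisely why $\cE$ is only a $2$-tilting (and not a $1$-tilting) torsion class. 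The shift-stability $\cT^{\leq 0}[1]\subseteq\cT^{\leq 0}$ I would then check cohomologically: a computation with the long exact sequence of $\cD$-cohomology shows every $C\in\cT^{\leq 0}$ satisfies $H_\cD^{\geq 1}(C)=0$ and $H_\cD^0(C)=\Coker_{\cH_\cD}(\partial)\in\cE$ for a map $\partial$ in $\cE$ (this is where Definition~\ref{D:2tiltorcl}(4) enters), whence $C[1]$ has $H_\cD^{\geq 0}(C[1])=0$ and $H_\cD^{-1}(C[1])=H_\cD^0(C)\in\cE$; such objects sit in $\cD^{\leq -2}\star\cE[1]\subseteq\cT^{\leq 0}$ via the canonical triangle $\tau_\cD^{\leq -2}C[1]\to C[1]\to H_\cD^{-1}(C[1])[1]\+$.

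The heart of the argument — and the step I expect to be the main obstacle — is the construction of the truncation triangle, which must be carried out in two stages mirroring the two factors $\cE$ and $\cE[1]$. After reducing via the $\cD$-truncation $\tau_\cD^{\leq 0}C\to C\to\tau_\cD^{\geq 1}C\+$ (whose cone lies in $\cD^{\geq 1}\subseteq\cT^{\geq 1}$) to the case $H_\cD^{\geq 1}(C)=0$, I would first correct degree $0$ by forming the cone $V_1$ of a lift $itH_\cD^0(C)\to C$ of the counit $\varepsilon$ (the lift exists because the obstruction lies in a group $\Hom(\cD^{\geq 0},\cD^{\leq -2})=0$), achieving $\Hom(\cE,V_1)=0$ through the adjunction isomorphism $\Hom(E,itA)\cong\Hom(E,A)$; then correct degree $-1$ of $V_1$ by the analogous shifted procedure to kill $\Hom(\cE[1],-)$. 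Verifying that the resulting $V$ genuinely lands in $(\cT^{\leq 0})^{\perp}$ — equivalently that $\Hom(\cE,V)=0$ and $\Hom(\cE[1],V)=0$ hold simultaneously, and that the fibre $U$ of $C\to V$ lies in $\cT^{\leq 0}$ — is where Definition~\ref{D:2tiltorcl}(3)--(4) is used in full force, because the non-monomorphic counit forces one to control $\Ker_{\cH_\cD}\varepsilon$ and $\Coker_{\cH_\cD}\varepsilon$ and to recognise them as the kernel/cokernel data $(\Ker_{\cH_\cD}\partial,\Coker_{\cH_\cD}\partial)$ of a single map $\partial$ in $\cE$; the four-term exact sequence $0\to\Ker\partial\to E_{-1}\to E_0\to\Coker\partial\to 0$ and property (4) are exactly what make both the membership $U\in\cT^{\leq 0}$ and the orthogonality of $V$ work out.

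Finally, once $\cT$ is a $t$-structure the bounds give $\cD^{\leq -2}\subseteq\cT^{\leq 0}\subseteq\cD^{\leq 0}$, and I would identify the intersection of hearts: $\cE\subseteq\cT^{\leq 0}$ (take the $\cD^{\leq -2}$- and $\cE[1]$-factors zero), while $\Hom(\cT^{\leq 0},\cE[-1])=0$ gives $\cE\subseteq\cT^{\geq 0}$, so $\cE\subseteq\cH_\cD\cap\cH_\cT$; conversely the cohomological description ($H_\cD^{\geq 1}=0$, $H_\cD^0\in\cE$) forces any object of $\cH_\cD\cap\cH_\cT$ into $\cE$, whence $\cE=\cH_\cD\cap\cH_\cT$. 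Since $\cE$ cogenerates $\cH_\cD$ by hypothesis and $\cC=D(\cH_\cD)$, condition (ii) of Definition~\ref{D:ntpt} is satisfied, so $(\cD,\cT)$ is a $2$-tilting pair of $t$-structures.
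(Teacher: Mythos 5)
Much of your frame is sound and close to the paper's: the right adjoint $t$ of $i$ built from copresentations, the reduction to $C\in\cD^{\leq 0}$, the facts that every $C\in\cT^{\leq 0}$ has $H^{\geq 1}_\cD(C)=0$ and $H^0_\cD(C)\in\cE$ (your shift-stability argument and your identification $\cE=\cH_\cD\cap\cH_\cT$ are correct), and the final appeal to Theorem~\ref{Th:n--tilt}. The genuine gap is in the construction of the approximation triangle. You justify the existence of a lift $itH^0_\cD(C)\to C$ of the counit by the vanishing $\Hom(\cD^{\geq 0},\cD^{\leq -2})=0$, but this vanishing is false: you have the orthogonality backwards. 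A $t$-structure gives $\Hom(\cD^{\leq 0},\cD^{\geq 1})=0$, while Hom groups in the direction you need are higher extension groups, e.g.\ $\Hom_{\cC}(M[0],N[2])=\Ext^2_{\cH_\cD}(M,N)$ with $M[0]\in\cD^{\geq 0}$ and $N[2]\in\cD^{\leq -2}$; these are nonzero exactly in the situations this theorem addresses (already in the paper's example $\cE={\mathrm{add}}(R)$ inside $(\cE\lcoh)^\circ$ with $R=k[x,y]$, since $\Ext^2_R(k,R)\neq 0$). Moreover the step itself fails, not merely its justification: pick $E\in\cE$, $N\in\cH_\cD$ and $0\neq\xi\in\Ext^2_{\cH_\cD}(E,N)$, and let $C$ be defined by the triangle $N[1]\to C\to E[0]\stackrel{\xi}\to N[2]$. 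Then $C\in\cD^{[-1,0]}$, $H^0_\cD(C)=E$, $H^{-1}_\cD(C)=N$, the counit is $\mathrm{id}_E$, and a lift $E[0]\to C$ of it would split the triangle, forcing $\xi=0$. A secondary problem with the same step: even when a lift exists, coning it off only kills the part of $\Hom(E,C)$ seen by $H^0_\cD$, not the classes factoring through $\delta^{\leq -1}_\cD C$ (e.g.\ coming from $\Ext^1_{\cH_\cD}(E,H^{-1}_\cD C)$); orthogonality to $\cT^{\leq 0}$ is not a degreewise condition on $\cD$-cohomology, so a degree-by-degree correction scheme must track how the two degrees interact.

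This obstruction is precisely what the paper's proof is engineered to avoid, and the cure is to correct the \emph{presentation} of $C$ rather than to map objects of $\cE$ into $C$. The paper writes $C\in\cD^{[-1,0]}$ as the cone of a morphism $f\colon A\to B$ in $\cH_\cD$, uses the cogeneration hypothesis to embed $A\hookrightarrow E'$ with $E'\in\cE$, notes that $C$ is also the cone of the pushout $\overline{f}\colon E'\to E'\oplus_A B$, and defines $\tau^{\leq 0}(C)$ to be the cone of $E'\to t(E'\oplus_A B)$, which lies in $\cE\star\cE[1]\subseteq\cT^{\leq 0}$. The cone of $\tau^{\leq 0}(C)\to C$ is then the cone of the counit $it(E'\oplus_A B)\to E'\oplus_A B$ of a single heart object, and its orthogonality to $\cT^{\leq 0}$ is verified using only Hom-vanishings in the legitimate direction together with the adjunction and the extension-closure of $\cE$. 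All the work happens on the source side of the presentation, where cogeneration gives room, and no $\Ext^2$-obstruction ever appears. As written, your central construction cannot be carried out; restructured along these lines, it becomes the paper's proof.
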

%
\begin{proof}
We will prove in Lemma~\ref{L:ec} that
$\cT^{\leq 0}$ is extension closed;
 $\cT^{\leq 0}[1]\subseteq \cT^{\leq 0}$
since the suspension of any factor is contained in a factor.
By definition $\cD^{\leq -2}\subseteq \cT^{\leq 0}$,
since any factor is contained in $\cD^{\leq 0}$ (which is extension closed)
we have $\cT^{\leq 0}\subseteq \cD^{\leq 0}$.
Let us prove that the functor 
$i_{\cT^{\leq 0}}:\cT^{\leq 0}\to D(\cA)$ has a  right adjoint 
$\tau^{\leq 0}:D(\cA)\to\cT^{\leq 0}$. 
We will first define the restriction $\tau^{\leq 0}_{|\cD^{[-1,0]}}$,
next
we will prove how to extend this  functor
to the whole $D(\cA)$. 
Let us notice that the functor $i:\cE\to\cA$ has a right adjoint  $t$ defined as in 
Lemma~\ref{L:eq1ttc}:
for any $A\in\cA$ let consider a copresentation
$0\to A\to X_1\stackrel{f}\to X_2$ and let us pose
$t(A)=\Ker_{\cE}(f)$.
We note that for any
$L\in\cD^{\leq -2}\star \cE \star\cE[1]$
we have $H^0_\cD(L)\in\cE$ (since it is a cokernel
in $\cA$ between two objects in $\cE$).
Let $A\in\cA$,  for any $M\in\cT^{\leq 0}$
we have
$D(\cA)(M,A)\cong\cA(H^0_{\cD}(M),A)\cong \cA(H^0_{\cD}(M),t(A))\cong
D(\cA)(M,t(A))$.
So our truncation functor $\tau^{\leq 0}$ restricted
to $\cA$ coincides with $t$: $\tau^{\leq 0}_{|\cA}=t$.
Moreover the argument of \cite[Prop.~1.1]{KeVo} proves that
the mapping cone of the morphism $t(A) \to A$ belongs to $\cT^{\geq 1}$.
We remark that as in \cite[Prop.~1.3.3]{BBD} even if we have to choose a morphism 
in order to define this functor,
the functoriality of the construction is guaranteed by  the fact that for another choice
there exists a unique isomorphism compatible with this construction.

Let us now compute the restriction of $\tau^{\leq 0}$ to $\cD^{[-1,0]}$. 
Given any object in $D\in\cD^{[-1,0]}$ 
there exists $f:A\to B$ in $\cA$ such that 
$D$ is the mapping cone
of the morphism $f:A[0]\to B[0]$ in $D(\cA)$.
Since $\cE$ is cogenerating in $\cA$ there exists an immersion
$h:A\hookrightarrow E$ with $E\in\cE$ and so
$D$ is isomorphic in $D(\cA)$ to the   mapping cone
of $\overline{f}:E\to E\oplus_A B$.
Let define $\tau^{\leq 0}(D)$ to be mapping cone of $E\to t(E\oplus_A B)$. 
Let 
consider the following commutative diagram 
whose rows and columns are distinguished triangles (obtained by applying the
octahedron  axiom to the composition $E\to t(E\oplus_A B)\to E\oplus_A B$ see also  \cite[Prop.~1.1.11]{BBD})
{\small{\[ \xymatrixrowsep{0.9em}
\xymatrix{
E\ar[r]^(0.4){t(\overline{f})} \ar[d] &t(E\oplus_A B)\ar[r]\ar[d] & 
\tau^{\leq 0}(D)\ar[r]^(0.7){+}\ar[d] & \\
E\ar[r]^(0.4){\overline{f}}\ar[d] & E\oplus_A B\ar[r]\ar[d] & 
D\ar[r]^(0.7){+}\ar[d] & \\
0\ar[r]\ar[d]^{+}&\tau^{\geq 1}(E\oplus_A B)\ar[r] \ar[d]^{+}&
\tau^{\geq 1}(E\oplus_A B) \ar[r]^(0.7){+} \ar[d]^{+}& \\
\; & \; & \; & \\
}
\] }}

Since $\tau^{\geq 1}(E\oplus_A B)$ and $\tau^{\geq 1}(E\oplus_A B)[-1]$ belong to $\cT^{\geq 1}$;
for any $M\in \cT^{\leq 0}$ the long exact sequence associated to the right vertical distinguished triangle 
implies $D(\cA)(M,\tau^{\leq 0}(D))\cong D(\cA)(M,D)$.
Moreover $\tau^{\geq 1}(D)\cong \tau^{\geq 1}(E\oplus_A B) \in \cT^{\geq 1}$.

Now we are able to compute the truncation $\tau^{\leq 0}(D)$
for any $D\in\cD^{[-1,0]}$.  Since $\cT^{\leq 0}\subseteq \cD^{\leq 0}$
we have $\tau^{\leq 0}(X)\cong \tau^{\leq 0}(\delta^{\leq 0}(X))$
for any $X\in D(\cA)$ (one can see by the octahedron axiom that
the mapping cone of the composition 
$ \tau^{\leq 0}(\delta^{\leq 0}(X))\to\delta^{\leq 0}(X)\to X$ lyes in $\cT^{\geq 1}$).
Given $C\in\cD^{\leq 0}$; the
following commutative diagram (whose rows and columns are
distinguished triangles)
{\small{\[\xymatrixrowsep{0.8em}
\xymatrix{
\delta^{\leq -2}(C)\ar[r] \ar[d] & \tau^{\leq 0}(C)\ar[r]\ar[d] & 
\tau^{\leq 0}(\delta^{[-1,0]}(C))\ar[r]^(0.7){+}\ar[d] & \\
\delta^{\leq -2}(C)\ar[r] \ar[d]& C\ar[r]\ar[d] & \delta^{[-1,0]}(C)\ar[r]^(0.7){+}\ar[d] & \\
0\ar[r]\ar[d]^{+} & \tau^{\geq 1}(\delta^{[-1,0]}(C))\ar[r] \ar[d]^{+}& 
\tau^{\geq 1}(\delta^{[-1,0]}(C))\ar[r]^(0.7){+} \ar[d]^{+}& 
\\
\; & \; & \; & \\
}
\]}}permits us to compute  $\tau^{\leq 0}(C)$ for any $C\in\cD^{\leq 0}$.
We recall that whenever the  two rows and any column of such a digram are distinguished 
the third row is distinguished too \cite{BBD}.
The functoriality of this construction is guaranteed by the orthogonality of 
the classes $\cT^{\leq 0},\cT^{\geq 1}$.

Let us prove that $\cE=\cA\cap\cH_\cT$; we recall that
$\cA\simeq \cH_\cD$.
Let consider $A^\point\in\cA\cap\cH_\cT$,
hence $A^\point \in \cT^{\leq 0}=\cD^{\leq -2}\star\cE\star\cE[1]$ and so 
it fits into a distinguished triangle
$B^\point \to A^\point \to E^{[-1,0]}{\stackrel{+}\to}$ for suitable
$B^\point\in\cD^{\leq -2}$ and $ E^{[-1,0]}\in\cE\star\cE[1]$; but
since $A^\point\in\cD^{\geq 0}$
we deduce that $B^\point\in \cD^{\leq -2}\cap\cD^{\geq 0}={0}$ so
 $A^\point\in  \cE\star\cE[1]$.
Therefore $A^\point= [E^{-1}\stackrel{d}
\to\stackrel{\point}{E^0}]$
and $A^\point \cong H^0_\cD(A^\point)$ since $A^\point\in\cA\simeq\cH_\cD$, so  by
point (4) of Definition~\ref{D:2tiltorcl} we obtain $A^\point\in\cE$ which proves that
 $\cT^{\leq 0}\cap\cD^{\geq 0}=\cE$. 
  We can apply the
Tilting Theorem~\ref{Th:n--tilt} ($\cE$ cogenerates $\cA$) thus obtaining that $(\cD,\cT)$ 
is a $2$-tilting pair of $t$-structures.
\end{proof}
\begin{lemma}\label{L:ec}
Let $\cE$ be a $2$-tilting torsion class in an abelian category 
$\cA$. The full subcategory 
$\cT^{\leq 0}:=\cD^{\leq -2}\star \cE\star \cE[1]$
of $D(\cA)$ is  closed under extensions.
\end{lemma}
\begin{proof}
Let us denote by $\cD$ the natural $t$-structure on $D(\cA)$.
We refer to Appendix~\ref{B2.1} for the definition of $\star$.

{\it Step 1.}
Let us prove that $\cE[1]\star \cE\subseteq  \cE\star \cE[1]$.
Any $X^\point\in \cE[1]\star \cE\subseteq D^{[-1,0]}(\cA)$ 
can be represented by a complex $E^\point\in K^{\geq -1}(\cE)$
(since $\cE$ cogenerates $\cA$);
hence $E^\point$ fits in the distinguished triangle 
$H^{-1}_{\cD}(E^\point)[1]\to E^\point \to H^{0}_{\cD}(E^\point)\stackrel{+}\to $.
Since this is the unique triangle realizing $E^\point\in \cA[1]\star \cE$, we have
$H^{-1}_{\cD}(E^\point)=\Ker_{\cA}(d^{-1}_{E^\point})\in \cE$
and $H^0_{\cD}(E^\point)={\Ker_{\cA}(d^{0}_{E^\point})\over \im_{\cA}(d^{-1}_{E^\point})}\in \cE$.
The short exact sequence 
$0\to \Ker_{\cA}(d^{-1}_{E^\point})\to E^{-1}\to \im_{\cA}(d^{-1}_{E^\point})\to 0$
implies that $ \im_{\cA}(d^{-1}_{E^\point})\in\cE$ (by property (4) of 
Definition~\ref{D:2tiltorcl}) and
so $\Ker_{\cA}(d^{0}_{E^\point})\in \cE$ (since it is an extension of objects in $\cE$).
This proves that 
$X^\point\cong [E^{-1}\to \stackrel{\point}{\Ker_{\cA}(d^{0}_{E^\point})}]\in\cE\star\cE[1]$.

{\it Step 2.}
Thanks to the previous step,
in order to conclude the proof it is enough to prove that
$(\cE\star\cE[1])\star \cD^{\leq -2}\subseteq \cT^{\leq 0}$.
Let $Z^\point\in (\cE\star\cE[1])\star \cD^{\leq -2}\subseteq \cD^{\leq 0}$
hence $Z^\point$ fits in a distinguished triangle
$[E^{-1}\to \stackrel{\point}{E^0}] \to Z^\point \to Y^\point {\stackrel{+}\to}$
with $Y^\point\in K^{\leq -2}(\cA)$.
We can represent $Z^\point=[\cdots Y^{-3}\stackrel{d^{-3}}\to Y^{-2} \stackrel{d^{-2}}\to
E^{-1}\stackrel{d^{-1}}\to \stackrel{\point}{E^0}\to 0\to \cdots]\in K^{\leq 0}(\cA)$.
Let us prove that $\delta^{\geq -1}Z^\point\in \cE\star\cE[1]$.
Since  $\cE$ cogenerates $\cA$ there exists $F^{-1}\in\cE$ such that
$\Coker d^{-2}_{Z^\point}\stackrel{i}\hookrightarrow F^{-1}$; hence
 the complex
$\delta^{\geq -1}Z^\point=[\cdots 0\to  \Coker d^{-2}_{Z^\point}\to
\stackrel{\point}{E^{0}}\to 0\to \cdots ]\cong
[\cdots 0\to  F^{-1}\to
\stackrel{\point}{F^{-1}\oplus_{E^{-1}}{E^0}}\to 0\to \cdots ]\in\cE\star\cE[1]
$ since $F^{-1}\in \cE$ and by property (4) of 
Definition~\ref{D:2tiltorcl} ${F^{-1}\oplus_{E^{-1}}{E^0}}\in\cE$ too.
Therefore $Z^\point\in\cT^{\leq 0}$ since the triangle
$\delta^{\leq -2}Z^\point\to Z^\point\to \delta^{\geq -1}Z^\point\stackrel{+}\to $ is distinguished.
\end{proof}
%

\begin{remark}
Theorem~\ref{T:3.5} admits a dual version:
given $\cT$ a $t$-structure on $D(\cH_\cT)$ and $j:\cE\to \cH_\cT$ a
$2$-cotilting torsion-free class on $\cH_\cT$,
the class $\cD^{\geq 0}:=\cE[-1]\star\cE\star \cT^{\geq 2}$
is a co-aisle in $D(\cH_\cT)$ such that $\cH_\cT\cap\cH_\cD=\cE$.
Moreover the
pair $(\cD,\cT)$ is a $2$-tilting pair of $t$-structures.
\end{remark}

\begin{definition}\label{D:2qa}
A \emph{$2$-quasi-abelian category} is the data  
$(\cE,{\cE}x)$ of an additive category $\cE$ with a Quillen  exact structure ${\cE}x$ such that
$\cE$
admits kernels and cokernels.
\end{definition}

\begin{remark}\label{R:ttc-2qa}
Clearly any $1$-quasi-abelian category is also
$2$-quasi-abelian.
Any $2$-tilting torsion class $\cE$ is a $2$-quasi-abelian category since by 
Definition~\ref{D:2tiltorcl} (3) it admits kernels and by (4) it admits cokernels.
\end{remark}

 Let us start by studying the case of a $2$-quasi-abelian category $(\cE,{\cE}x_{\rm split})$
whose Quillen exact structure is the minimal one
(i.e., any conflation splits).

\begin{proposition}\label{P:2assts}
Let $\cE$ be an additive category admitting kernels and cokernels.
The category $K(\cE)$ admits 
a canonical $2$-tilting  pair of $t$-structures $(\cR\cK_\cE,\cL\cK_\cE)$
such that $\cE=\cR\cK(\cE)\cap\cL\cK(\cE)$ and so
$\cE\hookrightarrow \cR\cK(\cE)$ is a $2$-tilting torsion class
while $\cE\hookrightarrow \cL\cK(\cE)$ is a $2$-cotilting torsion-free class.
\end{proposition}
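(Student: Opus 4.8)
Proposition~\ref{P:2assts} — proof plan.

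The plan is to set $\cD:=\cR\cK_\cE$ and $\cT:=\cL\cK_\cE$ and to verify Definition~\ref{D:ntpt} for $n=2$. Since $\cE$ is $2$-quasi-abelian it admits kernels and cokernels, so the constructions of \ref{num:lefttstr} apply verbatim: $\cR\cK_\cE$ and $\cL\cK_\cE$ are $t$-structures on $K(\cE)$, the gap inclusion $\cR\cK_\cE^{\leq -2}\subseteq\cL\cK_\cE^{\leq 0}\subseteq\cR\cK_\cE^{\leq 0}$ holds (this is condition~(1)), and $\cE=\cR\cK(\cE)\cap\cL\cK(\cE)$. Everything therefore reduces to condition~(2), which I would establish in the form~(ii): that $\cE$ cogenerates $\cR\cK(\cE)$ and that $K(\cE)\cong D(\cR\cK(\cE))$.

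First I would record, from the truncation functors of \ref{num:lefttstr}, that every object of $\cR\cK(\cE)$ is isomorphic in $K(\cE)$ to a two-term complex $[E^0\to E^1]$ in degrees $0,1$, while $I_\cR$ embeds $\cE$ in degree $0$; dually objects of $\cL\cK(\cE)$ are of the form $[E^{-1}\to E^0]$ in degrees $-1,0$. Cogeneration is then witnessed by the chain map $[E^0\to E^1]\to I_\cR(E^0)$ which is the identity in degree $0$ and zero in degree $1$: its mapping cone computes to $I_\cR(E^1)$, which lies in the heart, so the map is a monomorphism and produces a short exact sequence $0\to[E^0\to E^1]\to I_\cR(E^0)\to I_\cR(E^1)\to 0$ in $\cR\cK(\cE)$ with the two right-hand terms in $\cE$. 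The dual computation gives an epimorphism $I_\cL(E^0)\twoheadrightarrow[E^{-1}\to E^0]$, so $\cE$ also generates $\cL\cK(\cE)$.

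The heart of the matter is the equivalence $K(\cE)\cong D(\cR\cK(\cE))$, and here the decisive observation is that for $E_1,E_2\in\cE$, viewed as complexes concentrated in degree $0$, one has $\Hom_{K(\cE)}(E_2,E_1[1])=0$, since a chain map from a degree-$0$ complex into one concentrated in degree $-1$ is necessarily zero. Hence every short exact sequence $0\to E_1\to E\to E_2\to 0$ in $\cR\cK(\cE)$ (equivalently in $\cL\cK(\cE)$) with terms in $\cE$ splits, so the Quillen exact structure $(\cE,{\cE}x)$ furnished by \ref{n:lista}(4) is the minimal (split) one; its acyclic complexes are exactly the contractible ones, whence $\cN_{{\cE}x}=0$ in $K(\cE)$ and $K(\cE)=K(\cE)/\cN_{{\cE}x}=D(\cE,{\cE}x)$. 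At this stage $\cE$ satisfies the four conditions of Definition~\ref{D:2tiltorcl} inside the abelian category $\cR\cK(\cE)$: cogeneration has just been shown, extension-closure is recorded in \ref{num:lefttstr}, kernels exist because $\cE$ is $2$-quasi-abelian, and condition~(4) is precisely \ref{n:lista}(2) applied to the pair $(\cR\cK_\cE,\cL\cK_\cE)$, whose gap inclusion is already in hand. Tilting the natural $t$-structure on $D(\cR\cK(\cE))$ along this $2$-tilting torsion class by Theorem~\ref{T:3.5} (which itself invokes the $n$-Tilting Theorem~\ref{Th:n--tilt}) then yields $D(\cR\cK(\cE))\cong K(\cE)/\cN_{{\cE}x}=K(\cE)$, compatibly with $\cR\cK_\cE$ and the natural $t$-structure.

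With cogeneration and this equivalence both in place, Definition~\ref{D:ntpt}(ii) is satisfied, so $(\cR\cK_\cE,\cL\cK_\cE)$ is a $2$-tilting pair; Proposition~\ref{P:2t} then realizes $\cE$ as a $2$-tilting torsion class in $\cR\cK(\cE)$ and as a $2$-cotilting torsion-free class in $\cL\cK(\cE)$, which is the final assertion. I expect the main obstacle to be exactly the passage $K(\cE)\cong D(\cR\cK(\cE))$: one must be sure that no genuine localization is needed, that is, that the null system $\cN_{{\cE}x}$ is trivial, and this is what the vanishing $\Hom_{K(\cE)}(E_2,E_1[1])=0$ together with the consequent minimality of the induced exact structure are meant to deliver.
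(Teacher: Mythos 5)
Your overall strategy---verify that $\cE$ is a $2$-tilting torsion class in $\cR\cK(\cE)$, observe that the exact structure induced on $\cE$ is the split one so that $\cN_{{\cE}x}=\{0\}$ and $D(\cE,{\cE}x)=K(\cE)$, then invoke Theorem~\ref{T:3.5}---is coherent and non-circular, and your key vanishing $\Hom_{K(\cE)}(E_2,E_1[1])=0$ is correct. But the step on which your verification of cogeneration rests is false: it is \emph{not} true that every object of $\cR\cK(\cE)$ is isomorphic in $K(\cE)$ to a two-term complex $[E^0\to E^1]$ (nor dually for $\cL\cK(\cE)$). The truncation functors of \ref{num:lefttstr} produce \emph{three}-term representatives, $\tau_{\cR}^{\leq 0}\tau_{\cR}^{\geq 0}X^\point=[X^0\to X^1\to\Coker_\cE (d^0)]$ in degrees $0,1,2$, and in $K(\cE)$, where isomorphism means homotopy equivalence and no localization is available to contract strict exact sequences (contrast the derived category of a $1$-quasi-abelian category), these cannot in general be shortened. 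Concretely, a two-term complex $[E^0\stackrel{d}\to E^1]$ in degrees $0,1$ lies in $\cR\cK_\cE^{\leq 0}={}^{\perp}(K^{\geq 1}(\cE))$ if and only if $d$ is an epimorphism: testing against $C[-1]$ with $C\in\cE$ gives $\Hom_{K(\cE)}([E^0\to E^1],C[-1])=\{g:E^1\to C\mid gd=0\}$. Equivalently, by Corollary~\ref{C:L=rcoh} your claim would force every coherent functor to have projective dimension $\leq 1$, i.e. ${\rm gl.dim }(\rcoh\cE)\leq 1$, which by Proposition~\ref{P1.3} fails as soon as $\cE$ contains a kernel that is not a split monomorphism (e.g. $\cE=\add(k[x,y])$). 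So your short exact sequence $0\to[E^0\to E^1]\to I_\cR(E^0)\to I_\cR(E^1)\to 0$ embeds only those heart objects that happen to be two-term complexes with epimorphic differential; cogeneration of $\cR\cK(\cE)$ by $\cE$---precisely the hypothesis needed to run Theorem~\ref{T:3.5}---is not established, and the dual generation claim for $\cL\cK(\cE)$ has the same defect.

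The gap is repairable without changing your plan: take the correct representative $N=[X\stackrel{d}\to Y\to\Coker_\cE(d)]$; the cone of the degreewise-identity map $N\to X[0]$ is homotopy equivalent to $[Y\to\Coker_\cE(d)]$, which \emph{does} lie in the heart since its differential is an epimorphism, and the resulting triangle of heart objects gives a short exact sequence $0\to N\to X[0]\to[Y\to\Coker_\cE(d)]\to 0$ in $\cR\cK(\cE)$; this is exactly the dual of the paper's triangles \eqref{E:DT}. Once repaired, your route is a genuine alternative to the paper's: the paper never passes through Theorem~\ref{T:3.5}, but instead proves that $\cE$ is the class of projective objects of $\cL\cK(\cE)$ (your splitting observation in the stronger form that any short exact sequence ending in an object of $\cE$ splits), assembles from \eqref{E:DT} a projective resolution $0\to\Ker_\cE(d)\to X\to Y\to C(d)\to 0$ of length $2$ for every heart object, and concludes $K(\cE)\cong D(\cL\cK(\cE))$ directly from Lemma~\ref{L2.2}. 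The paper's argument is thus more self-contained, whereas yours leans on the heavier machinery of Theorem~\ref{T:3.5} and the $n$-Tilting Theorem; in exchange, yours isolates the useful fact that the exact structure induced on $\cE$ by the hearts is the minimal one, which is only implicit in the paper.
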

%
\begin{proof}
We can endow $\cE$ with its minimal Quillen exact structure (split short exact sequences).
So $(\cE,{\cE}x_{\rm split})$ is a 2-quasi-abelian category whose derived category
$D(\cE,{\cE}x_{\rm split})=K(\cE)$.
In \ref{num:lefttstr} we provided the construction of the left and right $t$-structures on 
$K(\cE)$ for $\cE$ a $1$-quasi-abelian category. 
This construction is based on the existence of kernels and cokernels and so it works
unchanged thus
providing the $t$-structures
$\cL\cK_\cE$ and
$\cR\cK_\cE$   on $K(\cE)$ 
whose associated truncated functors are those 
described in  \ref{num:lefttstr}.
Moreover $\cR\cK_\cE^{\leq -2}\subseteq\cL\cK_\cE^{\leq 0}\subseteq \cR\cK_\cE^{\leq 0}$.
The heart of $\cL\cK_\cE$ (resp. $\cR\cK_\cE$) is denoted by $\cL\cK(\cE)$ 
(resp. $\cR\cK(\cE)$). 
Any short exact sequence $0\to K\stackrel{\alpha}\to L\stackrel{\pi}\to E\to 0$
 in  $\cL\cK(\cE)$ with $E\in\cE$ is a  distinguished triangle in $K(\cE)$
 and it induces the exact sequence 
 $
\cK(\cE)(E,L)\to \cK(\cE)(E,E)\to \cK(\cE)(E,K[1])
 $, since $K[1]$ is a complex in $\cL\cK_\cE^{\leq 0}$ (with $0$ entries in degrees greater or equal to $0$) we get
  $\cK(\cE)(E,K[1])=0$
and so $\pi$ is split 
epimorphism. 
Hence $\cE$ coincides with the class of  projective objects in $\cL\cK(\cE)$. Moreover any object  $L\in \cL\cK(\cE)$ 
can be represented as a complex 
$L\cong C(d):=[\Ker_{\cE}(d)\stackrel{\alpha}\to X \stackrel{d}\to \stackrel{\point}Y]\in K(\cE)$
(the other entries of the complex are $0$ see Appendix~\ref{A:not})
since $L\cong \tau_{\cL}^{\geq 0}\tau_{\cL}^{\leq 0} L$
(see \ref{num:lefttstr}).
Thus $L$
has a projective resolution of at most length $2$
in the following way:
the distinguished triangles (where $ C(\alpha):=[\Ker_{\cE}(d)\stackrel{\alpha}\to \stackrel{\point}X]$)
\begin{equation}\label{E:DT}
\Ker_{\cE}(d)[0]\longrightarrow X[0]\longrightarrow C(\alpha){\stackrel{+}\to} 
\qquad
C(\alpha) \longrightarrow Y[0] \longrightarrow C(d){\stackrel{+}\to} \quad
\hbox{ in }
\cK(\cE)
\end{equation}
give the short exact sequences
\[\small{0\to \Ker_{\cE}(d)\to X\to C(\alpha) \to 0}
\quad
\small{0\to C(\alpha)\to Y \to C(d)\to 0\; ;}
\quad
\hbox{ in }
\cL\cK(\cE)
\]
from which we obtain 
the  projective resolution $0\to \Ker_{\cE}(d)\to X\to Y \to C(d)\to 0$
of $C(d)$ in $\cL\cK(\cE)$. 
By Lemma~\ref{L2.2} we have $K(\cE)\simeq D(\cL\cK(\cE))$
which proves that $(\cR\cK_\cE,\cL\cK_\cE)$ is a $2$-tilting pair of $t$-structures, hence by Proposition~\ref{P:2t}
$\cE$ is a $2$-tilting torsion (resp. $2$-cotilting torsion-free) class in $\cR\cK(\cE)$
(resp. $\cL\cK(\cE)$). In particular $\cE$ coincides with the class of injective objects in
$\cR\cK(\cE)$ (resp. projective objects in $\cL\cK(\cE)$).
\end{proof}
%


\begin{corollary}\label{C:L=rcoh}
Let $(\cE,{\cE}x_{\rm split})$ be a $2$-quasi-abelian category 
(with its minimal Quillen exact structure).
Hence 
$\cL\cK(\cE)\simeq \rcoh\cE$ and $ \cR\cK(\cE)\simeq (\cE\lcoh)^\circ $.
\end{corollary}
\begin{proof}
Since $\cE$ has kernels and cokernels it is a coherent category
(see Definition~\ref{D1.3} and Proposition~\ref{P1.3}).
Both $\rcoh\cE$ and $\cL\cK(\cE)$ are abelian categories 
whose projective objects coincide with $\cE$ and such that
any object has a projective resolution of at most length $2$.
The functor $I_{\cL}:\cE\to \cL\cK(\cE)$ extends uniquely to a functor
$I_{\cL}^c:\rcoh\cE\to \cL\cK(\cE)$ cokernel preserving
(see \ref{N:A.8}) which is an equivalence of categories
(any object in $L\in\cL\cK(\cE)$ has a projective resolution
therefore $I_{\cL}^c$ is essentially surjective and
fully faithful since any object in $\cE$ is projective in $\cL\cK(\cE)$).
Thus the left heart is equivalent to the category of right coherent functors.
The right statement follows dually.
\end{proof}
%
%
%

Let us now turn to the case of a general $2$-quasi-abelian category $(\cE,{\cE}x)$:

\begin{lemma}\label{L:toQES}
Given any $2$-quasi-abelian category $(\cE,{\cE}x)$  the left and right $t$-structu\-res on $\cK(\cE)$
induce a $2$-tilting pair of $t$-structures $(\cR\cD_{(\cE,{\cE}x)},\cL\cD_{(\cE,{\cE}x)})$ on the
derived category $D(\cE,{\cE}x)$ such that $\cE=\cR\cH(\cE,{\cE}x)\cap\cL\cH(\cE,{\cE}x)$.
\end{lemma}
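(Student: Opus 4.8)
The plan is to descend, through the Verdier localization $Q\colon K(\cE)\to D(\cE,{\cE}x)=K(\cE)/\cN_{{\cE}x}$, the canonical pair $(\cR\cK_\cE,\cL\cK_\cE)$ produced on $K(\cE)$ by Proposition~\ref{P:2assts}. Recall that this pair is $2$-tilting, with truncation functors as recalled in \ref{num:lefttstr}, with $\cR\cK_\cE^{\leq -2}\subseteq\cL\cK_\cE^{\leq 0}\subseteq\cR\cK_\cE^{\leq 0}$ and with $\cE=\cR\cK(\cE)\cap\cL\cK(\cE)$. The only general tool needed to pass a $t$-structure to a Verdier quotient is Schneiders' descent criterion (Proposition~\ref{Prop:Schn}): it suffices to verify that the null system $\cN_{{\cE}x}$ is stable under the truncation functors of both $t$-structures.

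The heart of the argument, and the step I expect to be the main obstacle, is exactly this compatibility. I would check it first for $\tau_\cL^{\leq 0}$ (the case of $\tau_\cR^{\geq 0}$ being dual, and the complementary truncations following from $\cN_{{\cE}x}$ being a null system, via the triangle $\tau_\cL^{\leq 0}E^\point\to E^\point\to\tau_\cL^{\geq 1}E^\point\+$ and the two-out-of-three property). The key observation is that for an ${\cE}x$-acyclic complex the categorical kernels and cokernels computed in $\cE$ (which define the truncations) agree with the ${\cE}x$-cycles and ${\cE}x$-boundaries. Indeed, if $d^0=m\circ e$ with $e\colon E^0\twoheadrightarrow Z^1$ a deflation and $m\colon Z^1\rightarrowtail E^1$ an inflation, then, $m$ being a monomorphism, $\Ker_\cE d^0=\Ker_\cE e=Z^0$, the last admissible mono being the categorical kernel of the conflation $Z^0\rightarrowtail E^0\twoheadrightarrow Z^1$. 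Consequently $\tau_\cL^{\leq 0}E^\point$ replaces $E^0$ by the genuine cycle $Z^0$ and keeps the deflation $E^{-1}\twoheadrightarrow Z^0$ as its last differential, so it is again ${\cE}x$-acyclic, i.e.\ lies in $\cN_{{\cE}x}$. This is the point where $\cE$ being $2$-quasi-abelian (hence admitting kernels and cokernels) meets the chosen exact structure, and it is where I would concentrate the technical care.

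Granting the compatibility, Proposition~\ref{Prop:Schn} produces the descended $t$-structures $\cR\cD_{(\cE,{\cE}x)}$ and $\cL\cD_{(\cE,{\cE}x)}$ on $D(\cE,{\cE}x)$, whose aisles are the images under $Q$ of the corresponding aisles on $K(\cE)$. The gap inequality $\cR\cD_{(\cE,{\cE}x)}^{\leq -2}\subseteq\cL\cD_{(\cE,{\cE}x)}^{\leq 0}\subseteq\cR\cD_{(\cE,{\cE}x)}^{\leq 0}$ is then inherited from $K(\cE)$ by applying $Q$; concretely the middle inclusion rests on the sequence $0\to\im_\cE(d^{-1})\to E^0\to\Coker_\cE(d^{-1})\to 0$ being a conflation, hence ${\cE}x$-exact, exactly as in the $1$-quasi-abelian discussion of \ref{num:lefttstr}. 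That $\cE=\cR\cH(\cE,{\cE}x)\cap\cL\cH(\cE,{\cE}x)$ follows as in Theorem~\ref{T:3.5}: the embeddings $I_\cR,I_\cL$ of \ref{num:lefttstr} descend along $Q$, so every object of $\cE$ lands in both hearts, while conversely any object of the intersection is represented by a two-term complex $[E^{-1}\to E^0]$ and is forced into $\cE$ by the kernel/cokernel computations of point (2) of \ref{n:lista}.

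It remains to certify the $2$-tilting property. Since the pair satisfies the gap condition with intersection $\cE$, point (3) of \ref{n:lista} already gives that a complex of $K(\cE)$ is acyclic in $\cR\cH(\cE,{\cE}x)$ if and only if it is acyclic in $\cL\cH(\cE,{\cE}x)$. Moreover an ${\cE}x$-conflation between objects of $\cE$ yields a distinguished triangle in $D(\cE,{\cE}x)$ all of whose vertices lie in the heart, hence a short exact sequence in $\cR\cH(\cE,{\cE}x)$; and conversely $\Hom_{D(\cE,{\cE}x)}(E,E'[1])\cong\Ext^1_{{\cE}x}(E,E')$ shows that every short exact sequence of the heart with terms in $\cE$ comes from an ${\cE}x$-conflation. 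Thus the exact structure induced on $\cE$ by $\cR\cH(\cE,{\cE}x)$ (point (4) of \ref{n:lista}) coincides with ${\cE}x$, and $\cN_{{\cE}x}$ is precisely the null system of complexes acyclic in the heart. As $\cE$ cogenerates $\cR\cH(\cE,{\cE}x)$ (copresentations descend from $K(\cE)$ along $Q$), the $n$-Tilting Theorem~\ref{Th:n--tilt} with $n=2$ yields $D(\cE,{\cE}x)\cong D(\cR\cH(\cE,{\cE}x))\cong D(\cL\cH(\cE,{\cE}x))$, so $(\cR\cD_{(\cE,{\cE}x)},\cL\cD_{(\cE,{\cE}x)})$ is $2$-tilting in the sense of Definition~\ref{D:ntpt}.
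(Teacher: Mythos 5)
There is a genuine gap, and it sits exactly where you predicted the main obstacle would be. Your reduction of Proposition~\ref{Prop:Schn} to ``$\cN_{{\cE}x}$ is stable under the truncation functors'' misstates the descent criterion. What Proposition~\ref{Prop:Schn} actually requires is much stronger: for \emph{every} distinguished triangle $Y^\point \to X^\point \to N^\point \+$ with $Y^\point \in \cL\cK_\cE^{\geq 1}$, $X^\point \in \cL\cK_\cE^{\leq 0}$ and $N^\point \in \cN_{{\cE}x}$, \emph{both} $Y^\point$ and $X^\point$ must lie in $\cN_{{\cE}x}$. Here $X^\point$ is an arbitrary complex of $K^{\leq 0}(\cE)$ and $Y^\point$ an arbitrary object of the co-aisle; neither is a truncation of an acyclic complex, so your (correct) computation that $\tau_\cL^{\leq 0}$ of an ${\cE}x$-acyclic complex is again ${\cE}x$-acyclic does not touch them. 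Truncation-stability does follow from Schneiders' condition (rotate the approximation triangle of an object of $\cN$), but the converse fails in general: in $\cC = D^b(\mathrm{rep}\,A_2)$ (quiver $1\to 2$), let $\cN$ be the thick subcategory of complexes whose cohomologies lie in $\add(P_1)$, with $P_1$ the projective--injective. Then $\cN$ is thick and stable under the truncations of the natural $t$-structure, yet rotating $0\to S_2\to P_1\to S_1\to 0$ gives a triangle $S_1[-1]\to S_2\to P_1\+$ with $S_1[-1]$ in the co-aisle shifted by one, $S_2$ in the aisle, $P_1\in\cN$, and $S_2\notin\cN$; so by the ``only if'' part of Proposition~\ref{Prop:Schn} the $t$-structure does not descend. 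The paper's proof consists precisely of the verification you skipped: an explicit diagram argument which uses that the mapping cone $N^\point$ agrees with $X^\point$ in sufficiently negative degrees and with a shift of $Y^\point$ in sufficiently positive degrees, and then propagates conflations inward to decompose \emph{both} $X^\point$ and $Y^\point$ as ${\cE}x$-acyclic complexes.

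The closing step is also not valid as written. Theorem~\ref{Th:n--tilt} has as hypothesis that the ambient triangulated category is $D(\cA)$ with $\cD$ its natural $t$-structure and $\cT$ a $t$-structure on that same $D(\cA)$; to invoke it with $\cA=\cR\cH(\cE,{\cE}x)$ you would already need the equivalence $D(\cE,{\cE}x)\cong D(\cR\cH(\cE,{\cE}x))$ (or a tilted $t$-structure constructed on $D(\cR\cH(\cE,{\cE}x))$, as in Theorem~\ref{T:3.5}), and that equivalence is exactly the content of condition (ii) of Definition~\ref{D:ntpt} you are trying to certify --- the argument is circular. The paper closes this step by a different route: it shows that $\cE$ sits in $\cL\cH(\cE,{\cE}x)$ as a generating subcategory, that kernels of maps of $\cE$ computed in $\cL\cH(\cE,{\cE}x)$ stay in $\cE$ (whence $\cN_{{\cE}x}=\cN_{\cL\cH(\cE,{\cE}x)}\cap K(\cE)$), that its exact structure is the one induced by the heart, and that every object of the left heart admits an $\cE$-resolution of length at most $2$; then Kashiwara--Schapira's Proposition~\ref{KS} yields $K(\cE)/\cN_{{\cE}x}\cong D(\cL\cH(\cE,{\cE}x))$ directly, which is condition (iii) of Definition~\ref{D:ntpt}. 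Your identification of the induced exact structure on $\cE$ with ${\cE}x$ is a genuinely useful ingredient of that argument, but without the resolution step and Proposition~\ref{KS} (or an equivalent substitute) the derived equivalence, and hence the $2$-tilting property, remains unproven.
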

%
\begin{proof}
Let us denote by $\cN_{{\cE}x}$ 
the null system  of acyclic complexes with respect to 
 $(\cE,{\cE}x)$ (see Definition~\ref{D3.1}).
Let us prove that the $t$-structure 
$\cL\cK_\cE$ on $K(\cE)$ satisfies the hypothesis of 
Proposition~\ref{Prop:Schn} thus inducing (passing trough the quotient)
the $t$-structure $\cL\cD_{(\cE,{\cE}x)}$ on $D(\cE,{\cE}x):=K(\cE)/\cN_{{\cE}x}$.
We have to prove that
given any distinguished triangle
$Y^\point \to X^\point \to N^\point {\stackrel{+}\to}$ in $K(\cE)$
such that $Y^\point \in \cL\cK_\cE^{\geq 1}$,
$X^\point \in  \cL\cK_\cE^{\leq 0} $
and $N^\point\in \cN_{{\cE}x}$
we have $Y^\point, X^\point \in\cN_{{\cE}x}$.
We can suppose $Y^\point=\tau_{\cL}^{\geq 1}Y^\point$
and $X^\point\in K^{\leq 0}(\cE)$.
Let consider the following commutative diagram:
{\tiny{\[
\xymatrixcolsep{0.01in}\xymatrixrowsep{0.7em}\xymatrix{
 \ar[r]& 0\ar[rr]\ar[d] & &  \Ker(d^0_Y)\ar[rr]\ar[d] & & 
Y^0\ar[rr]^{d_Y^0}\ar[d] &&Y^{1}\ar[d]   \\
 \ar[r] &X^{-2}\ar[rr]^{d_X^{-2}}\ar[d] & & 
X^{-1}\ar[rr]^{d_X^{-1}}\ar[d]& & X^0\ar[rr]\ar[d]& &0\ar[d] 
 \\
 \ar[r]  &\Ker(d^0_Y)\oplus X^{-2}\ar[rr]\ar@{->>}[rd]& & Y^0\oplus X^{-1}\ar[rr] \ar@{->>}[rd]
& &Y^1\oplus X^0\ar[rr]\ar@{->>}[rd]
&& Y^2  \\
 \im(d^{-3}_X)\ar@{>->}[ru] 
& &  \Ker(d^{0}_Y)\oplus \Coker(d^{-3}_X)\ar@{>->}[ru] & & 
\Ker(d^1_Y)\oplus X^0\ar@{>->}[ru]&
& \Ker(d^2_Y) \ar@{>->}[ru]&   \\
}
\]}}one has to start looking the last row,
for $i\leq -3$ we have $N^i=X^i$ while for $j\geq 1$
we have $N^j=Y^{j+1}$ so we can write $\im(d^{-3}_X)$
on the left and $\Ker(d^2_Y)$ on the right, hence we complete taking
resp. the cokernel and the kernel and we are able to decompose
$Y^\point$ and $X^\point$ via conflations.
The following pullback diagram 
$$
{\small{\xymatrixrowsep{1.5em}\xymatrix{
\Ker(d^{0}_Y)\oplus \Coker(d^{-3}_X)\ar@{>->}[r] &
Y^0\oplus X^{-1}\ar@{->>}[r]& \Ker(d^1_Y)\oplus X^0\\
\Ker(d^{0}_Y)\oplus \Coker(d^{-3}_X)\ar[u]\ar@{>->}[r] &
\Ker(d^{0}_Y)\oplus X^{-1}\ar[u]\ar@{->>}[r]& X^0\ar[u]\\
}}}
$$ 
permits to conclude that $\xymatrix{
\Coker(d^{-3}_X)\ar@{>->}[r] &X^{-1}\ar@{->>}[r]& X^0 }$
is a conflation, hence
 $X^\point  \in\cN_{{\cE}x}$ which implies that
$Y^\point \in\cN_{{\cE}x}$ too.

Therefore we obtain a pair of $t$-structures
$(\cR\cD_{(\cE,{\cE}x)},\cL\cD_{(\cE,{\cE}x)})$ on  $D(\cE,{\cE}x)$ 
such that 
$\cR\cD_{(\cE,{\cE}x)}^{\leq -2}\subseteq \cL\cD_{(\cE,{\cE}x)}^{\leq 0}
\subseteq \cR\cD_{(\cE,{\cE}x)}^{\leq 0}$.

Clearly $\cE\subseteq \cR\cH(\cE,{\cE}x)\cap\cL\cH(\cE,{\cE}x)$.
If $E^\point\in \cR\cH(\cE,{\cE}x)\cap\cL\cH(\cE,{\cE}x)$, we can suppose
$E^\point\in K^{\leq 0}(\cE)$ and,   since it belongs to $ \cR\cH(\cE,{\cE}x)$, we have that
$E^\point \to \tau_{\cR}^{\geq 0}E^\point=\Coker_{\cE}d^{-1}_{E^\point}[0]\in\cE$
is a quasi-isomorphism (i.e., its mapping cone belongs to $\cN_{{\cE}x}$)
and so $E^\point\in\cE$.

It remains to prove that the derived category of the heart is equivalent to
$D(\cL\cH(\cE,{\cE}x))\simeq D(\cE,{\cE}x)=:K(\cE)/\cN_{{\cE}x}$.
Now $\cE$ is a full subcategory of $\cL\cH(\cE,{\cE}x)$ and a sequence
$S:0\to E_1\to E \to E_2\to 0$ is exact for the Quillen exact structure $(\cE,{\cE}x)$
if and only if the triangle $E_1[0]\to E[0]\to E_2[0]{\stackrel{+}\to}$ is distinguished in
$D(\cE,{\cE}x)$, hence (since any term is in $\cL\cH(\cE,{\cE}x)$)
if and only if $S$ is exact in $\cL\cH(\cE,{\cE}x)$.
We note that given any morphism $f:E\to F$ in $\cE$ 
we have $\Ker_{\cL\cH(\cE,{\cE}x)}(f)=H^{0}_{\cL\cH(\cE,{\cE}x)}
([\stackrel{\point}E\to F])\in \cE$ (due to the inclusion 
$\cR\cD_{(\cE,{\cE}x)}^{\leq -2}\subseteq \cL\cD_{(\cE,{\cE}x)}^{\leq 0}$)
and so $\Ker_{\cL\cH(\cE,{\cE}x)}(f)=\Ker_{\cE} (f)$.
Hence any 
 complex in $K(\cE)$ which is acyclic in $D(\cL\cH(\cE,{\cE}x))$ 
 can be decomposed into short exact sequences in $\cL\cH(\cE,{\cE}x)$
 whose terms belong to $\cE$ and so we deduce that
$\cN_{{\cE}x}=\cN_{\cL\cH(\cE,{\cE}x)}\cap K(\cE)$.
Moreover any object $X^\point\in\cL\cH(\cE,{\cE}x)$ can be represented as a complex 
$X^\point\in K^{\leq 0}(\cE)$ such that $\tau_{\cL}^{\geq 0}X^\point\cong
X^\point$
and so (as in the proof of Proposition~\ref{P:2assts})
it can be represented by a complex 
$C(d):=[\Ker_{\cE}(d)\stackrel{\alpha}\to X\stackrel{d}\to \stackrel{\point}Y]
\in\cL\cH(\cE,{{\cE}x})$ whose terms belong to $\cE$.
This suggests that $\cL\cH(\cE,{{\cE}x})$ is a Gabriel quotient of the heart $\cL\cK(\cE)$
as we will see in Theorem~\ref{T:HvsCF}.
The same argument of Proposition~\ref{P:2assts} \eqref{E:DT}
proves that the exact sequence $0\to \Ker_{\cE}(d)\to X\to Y \to C(d)\to 0$
is exact
 in $\cL\cH(\cE,{\cE}x)$, hence any object in the left heart
 admits a $\cE$-resolution of length at most $2$.
 Therefore the subcategory $\cE$ in $\cL\cH(\cE,{\cE}x)$ satisfies the
hypotheses of \cite[Prop.~13.2.6]{KS} (see Proposition~\ref{KS}), hence
${K(\cE)\over{\cN_{{\cE}x}}}\simeq D(\cL\cH(\cE,{\cE}x))$.
\end{proof} 

Now we have a definition for any property appearing in  Theorem~\ref{Th:1tilt}
whose generalization is the following theorem:

\begin{theorem}\label{Th:2tilt}
Let $(\cE,{\cE}x)$ be an additive category endowed with a  Quillen exact structure ${\cE}x$. The following properties are equivalent:
\begin{enumerate}
\item $\cE$ is a $2$-cotilting torsion-free class in an abelian category $\cC$ (and sequence in ${\cE}x$ are short exact sequence in
$\cC$ whose terms belong to $\cE$);
\item $\cE$ is a $2$-tilting torsion class in an abelian category $\cC'$  (and sequence in ${\cE}x$ are short exact sequence in
$\cC'$ whose terms belong to $\cE$);
\item $(\cE,{\cE}x)$  is a $2$-quasi-abelian category;
\item $\cE$ is the intersection of the hearts $\cH_\cD\cap\cH_\cT$
of a $2$-tilting pair of $t$-structures on $D(\cE,{\cE}x)$.
\end{enumerate}
Moreover $\cC\simeq \cL\cH(\cE)$, $\cC'\simeq\cR\cH(\cE)$ and $(\cD,\cT)=(\cR\cD_\cE,\cL\cD_\cE)$.
\end{theorem}
%
\begin{proof}
By Proposition~\ref{P:2t} given any $2$-tilting pair of $t$-structures
$(\cD,\cT)$ we obtain that $\cE$ is a $2$-tilting torsion 
(resp. $2$-cotilting torsion-free) class in $\cH_\cD$ 
(resp. $\cH_\cT$) and by Remark~\ref{R:ttc-2qa}
$\cE$ is $2$-quasi-abelian. So (4) implies (1), (2) and (3).
By Theorem~\ref{T:3.5} given $\cE$ a $2$-tilting torsion class in
$\cH_\cD$  the pair $(\cD,\cT)$ (on $D(\cH_\cD)$) is a $2$-tilting pair of $t$-structures
(where $\cT$ is the $t$-structure obtained by tilting $\cD$ with respect to
$\cE$) and by Proposition~\ref{P:DT=RL} the pair 
$(\cD,\cT)$ coincides with $(\cR\cD_{(\cE,{\cE}x)},\cL\cD_{(\cE,{\cE}x)})$.
So (2) implies (4) and by the dual of Theorem~\ref{T:3.5}  (1) implies (4).
Given $(\cE,{\cE}x)$ a $2$-quasi-abelian category endowed with a Quillen exact
structure by Lemma~\ref{L:toQES} one can associate the
$2$-tilting pair of $t$-structures $(\cR\cD_{(\cE,{\cE}x)},\cL\cD_{(\cE,{\cE}x)})$
on $D(\cE,{\cE}x)$ such that $\cE=\cR\cH(\cE,{\cE}x)\cap\cL\cH(\cE,{\cE}x)$.
So (3) implies (4).
\end{proof}

\begin{remark}
We have proved that for any $n$-quasi-abelian category  $(\cE,{\cE}x)$  with $n\in\{1,2\}$ we have a derived equivalence
$D(\cL\cD(\cE,{\cE}x))\simeq D(\cR\cD(\cE,{\cE}x))$
even if the category $\cE$ does not contain a tilting object.
\end{remark}
%

\section{Effaceable functors}\label{S:4}

We prove that  the left $\cL\cH(\cE,{{\cE}x})$ is a Gabriel quotient of the heart $\cL\cK(\cE)\simeq  \rcoh\cE$
(as suggested in Lemma~\ref{L:toQES}).
This section is devoted to the tool of effaceable functors
which we  will use in Section~\ref{S:n} to define a Serre subcategory of $\rcoh\cE$.

\begin{proposition}\label{P:5.1}
Let $\cE$ be a projectively complete category and let $\rfp\cE$ be
the Freyd category of (contravariant) finitely presented functors.
The maximal Quillen exact structure on $\rfp\cE$ 
is the one whose conflations are 
$0\to F_1\rightarrowtail F\twoheadrightarrow F_2\to 0$ such that for any $E\in \cE$
the sequences of abelian groups
$0\to F_1(E)\to F(E)\to F_2(E)\to 0$ are exact.
\end{proposition}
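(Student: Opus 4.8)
The plan is to compare an arbitrary Quillen exact structure on $\rfp\cE$ with the class $\mathcal{P}$ of \emph{pointwise exact} sequences by passing through the ambient abelian category. Recall from Appendix~\ref{Ap1} the fully faithful Yoneda embedding $\rfp\cE\hookrightarrow\rcMod\cE$ into the abelian category of all contravariant additive functors $\cE\to\mathrm{Ab}$, in which exactness is computed pointwise and the representables $h_E=\Hom_\cE(-,E)$ are finitely presented (hence lie in $\rfp\cE$) and satisfy $\Hom_{\rfp\cE}(h_E,F)\cong F(E)$. Two structural facts organize everything: the cokernel of a morphism of finitely presented functors is again finitely presented and is computed pointwise, so cokernels in $\rfp\cE$ coincide with those in $\rcMod\cE$; and $\rfp\cE$ is closed under extensions in $\rcMod\cE$ (the horseshoe lemma splices two finite projective presentations into one). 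I would then establish two assertions: (A) $\mathcal{P}$ is a Quillen exact structure on $\rfp\cE$; and (B) the conflations of \emph{any} Quillen exact structure on $\rfp\cE$ lie in $\mathcal{P}$. Together these force $\mathcal{P}$ to be the maximal one.

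For (B), which is the heart of the argument, I would take a conflation, that is, a kernel-cokernel pair $0\to F_1\xrightarrow{u}F\xrightarrow{v}F_2\to 0$, and check pointwise exactness term by term. Since $u=\Ker v$ is a monomorphism in $\rfp\cE$ and $h_E\in\rfp\cE$, the map $\Hom_{\rfp\cE}(h_E,F_1)\to\Hom_{\rfp\cE}(h_E,F)$ is injective; by Yoneda this is exactly $F_1(E)\to F(E)$, so $u$ is pointwise injective. On the other side $v=\Coker_{\rfp\cE}u$ coincides with $\Coker_{\rcMod\cE}u$, hence is an epimorphism in the abelian category and so pointwise surjective; moreover there $\Ker v=\im u$ (image equals kernel of the cokernel), which combined with the pointwise injectivity of $u$ yields exactness at $F$. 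Thus $0\to F_1(E)\to F(E)\to F_2(E)\to 0$ is exact for every $E\in\cE$, i.e.\ the conflation lies in $\mathcal{P}$. I stress that this step does \emph{not} assume $\rfp\cE$ to be abelian (it need not admit kernels); the only non-abelian behaviour is concentrated in the monomorphism $u$, and the whole point is that a monomorphism into $F$ is detected, via Yoneda against the representables, as a genuine pointwise inclusion.

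For (A), I would observe that, by the coincidence of kernels and cokernels with those of $\rcMod\cE$ used above, a pointwise exact sequence with terms in $\rfp\cE$ is exactly a short exact sequence of $\rcMod\cE$ all of whose terms lie in $\rfp\cE$, and it is automatically a kernel-cokernel pair in $\rfp\cE$. Since $\rfp\cE$ is extension closed in the abelian category $\rcMod\cE$, the standard fact that such an extension-closed full subcategory inherits a Quillen exact structure (see Appendix~\ref{QES}) shows that $\mathcal{P}$ satisfies Quillen's axioms. Finally, the conflations of any Quillen exact structure are kernel-cokernel pairs, so by (B) every exact structure is contained in $\mathcal{P}$; as $\mathcal{P}$ is itself an exact structure by (A), it is the maximal one, which is the claim. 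The only genuinely delicate inputs are the Yoneda detection of monomorphisms and the closure of finitely presented functors under extensions; everything else is bookkeeping carried out inside $\rcMod\cE$.
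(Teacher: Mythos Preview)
Your proof is correct and follows essentially the same approach as the paper. Both arguments rest on the same two facts: extension closure of $\rfp\cE$ in $\rMod\cE$ (Proposition~\ref{PA.1}) gives the exact structure, and any kernel-cokernel pair in $\rfp\cE$ is automatically pointwise exact because kernels and cokernels there are computed pointwise; your Yoneda argument for the injectivity of $u(E)$ is exactly what unpacks the paper's terse phrase ``if a morphism admits a kernel it is also computed pointwise.''
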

\begin{proof}
Let us recall that 
$\rfp\cE$ admits cokernels which are calculated pointwise  and if a morphism
admits a kernel it is also computed pointwise; moreover
by Proposition~\ref{PA.1} any functor which is (pointwise or in $\rMod\cE$) extension of 
finitely presented functors is finitely presented too.
Hence the push-out of any inflation is an inflation, resp. the pull-back of any deflation is a deflation
and they are stable by compositions so these conflations define a 
Quillen exact structure on $\rfp\cE$.
For any other Quillen exact structure on $\rfp\cE$ a conflation
$0\to G_1\rightarrowtail G\twoheadrightarrow G_2\to 0$
 is a kernel-cokernel sequence and so for any $E\in\cE$ 
 we get a short exact sequence
$0\to G_1(E)\to G(E)\to G_2(E)\to 0$ of abelian groups.
\end{proof}

Let us recall the definition of \emph{right filtering} subcategory
of an exact category and some related results due to 
Schlichting (\cite{ScM}). Let us recall that for any inflation
$A\rightarrowtail B$ the object $A$ is called an admissible subobject
of $B$.

\begin{definition}\cite[Def.~1.3.]{ScM}\label{D:rfil}
Let $\cU$ be an exact category (i.e., an additive category with
a Quillen exact structure)
and $\cA\subset \cU$. 
Then the inclusion $\cA\subset \cU$ is called \emph{right filtering} and $\cA$ is called 
\emph{right filtering in} $\cU$ if:
\begin{enumerate}
\item $\cA$ is  an extension closed full subcategory of $\cU$;
\item $\cA$ is closed under taking admissible subobjects and admissible quotients;
\item
every map $f:U\to A$ with $U\in\cU$ and $A\in\cA$ admits a factorisation  $f=g\pi$
\xymatrix{U\ar@{->>}[r]^{\pi} & B\ar[r]^{g} & A} with $B\in\cA$ and $\pi$ a deflation.
\end{enumerate}
\end{definition}
%

\begin{definition}\cite[Def.~1.12.]{ScM}
Let $\cU$  be an exact category and $\cA\subset \cU$ be an extension closed full subcategory.
A $\cU$-morphism is called a \emph{weak isomorphism}
if it is a finite composition of inflations with cokernel in $\cA$ and deflations with kernel in 
$\cA$. We write $\Sigma_{\cA\subset \cU}$ for the class of weak isomorphisms. 
\end{definition}
%

\begin{lemma}\cite[Lem.~1.13.]{ScM}\label{L:5.1}
If $\cA$ is right filtering in $\cU$ then $\Sigma_{\cA\subset \cU}$ admits a 
calculus of right fractions.
\end{lemma}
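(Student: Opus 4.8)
The plan is to verify directly that $\Sigma := \Sigma_{\cA\subset\cU}$ satisfies the three axioms of a calculus of right fractions: it contains the identities and is closed under composition; it satisfies the right Ore condition (given a weak isomorphism $t\colon Z\to X$ and an arbitrary $f\colon Y\to X$, there exist $s\in\Sigma$ and $g$ with $tg=fs$); and it satisfies the right cancellation property (given $f,g\colon X\to Y$ and $s\in\Sigma$ with $sf=sg$, there exists $t\in\Sigma$ with $ft=gt$). The first axiom is immediate: $\Sigma$ is stable under composition by definition, and $\mathrm{id}_U$ is a deflation with kernel $0\in\cA$, hence lies in $\Sigma$. Since by definition every element of $\Sigma$ is a finite composite of the two \emph{generating} types of weak isomorphisms --- inflations with cokernel in $\cA$ and deflations with kernel in $\cA$ --- and both remaining axioms propagate along composites of the weak isomorphism involved, I would reduce each of them to a single generator by a routine induction on composition length.

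For the Ore condition I treat the two generator types. If $t$ is a deflation with $\Ker(t)=K\in\cA$, I form the pullback $W:=Z\times_X Y$; in an exact category the projection $s\colon W\to Y$ is again a deflation with $\Ker(s)\cong\Ker(t)=K\in\cA$, so $s\in\Sigma$ and the pullback square gives the required $g$. If instead $t\colon Z\rightarrowtail X$ is an inflation with cokernel $q\colon X\twoheadrightarrow C$, $C\in\cA$, I apply condition (3) of right filtering to the composite $qf\colon Y\to C$, factoring it as $qf=h\pi$ with $\pi\colon Y\twoheadrightarrow B$ a deflation and $B\in\cA$. Taking $s\colon W\rightarrowtail Y$ to be the kernel inclusion of $\pi$ (an inflation with cokernel $B\in\cA$, hence in $\Sigma$), one has $qfs=h\pi s=0$, so $fs$ factors through $\Ker(q)=Z$, producing $g$ with $tg=fs$.

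For cancellation I again split on the type of $s$. When $s$ is an inflation it is monic, so $sf=sg$ forces $f=g$ and $t=\mathrm{id}_X$ works. When $s\colon Y\twoheadrightarrow W$ is a deflation with $\Ker(s)=K\in\cA$, the hypothesis $s(f-g)=0$ lets me factor $f-g$ through the inclusion $K\rightarrowtail Y$ as $f-g=\iota d$ with $d\colon X\to K$; applying right filtering (3) to $d$ yields $d=e\pi$ with $\pi\colon X\twoheadrightarrow B$ a deflation and $B\in\cA$, and taking $t\colon U\rightarrowtail X$ to be the kernel of $\pi$ (a weak isomorphism) gives $dt=0$, hence $(f-g)t=0$, i.e.\ $ft=gt$.

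The easy inputs here are the exact-category axioms: pullbacks of deflations are deflations with isomorphic kernel, and inflations are monic and are kernels of their cokernels. The crux --- and the only point at which the full strength of \emph{right filtering} is used --- is the factorisation axiom (3), which is precisely what permits absorbing an a priori uncontrolled map into $\cA$ after precomposing with a weak isomorphism; it is invoked exactly in the inflation case of the Ore condition and the deflation case of cancellation. The main obstacle is thus to carry out these two factorisation arguments cleanly and then to lift the single-generator verifications to arbitrary composites.
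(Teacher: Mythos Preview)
The paper does not prove this lemma at all; it is stated as a citation of \cite[Lemma~1.13]{ScM} and used as a black box. Your direct verification is correct and is essentially the standard argument one finds in Schlichting's paper: reduce to the two generating types of weak isomorphisms and use pullback along a deflation for one case and the factorisation axiom~(3) for the other, in both the Ore and cancellation checks. The inductive reduction to a single generator is indeed routine in the way you indicate (compose the weak isomorphisms produced at each step). One small remark: you do not invoke condition~(2) of Definition~\ref{D:rfil} anywhere, and that is fine---it is not needed for the calculus of fractions itself, only for further properties of the localisation.
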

By passing to the opposite category one obtains the dual results in the left filtering case.

In the following, we will  define a  right filtering subcategory $\reff_{{\cE}x}\cE$ of 
 $\rfp\cE$ whose objects are the quotients in $\rfp\cE$ of deflations in ${\cE}x$,  they
 are called
 effaceable functors following the classical definition
(\cite[p.14]{Swan}, \cite[p.28]{We} and  \cite[p.4]{KRdaf}).
When $\cA$ is an abelian category,
 the right orthogonal class of $\reff\cA$ 
 coincides with the full subcategory of coherent functors
which respects the monomorphisms , hence the quotient category ${\rcoh\cA}\over{\reff\cA}$
is
the category of coherent left exact functors.
Following Krause's denomination the equivalence 
$\cA\simeq {{\rcoh\cA}\over{\reff\cA}}$  is called Auslander's formula (\cite[Th.~2.2]{KRdaf}).

This procedure is analog to the procedure one needs to do in order to define the category of sheaves
in abelian groups
associated to a topological space. One first defines the localizing Serre subcategory of
pre-sheaves which have stalk $0$ at any point, hence its right orthogonal class is formed
by separated pre-sheaves, while the quotient category provides the category of sheaves  in abelian groups.

It turns out that the approach via Quillen exact structures is equivalent 
to the one via Grothendieck topologies as recently explained
by Kaledin and Lowen in their paper \cite[2.2, 2.5]{Lowen2}. The deflations (resp. the inflations)
of a Quillen exact structure provide a Grothendieck pre-topology in $\cE$ 
(resp. in $\cE^\circ$). 
In this equivalence the notion of pre-sheaf with stalk $0$ at any point would
give rise to the notion of weak effaceable functor which is equivalent to the
notion of effaceable functor in the finitely presented case
(see Proposition~\ref{P4.1}).

Following the analogy with abelian sheaves on a topological space $X$, a pre-sheaf
$\cF$ has stalk $0$ in any point $x\in X$ if and only if for any $U$ open subset of $X$  and
$\eta \in \cF(U)$  there exists an open covering $p:\bigsqcup_{i\in I}U_i\twoheadrightarrow U$ such that 
the restriction
$\cF(p)(\eta)=\prod_{i\in I}\eta_{|U_i}=0$. In the additive context we have the following
counterpart: let $\cE$ be a projectively complete category endowed with a Quillen exact structure
$(\cE,{\cE}x)$ and
$\rfp\cE$ its Freyd category.
We denote by
$
\reff_{{\cE}x}\cE:=\{\Coker_{\rfp\cE}(q)\; |\; q \hbox{ is a deflation in } {\cE}x\}
$
the full subcategory of
 $\rfp\cE$ whose objects are cokernels 
 of morphisms induced by deflations of ${\cE}x$.
We call the elements of $\reff_{{\cE}x}\cE$ \emph{effaceable functors}.

\begin{proposition}\label{P4.1}

Let $F\in\rfp\cE$; the following are equivalent:
\begin{enumerate}
\item  $F$ is effaceable;
\item for any $U\in\cE$ and $\eta\in F(U)$, there exists a
deflation 
$p:Y\twoheadrightarrow U$ such that $F(p)(\eta)=0$ (weak effaceable).
\end{enumerate}
\end{proposition}
\begin{proof}
Let us prove that $(1)\Rightarrow (2)$. We have to prove that for any 
$\eta\in F(U)\cong \Hom_{\rfp\cE}(\cE_U,F)$ there exists a deflation $p:Y\twoheadrightarrow U$ such that $F(p)(\eta)=0$.
Let consider
$\cE_{E_1}\stackrel{q}\to\cE_{E_2}\stackrel{\gamma}\to F\to 0$
with $q :E_1\twoheadrightarrow E_2$ a deflation in $\cE$, then there exists
$\cE_{U}\stackrel{h}\to \cE_{E_2}$  (since  $\cE_U$ is projective in $\rfp\cE$) such that 
$\gamma h=\eta$.
Let consider the following commutative diagram where $Y:= E_1\times_{E_2}U$ and $p$ is a deflation
since it is the pull-back of a deflation (the axiomatic of Quillen exact structure guarantees the existence of the fiber product $Y$),   : 
{\footnotesize{\[\xymatrixrowsep{1.8 em}
\xymatrix{
\cE_{Y}\ar[r]^{p}\ar[d]&
\cE_U\ar[rd]^{\eta}\ar[d]_{h} & &\\
\cE_{E_1}\ar[r]^{q}\ar@/_1pc/[rr]_{0}
& \cE_{E_2} \ar[r]^{\gamma}& 
F \ar[r] & 0. \\
}
\]}}hence $F(p)(\eta)=\eta p=0$.

Let us prove that $(2)\Rightarrow (1)$. 
Since $F\in\rfp\cE$ is finitely presented there exists $f\in\cE(E_1,E_2)$ such that 
$\cE_{E_1}\stackrel{f}\to\cE_{E_2} \stackrel{\eta}\to F\to 0$ and by
hypothesis (2)
there exists a deflation $p:Y\twoheadrightarrow E_2$ such that 
$\eta p=0$  hence (since $\cE_{E_1}\twoheadrightarrow \Ker_{\rfg\cE}(\eta)$
and $\cE_Y$ is projective in $\rfg\cE$) there exists
$g:Y\to E_1$ such that $p=fg$ which proves
(by Remark~\ref{R3.1}) that $f$ is a deflation.
\end{proof}

\begin{remark}\label{R5.1}
Following $(2)$ implies $(1)$ in the previous Proposition~\ref{P4.1}
we have also proved that, given any presentation
$\cE_{E_1}\stackrel{f}\to\cE_{E_2} \stackrel{\eta}\to F\to 0$
 of an effaceable functor, the map $f$ is a deflation.
\end{remark}

\begin{proposition}\label{P:SerreSub}
Let consider $\rfp\cE$ endowed with its maximal Quillen exact structure.
The full subcategory
$\reff_{{\cE}x}\cE\subset \rfp\cE$ is right filtering;
if $\cE$ is right coherent, hence $\reff_{{\cE}x}\cE$
is a Serre subcategory of the abelian category $\rfp\cE=\rcoh\cE$.
Dually 
$\cE\leff_{{\cE}x}\subset \cE\lfp$ is left filtering in  $\cE\lfp$ and
if $\cE$ is left coherent, hence $\cE\leff_{{\cE}x}$
is a Serre subcategory of the abelian category $\cE\lfp=\cE\lcoh$.
\end{proposition}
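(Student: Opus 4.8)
\section*{Proof proposal}

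The plan is to verify directly the three defining conditions of a right filtering subcategory (Definition~\ref{D:rfil}) for $\reff_{{\cE}x}\cE\subset\rfp\cE$, where $\rfp\cE$ carries its maximal Quillen exact structure, whose conflations are exactly the pointwise exact sequences by Proposition~\ref{P:5.1}. Throughout the main tool is the weak effaceable characterisation of Proposition~\ref{P4.1}: a finitely presented functor $F$ is effaceable if and only if for every $U\in\cE$ and every $\eta\in F(U)$ some deflation $p\colon Y\twoheadrightarrow U$ of ${\cE}x$ satisfies $F(p)(\eta)=0$. I expect conditions (1) and (2) of Definition~\ref{D:rfil} to follow by short element chases, while the factorisation condition (3) is the crux and will need a pullback construction.

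For extension closure, given a conflation $0\to F_1\to F\to F_2\to 0$ in $\rfp\cE$ with $F_1,F_2$ effaceable (so $F\in\rfp\cE$ by Proposition~\ref{PA.1}) and $\eta\in F(U)$, I first kill the image of $\eta$ in $F_2(U)$ by a deflation $p_2\colon Y_2\twoheadrightarrow U$; pointwise exactness lets me lift $F(p_2)(\eta)$ to some $\zeta\in F_1(Y_2)$, which a further deflation $p_1\colon Y_1\twoheadrightarrow Y_2$ annihilates in $F_1$. Naturality then gives $F(p_2p_1)(\eta)=0$, and $p_2p_1$ is again a deflation, so $F$ is weak effaceable, hence effaceable. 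Closure under admissible quotients and subobjects is the same game played pointwise: for a quotient $F_2$ I lift $\eta\in F_2(U)$ to $F(U)$ along the pointwise surjection and annihilate it there; for a subobject $F_1$ I push $\eta\in F_1(U)$ into $F(U)$ along the pointwise monomorphism and annihilate it there.

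For the factorisation (3), given $g\colon G\to F$ with $G\in\rfp\cE$ and $F$ effaceable, I fix a presentation $\cE_{E_1}\xrightarrow{\cE_q}\cE_{E_2}\to F\to 0$ with $q\colon E_1\twoheadrightarrow E_2$ a deflation, and a presentation $\cE_{G_1}\xrightarrow{\rho}\cE_{G_2}\xrightarrow{\epsilon_G}G\to 0$. Since $\cE_{G_2}$ is projective, $g\epsilon_G$ lifts along $\cE_{E_2}\twoheadrightarrow F$ to $\cE_{\bar h}$ for some $\bar h\colon G_2\to E_2$; the vanishing $g\epsilon_G\rho=0$ together with projectivity of $\cE_{G_1}$ produces $\bar\tau\colon G_1\to E_1$ with $q\bar\tau=\bar h\,\bar\rho$. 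I then form the pullback $P:=E_1\times_{E_2}G_2$, whose projection $q'\colon P\twoheadrightarrow G_2$ is a deflation, set $B:=\Coker_{\rfp\cE}(\cE_{q'})\in\reff_{{\cE}x}\cE$, and use the universal property of $P$ to lift $\bar\rho$ to $\sigma\colon G_1\to P$ with $q'\sigma=\bar\rho$. As $\rho$ now factors through $\cE_{q'}$, the quotient $\cE_{G_2}\twoheadrightarrow B$ descends to a map $\pi\colon G\to B$, and comparison of the two presentations provides $g'\colon B\to F$ with $g'\pi=g$.

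The hard part will be checking that $\pi$ is a deflation, i.e. that $\Ker\pi$ is finitely presented so that $0\to\Ker\pi\to G\to B\to 0$ is a conflation. Here the exactness of $\cE$ enters decisively: the deflation $q'$ has a kernel $K\rightarrowtail P$ in $\cE$, and one computes $\Ker(\cE_{q'})=\cE_K$, a representable, hence finitely generated, subfunctor of $\cE_P$. A diagram chase then identifies $\Ker\pi$ as the quotient of the finitely presented functor $\Coker(\cE_\sigma)$ by the image of $\cE_K$, a finitely generated subfunctor; since a quotient of a finitely presented functor by a finitely generated subobject is finitely presented, $\Ker\pi\in\rfp\cE$, giving condition (3) with no coherence hypothesis. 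Finally, when $\cE$ is right coherent, $\rfp\cE=\rcoh\cE$ is abelian and, representables being projective, evaluation at each object of $\cE$ is exact, so every short exact sequence is pointwise exact, that is a conflation; admissibility is then vacuous and the three filtering conditions say precisely that $\reff_{{\cE}x}\cE$ is closed under subobjects, quotients and extensions, i.e.\ is a Serre subcategory. The left filtering and left coherent statements follow by running the entire argument in $\cE^\circ$, as indicated after Lemma~\ref{L:5.1}.
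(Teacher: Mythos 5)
Your proof is correct and follows essentially the same route as the paper's: conditions (1) and (2) of Definition~\ref{D:rfil} are checked via the weak-effaceability characterisation of Proposition~\ref{P4.1}, condition (3) via the same pullback $E_1\times_{E_2}G_2$ (the paper's $U_2\times_{A_2}A_1$) with $B:=\Coker_{\rfp\cE}(\cE_{q'})$ and the induced deflation $\pi$, and the Serre conclusion in the coherent case by noting that admissibility becomes vacuous in the abelian category $\rcoh\cE$. If anything, your identification of $\Ker\pi$ as $\Coker(\cE_\sigma)$ modulo the image of $\cE_K$ is more precise than the paper's, which asserts $\Ker(\pi)=\Coker_{\rfp\cE}(r)$ and silently absorbs the contribution of $\Ker(\cE_{q'})=\cE_K$.
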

\begin{proof}
Let us prove that $\reff_{{\cE}x}\cE\subset \rfp\cE$ is right filtering; by  Definition~\ref{D:rfil}
we have to verify:
\begin{enumerate}
\item $\reff_{{\cE}x}\cE$ is  an extension closed full subcategory of $\rfp\cE$;
\item $\reff_{{\cE}x}\cE$ is closed under taking admissible subobjects and admissible quotients
in $\rfp\cE$;
\item
every map $f:U\to A$ with $U\in\rfp\cE$ and $A\in\reff_{{\cE}x}\cE$ admits a factorisation  $f=g\pi$
with
$U\stackrel{\pi}{\twoheadrightarrow}  B\stackrel{g}\to  A$,  $\pi$ a deflation and $B\in\reff_{{\cE}x}\cE$.
\end{enumerate}
Let us verify that $\reff_{{\cE}x}\cE$ is closed under extension in $\rfp\cE$.
Let consider a conflation 
$0\to T_1\rightarrowtail T\twoheadrightarrow T_2\to 0$  such that  both $T_1, T_2$ are effaceable functors
and let us prove that $T$ satisfies condition (2) of
Proposition~\ref{P4.1}.
Given
 $\eta\in T(U)\cong\rfp\cE(\cE_U,T)$ with $U\in\cE$,  let us consider the following commutative diagram (explained below):
{\small \[\xymatrixrowsep{1em}
\xymatrix{
\cE_W\ar[r]^{q}\ar[rdd]_{0}& \cE_Y\ar[dd]^{\xi}\ar[rd]^{p} \ar@/^2pc/[rrdd]^{0}& & & \\
&&\cE_U\ar[d]^{\eta} \ar[rd]^{\beta\eta}& &\\
0\ar[r] &T_1\ar[r]^{\alpha} &T \ar[r]^{\beta} &T_2 \ar[r]& 0 \\
}
\]}
Because $T_2$ is effaceable,
there exists a deflation $p:Y\twoheadrightarrow U$ such that 
$\beta\eta p=0$,
and so $\eta p=T(p)(\eta)$ factors through $\alpha$ via $\xi\in\rfp\cE(\cE_Y,T_1)$. 
Now, since  $T_1$ is effaceable,
there exists $q:W\twoheadrightarrow Y$ such that $\xi q=T_1(q)(\xi)=0$, hence
$0=\alpha\xi q=\eta pq=T(pq)(\eta)$. We remark that $pq$ is a deflation since it is 
a composition of two deflations, therefore the previous construction proves that $T$ is 
effaceable. 

Let us prove that $\reff_{{\cE}x}\cE$ is closed under admissible subobjects and admissible
quotients.
Let   $0\to T_1\rightarrowtail T\twoheadrightarrow T_2\to 0$  be a conflation
in  $\rfp\cE$ with $T\in \reff_{{\cE}x}\cE$. 
Given $U\in \cE$ and
$\eta\in T_1(U)$,  
there exists a deflation 
$p:Y\twoheadrightarrow U$ such that $\alpha(Y)(T_1(p)(\eta))=T(p)(\alpha(U)(\eta))=0$,
which proves that $T_1(p)(\eta)=0$ (since $\alpha(Y)$ is a monomorphism of
abelian groups by Proposition~\ref{P:5.1}).
Given  an object $V$ of $\cE$ and
$\xi\in T_2(V)\cong\rfp\cE(\cE_V,T_2)$,
 there exists 
$\sigma:\cE_V\to T$ such that $\xi=\beta\sigma$ (because $\beta$ is a deflation).
Since $T$ is effaceable, there exists $q:W\twoheadrightarrow V$ such that
$\sigma q=0$,
which implies $\xi q=T_2(q)(\xi)=0$ and so $T_2$ is effaceable.

Let consider: $f:U\to A$, 
$\cE_{U_2}\stackrel{h}\to \cE_{U_1}\to U\to 0$  a presentation of $U\in\rfp\cE$ and 
$\cE_{A_2}\stackrel{p}\to \cE_{A_1}\to A\to 0$   a presentation of
$A\in\reff_{{\cE}x}\cE$ with $p$  a deflation.
Since representable functors are projective in $\rfp\cE$ there exists 
$f_i:\cE_{U_i}\to\cE_{A_i}$ with $i\in\{1,2\}$ such that $p f_2=f_1 h$.
Hence  the following diagram commutes: {\small{\[
\xymatrixrowsep{1.4em}
\xymatrix{
\cE_{U_2}\ar@/_1pc/@{..>}[rr]_(0.3){f_2} \ar[r]^(0.4){r}\ar[d]_{h}& \cE_{U_1\times_{A_1}A_2}\ar[d]^{q}\ar[r] & \cE_{A_2}\ar[d]^{p} \\
\cE_{U_1}\ar[d]\ar@{=}[r] & \cE_{U_1}\ar[d]\ar[r]^{f_1}& \cE_{A_1}\ar@{->>}[d] \\
U\ar@/_1pc/@{..>}[rr]_(0.3){f} \ar[r]^(0.3){\pi}& \Coker_{\rfp\cE}(q)\ar[r]^(0.65){g}  & A \\
}
\]}}Thus $\ker\pi$ belongs to $\rfp\cE$ because
$\cE_{U_2}\stackrel{r}\to \cE_{U_1\times_{A_1}A_2}\to \ker\pi\to 0$ is exact and
the sequence $0\to \ker\pi\to U\stackrel{\pi}\to \Coker_{\rfp\cE}(q)\to 0$ is a conflation
since it is pointwise exact. This proves that
 $\Coker_{\rfp\cE}(q)$ belongs to $\reff_{{\cE}x}\cE$ ($q$ is a deflation since it is the pullback of $p$).
 
When $\cA$ is an abelian category conditions (1) and (2) prove that $\reff_{{\cE}x}\cA$
is a Serre subcategory of $\rcoh\cA$.
The left statement holds true by duality (in $\cE^\circ$).
\end{proof}
%

\section{$n$-coherent categories}\label{S4}

We have seen that the main difference
between  $1$-quasi-abelian categories and $2$-quasi-abelian ones is the need of Quillen exact structures.
The passage from $n=2$  to $n\geq 3$
requires a new technicality due to the possible absence of
kernels and cokernels.
Let $(\cE,{\cE}x)$ be a projectively complete category endowed with a Quillen exact structure.
We are looking for a definition of $n$-quasi-abelian category which permits us 
to associate to  $(\cE,{\cE}x)$ a $n$-tilting pair of $t$-structures on
$D(\cE,{\cE}x):=K(\cE)/{\cN_{{\cE}x}}$.
By Proposition~\ref{P:DT=RL} 
we know that if these $t$-structures exist they are the left and right $t$-structures:
\[\cL\cD^{\leq 0}_{(\cE,{\cE}x)}:=\{X^\point\in K(\cE)\; |\; X^\point\cong E^\point_{\leq 0}\; \hbox{in }
D(\cE,{\cE}x) \hbox{ with }
E^\point_{\leq 0}\in K^{\leq 0}(\cE)\}
\]
\[\cR\cD^{\geq 1}_{(\cE,{\cE}x)}:=\{X^\point\in K(\cE)\; |\; X^\point\cong E^\point_{\geq 1}\; \hbox{in }
D(\cE,{\cE}x) \hbox{ with }
E^\point_{\geq 1}\in K^{\geq 1}(\cE)\}.\]

In the following we will use the notions  of coherent functor, coherent category 
(Definition~\ref{D1.3}) weak kernels and cokernels; we refer to Appendix~\ref{Ap1}
for more details.
First of all we study the case of $(\cE,{\cE}x_{\rm split})$
endowed with its minimal Quillen exact structure
so that
$D(\cE,{\cE}x_{\rm split})=K(\cE)$.

\begin{proposition}\label{P:4.1}
The followings hold:
\begin{enumerate}
\item the class
$\cL\cK_\cE^{\leq 0}$ is an aisle in $K(\cE)$ if and only if
$\cE$ is right coherent;
\item the class
$\cR\cK_\cE^{\geq 1}$ is a co-aisle in $K(\cE)$ if and only if
$\cE$ is left coherent.
\end{enumerate}
If $\cE$ is a right coherent category 
we have $\cL\cK(\cE)\simeq \rcoh\cE$  dually if $\cE$ is a left-coherent category 
$\cR\cK(\cE)\simeq \left(\cE\lcoh\right)^\circ$. Moreover given $\cE$ a coherent category 
$\cR\cK_\cE^{\leq -n}\subseteq \cL\cK_\cE^{\leq 0}\subseteq \cR\cK_\cE^{\leq 0}$
(with $n$ minimal) 
if  and only if $\rcoh\cE$ (or equivalently $\cE\lcoh$) has projective dimension 
$n$.
\end{proposition}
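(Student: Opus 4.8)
The plan is to route everything through the Freyd category $\rfp\cE$, using that right coherence of $\cE$ is exactly the statement that $\cA:=\rcoh\cE=\rfp\cE$ is abelian, with the Yoneda embedding $h\colon\cE\to\cA$ identifying $\cE$ with $\Proj\cA$ (all projectives are representable since $\cE$ is projectively complete) and providing enough projectives, with \emph{finitely presented} syzygies. For the first equivalence I would prove the two implications separately. For ``aisle $\Rightarrow$ right coherent'' I test the coreflection on two-term complexes: given $d\colon A\to B$ in $\cE$, put $C:=[A\xrightarrow{d}B]$ in degrees $0,1$. A direct computation gives $\Hom_{K(\cE)}(W[0],C)=\{g\colon W\to A\mid dg=0\}$ for every $W\in\cE$, while for the coreflection $T:=\tau_\cL^{\leq 0}C\in K^{\leq 0}(\cE)$ the counit restricts in degree $0$ to a map $\varphi\colon T^0\to A$ with $d\varphi=0$. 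The adjunction isomorphism $\Hom_{K(\cE)}(W[0],T)\cong\Hom_{K(\cE)}(W[0],C)$ forces every such $g$ to factor as $g=\varphi\psi$, so $\varphi$ is a weak kernel of $d$ and $\cE$ is right coherent.

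For the converse I would construct the coreflection explicitly. Given $E^\point\in K(\cE)$, form in $\cA$ the truncation $w:=\tau^{\leq 0}_\cA(hE^\point)=[\cdots\to hE^{-1}\to\Ker_\cA(hd^0)\to 0]$, which is bounded above and whose only non-projective entry sits in degree $0$. Right coherence keeps $\Ker_\cA(hd^0)$ and all higher syzygies finitely presented, so $w$ admits a projective resolution $T^\point\to w$ with $T^\point\in K^{\leq 0}(\Proj\cA)=K^{\leq 0}(\cE)$. Since $T^\point$ is bounded above, hence $K$-projective, the $D(\cA)$-morphism $T^\point\to w\to hE^\point$ lifts to a homotopy class in $K(\cA)$, and full faithfulness of $h$ on complexes of representables pulls it back to $\eta\colon T^\point\to E^\point$ in $K(\cE)$. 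Its cone computes in $D(\cA)$ as $\tau^{\geq 1}_\cA(hE^\point)\in D^{\geq 1}(\cA)$, so for any $U\in K^{\leq 0}(\cE)$ (which may be taken in degrees $\leq 0$, hence $K$-projective) one gets $\Hom_{K(\cE)}(U,\Cone\eta)=\Hom_{D(\cA)}(hU,\tau^{\geq 1}_\cA hE^\point)=0$. Thus $\Cone\eta\in(K^{\leq 0}(\cE))^\perp$ and $\eta$ is the coreflection, so $K^{\leq 0}(\cE)$ is an aisle. Part $(2)$ is the exact dual, with weak kernels replaced by weak cokernels, $\rcoh\cE$ by $\cE\lcoh$, and $h$ by the covariant Yoneda embedding.

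The last two assertions reuse this description. Under $K(\cE)\to D(\cA)$ the left $t$-structure is induced by the standard one, so its heart is recovered by $H^0_\cA$; the functor $H^0_\cA\colon\cL\cK(\cE)\to\rcoh\cE$ is an equivalence (every coherent functor is the degree-$0$ cohomology of its projective resolution viewed in $K^{\leq 0}(\cE)$, and full faithfulness follows by comparing resolutions), and dually $\cR\cK(\cE)\cong(\cE\lcoh)^\circ$; this is the coherent analogue of Corollary~\ref{C:L=rcoh} and Proposition~\ref{P:2assts}. Finally $\cL\cK_\cE^{\leq 0}\subseteq\cR\cK_\cE^{\leq 0}$ holds automatically on degree grounds, and by Proposition~\ref{P:DT=RL} together with Theorem~\ref{Th:n--tilt} the remaining inclusion $\cR\cK_\cE^{\leq -n}\subseteq\cL\cK_\cE^{\leq 0}$ is precisely the condition making $(\cR\cK_\cE,\cL\cK_\cE)$ an $n$-tilting pair. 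By point $(2)$ of \ref{n:lista} and the Remark following it, this is equivalent to every object of $\cL\cK(\cE)\cong\rcoh\cE$ admitting an $\cE$-resolution of length $\leq n$, i.e.\ $\pjdim\rcoh\cE\leq n$, and dually via $\cR\cK(\cE)\cong(\cE\lcoh)^\circ$ to $\pjdim\cE\lcoh\leq n$; taking the least such $n$ yields the stated equality.

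I expect the main obstacle to be the converse of $(1)$: producing the truncation functor with no genuine kernels at hand. The delicate points are that every syzygy must remain inside $\rcoh\cE$ (this is exactly where coherence, rather than the mere objectwise existence of weak kernels, is needed, so that bounded-above projective resolutions exist within the finitely presented functors), and that the morphism $T^\point\to hE^\point$ must be transported from $D(\cA)$ back to an honest map in $K(\cE)$, for which the bounded-above (hence $K$-projective) nature of the truncation and the full faithfulness of the Yoneda embedding on complexes of representables are essential.
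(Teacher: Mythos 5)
Your parts (1) and (2), and the identifications $\cL\cK(\cE)\cong\rcoh\cE$, $\cR\cK(\cE)\cong(\cE\lcoh)^\circ$, are correct and follow the paper's own route: the weak kernel is extracted from the truncation of a two-term complex exactly as in the paper, and your explicit construction of the coreflection (projective resolution of $\tau^{\leq 0}(hE^\point)$ inside $\rcoh\cE$, lifted back to $K(\cE)$ via $K$-projectivity of bounded-above complexes of projectives and full faithfulness of Yoneda) is an honest unpacking of the paper's composite $K(\cE)\to D(\rcoh\cE)\stackrel{\delta^{\leq 0}}\to D^{\leq 0}(\rcoh\cE)\cong K^{\leq 0}(\cE)$; the heart equivalence via $H^0$ and comparison of resolutions likewise matches the paper's functor $C(d)\mapsto\Coker_{\rcoh\cE}(d)$.

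The gap is in the final biconditional, where your argument only yields one direction. From the inclusion $\cR\cK_\cE^{\leq -n}\subseteq\cL\cK_\cE^{\leq 0}\subseteq\cR\cK_\cE^{\leq 0}$ you may legitimately invoke point (2) of \ref{n:lista} (it assumes only the inclusion of aisles) to conclude that the $n$-th syzygy of every object of $\cL\cK(\cE)\cong\rcoh\cE$ is projective, hence $\pjdim\rcoh\cE\leq n$. But for the converse implication, that $\pjdim\rcoh\cE\leq n$ forces $\cR\cK_\cE^{\leq -n}\subseteq\cL\cK_\cE^{\leq 0}$, every result you cite --- \ref{n:lista}, the Remark following it, Proposition~\ref{P:DT=RL}, Theorem~\ref{Th:n--tilt} --- carries the inclusion $\cD^{\leq -n}\subseteq\cT^{\leq 0}\subseteq\cD^{\leq 0}$ as a standing hypothesis, so appealing to them here is circular: you would be assuming the very inclusion you must produce. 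Saying the inclusion ``is precisely the condition making $(\cR\cK_\cE,\cL\cK_\cE)$ an $n$-tilting pair'' does not help either, since an $n$-tilting pair contains that inclusion by definition.

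What is missing is the direct argument the paper gives. If $\pjdim\rcoh\cE\leq n$, then for any $E^\point\in K^{\geq 0}(\cE)$ the functor $\Ker_{\rcoh\cE}(d^0_E)$ is a second syzygy of $\Coker_{\rcoh\cE}(d^0_E)$, hence has projective dimension at most $n-2$, so the coreflection $\tau^{\leq 0}_{\cL}(E^\point)$ can be realized by a complex in $K^{[-n+2,0]}(\cE)$ and therefore $\tau^{\geq 1}_{\cL}(E^\point)\in K^{\geq -n+2}(\cE)$. Since for an arbitrary $X^\point\in K(\cE)$ one has $\tau^{\geq 1}_{\cL}X^\point\cong\tau^{\geq 1}_{\cL}(X^{\geq 0})$ (the stupid truncation $X^{\leq -1}$ lies in $\cL\cK_\cE^{\leq -1}$), this gives $\cL\cK_\cE^{\geq 1}\subseteq K^{\geq -n+2}(\cE)=\cR\cK_\cE^{\geq -n+2}\subseteq\cR\cK_\cE^{\geq -n+1}$, and passing to left orthogonals yields $\cR\cK_\cE^{\leq -n}={}^{\perp}(\cR\cK_\cE^{\geq -n+1})\subseteq{}^{\perp}(\cL\cK_\cE^{\geq 1})=\cL\cK_\cE^{\leq 0}$, as required. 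With this direction supplied, your deduction of the statement for $\cE\lcoh$ by passing to $\cE^\circ$, and hence of the equality of the two dimensions, is fine.
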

 \begin{proof}
Statement (2) is dual to (1).
 Let us recall that by Proposition~\ref{P1.3}
 $\cE$ is right coherent if and only if it admits
 weak kernels.
 
 Let us suppose that $\cL\cK_\cE^{\leq 0}$
is an aisle (we denote by $\tau_{\cL}^{\leq 0}$ its truncation functor) and let us prove that
$\cE$ is right coherent.
Let $d:E_0\to E_1$ be a morphism in
$\cE$ and let us regard it as a complex $E^\point:=:E_0\stackrel{d}\to E_1$
(with $E^0$ placed in degree $0$).
The universal property in $K(\cE)$ of the truncation 
$[\cdots\to K^{-1}\to \stackrel{\point}{K^{0}}\to 0\to\cdots]=\tau_{\cL}^{\leq 0}(E^\point)
\stackrel{\alpha^\point}\to E^\point$
proves that $(K^0,\alpha^0)$ is a weak kernel for $d$.

On the other side let us suppose that $\cE$ is right coherent and let us prove that $\cL\cK_\cE^{\leq 0}$
is an aisle in $K(\cE)$.
We note that $\cL\cK_\cE^{\leq 0}$ is 
 extension closed in $K(\cE)$ and $\cL\cK_\cE^{\leq 0}[1]\subseteq \cL\cK_\cE^{\leq 0}$.
Since $\cE$ is right coherent the Freyd category
of (contravariant) finitely presented functor is abelian $\rfp\cE=\rcoh\cE$ 
(Proposition~\ref{P1.3})
and $\cE$
coincides with the class of projective objects in $\rcoh\cE$; hence 
$D^{-}(\rcoh\cE)\simeq K^{-}(\cE)$.
Moreover $D^{\leq 0}(\rcoh\cE)\simeq \cL\cK_\cE^{\leq 0}$, hence their hearts are equivalent: $\cL\cK(\cE)\simeq \rcoh\cE$. 
The category $\rcoh\cE$ has finite projective dimension $n$ if and only if
given any $E^\point\in \cR\cK_\cE^{\geq 0}$
the kernel $\Ker_{\rcoh\cE}(d^0_E)$ admits a resolution of length 
at most $n-2$ (since $0\to \Ker_{\rcoh\cE}(d^0_E) \to E^0\to E^1\to \Coker_{\rcoh\cE}(d^0_E)\to 0$
is exact and any  projective resolution of  $\Coker_{\rcoh\cE}(d^0_E)$ 
has at most length $n$).
This is equivalent to require that 
$\tau_\cL^{\geq 1}X^\point\cong \tau_\cL^{\geq 1}X^{\geq 0}\subseteq \cR\cK_\cE^{\leq -n+2}$
for any  $X^\point\in K(\cE)$  (see \ref{num:lefttstr})
which is equivalent to 
$\cR\cK_\cE^{\leq -n}\subseteq \cL\cK_\cE^{\leq 0}\subseteq \cR\cK_\cE^{\leq 0}$.
 
 In this case $n$ is called the \emph{global dimension} of $\cE$.
\end{proof}
%


\begin{definition}\label{D:n-cohcat}
A coherent category of global dimension at most $n$
will be said \emph{$n$-coherent}.
For example the category $\rproj R$ of projective (right) modules of finite type   
on a coherent ring $R$ with global dimension $n$ is 
 $n$-coherent.
\end{definition}
\section{$n$-tilting torsion classes for $n>2$}\label{S:n}
%


%
%
%

\begin{definition}\label{Def:prek}
Let $(\cE,{\cE}x)$ be a projectively complete category endowed with a Quillen exact structure
and let $f:A\to B$ be a morphism in $\cE$.
A \emph{$(\cE,{\cE}x)$-pre-kernel} of $f$ is a map $i:K\to A$ in $\cE$ such that
$f\circ i=0$ and for any $j:X\to A$ such that $f\circ j=0$ there exist (possibly many)
a deflation $\pi:N\twoheadrightarrow X$ and a map $k:N\to K$
such that $j\pi=ik$:
{\small{\[\xymatrixrowsep{1.8em}
\xymatrix{
N\ar@{..>>}[r]^{\pi}\ar@{..>}[d]_{k} & X\ar[d]_{j}\ar[rd]^{0} \\
K\ar[r]^{i} \ar@/_1pc/[rr]_{0}& A\ar[r]^{f} & B. \\ }
\]}}
The category $(\cE,{\cE}x)$ has \emph{$(\cE,{\cE}x)$-pre-pull-back squares}  if, 
given any pair $f_i:X_i\to Y$ with $i=1,2$, there exist
 an object $Z$ and  $g_i:Z\to X_i$ such that $f_1 g_1=f_2 g_2$ and, for any pair of arrows
 $\alpha_i:W\to X_i$ with $i\in\{1,2\}$ such that $\alpha_1 f_1=\alpha_2 f_2$,
 there exist (not necessary unique)
a deflation $\pi:N\twoheadrightarrow W$ and a map $k:N\to Z$
such that the  diagram below commutes:
{\small{\begin{equation}\label{E1}\xymatrixrowsep{1.8em}
\xymatrix{
W\ar@/^1.5pc/[rrr]^{\alpha_1} \ar[rrd]_{\alpha_2} &N\ar@{..>>}[l]_{\pi}\ar@{..>}[r]^{k} &Z\ar@{-->}[r]^{g_1} \ar@{-->}[d]_{g_2} 
& X_1\ar[d]^{f_1} \\
&& X_2\ar[r]^{f_2}& Y.\\
}
\end{equation}}}
Passing throughout the opposite category one obtains the dual notion of 
\emph{$(\cE,{\cE}x)$-pre-cokernel } and \emph{$(\cE,{\cE}x)$-pre-push-out square}.
\end{definition}
%


\begin{remark}\label{R:pk}
The notion
 of $(\cE,{\cE}x)$-pre-kernel (resp.
$(\cE,{\cE}x)$-pre-cokernel) is not functors due to the lack of the unicity in their definition.
Nevertheless its existence is equivalent to require the existence of kernels
in the quotient category ${ \rfp\cE\over \reff_{{\cE}x}\cE}$ (see Remark~\ref{R:pk}) which is a necessary and sufficient condition to prove
that this quotient category is an abelian category (see Theorem~\ref{T:HvsCF}).
When the Quillen exact structure coincides with the minimal one (split short exact sequences)
we have $D(\cE,{\cE}x_{\rm split})=K(\cE)$ and
the previous definitions reduce to the notions 
\emph{weak kernel}
and \emph{weak pull-back square} (see Definition~\ref{D1.4}).

If $\cE$ admits weak kernels 
it admits $(\cE,{\cE}x)$-pre-kernels for any Quillen exact structure on $\cE$ since
any weak kernel is also a $(\cE,{\cE}x)$-pre-kernel.
More generally if $\cE$ admits $(\cE,{\cE}x)$-pre-kernels
given any other Quillen exact structure ${\overline{{\cE}x}}$
containing the conflations of ${\cE}x$ we have that 
$\cE$ admits $(\cE,{\overline{{\cE}x}})$-pre-kernels.
\end{remark}
%

\begin{lemma}\label{L:Dextc}
Let $(\cE,{\cE}x)$ be a projectively complete category endowed with a Quillen exact structure
and
$D(\cE,{\cE}x):=K(\cE)/{\cN_{{\cE}x}}$ its derived category.
The classes
\[\cL\cD^{\leq 0}_{(\cE,{\cE}x)}:=\{X^\point\in K(\cE)\; |\; X^\point\cong E^\point_{\leq 0}\; \hbox{in }
D(\cE,{\cE}x) \hbox{ with }
E^\point_{\leq 0}\in C^{\leq 0}(\cE)\}
\]
\[\cR\cD^{\geq 1}_{(\cE,{\cE}x)}:=\{X^\point\in K(\cE)\; |\; X^\point\cong E^\point_{\geq 1}\; \hbox{in }
D(\cE,{\cE}x) \hbox{ with }
E^\point_{\geq 1}\in C^{\geq 1}(\cE)\}\]
are extension closed in $D(\cE,{\cE}x)$ and
$
\cL\cD^{\leq 0}_{(\cE,{\cE}x)}[1]\subseteq \cL\cD^{\leq 0}_{(\cE,{\cE}x)}$;
$\cR\cD^{\geq 1}_{(\cE,{\cE}x)}[-1]\subseteq \cR\cD^{\geq 1}_{(\cE,{\cE}x)}$.
\end{lemma}
\begin{proof}
Let us prove that $\cL\cD^{\leq 0}_{(\cE,{\cE}x)}$ is extension closed; the analog result
for $\cR\cD^{\geq 1}_{(\cE,{\cE}x)}$ follows dually.
We have to prove that, given a distinguished triangle 
$X^\point \to M^\point \to Y^\point \stackrel{+}\to \,$
with $X^\point, Y^\point \in \cL\cD^{\leq 0}_{(\cE,{\cE}x)}$, the middle term
$ M^\point$ belongs to $ \cL\cD^{\leq 0}_{(\cE,{\cE}x)}$.
This is equivalent to prove that the mapping cone of the connecting morphism
$Y^\point[-1]\stackrel{\delta}\to X^\point$ belongs to $\cL\cD^{\leq 0}_{(\cE,{\cE}x)}$.
Since $\delta $ is a morphism in $D(\cE,{\cE}x):=K(\cE)/{\cN_{{\cE}x}}$  by definition it can be represented 
as 
$\xymatrix@-10pt{Y^\point[-1] \ar[r]^(0.6){\delta'}&F^\point  & X^\point\ar[l]^{\cong}_{\alpha} }$
with $\delta'$ and $\alpha$ morphism in $K(\cE)$ such that
$Y^\point[-1]=[\cdots \to E^{i}\to E^{i+1}\to \cdots \to E^0 \to E^1\to 0\cdots\to ]$,
$F^\point \in K(\cE)$,
$X^\point=
[\cdots \to X^{i}\to X^{i+1}\to \cdots \to X^0 \to  0\to \cdots]\in C^{\leq 0}(\cE)$
and  $M(\alpha)=[\cdots\to X^{-1}\oplus F^{-2}\to X^0\oplus F^{-1}\to \stackrel{\point}{F^0}
\to F^1\to \cdots]$, mapping cone of $\alpha$,
in ${\cN_{{\cE}x}}$.
So for any $n\geq 0$ the differential 
$d^n_{M(\alpha)}=d^n_F=m_{n} e_{n}:
\xymatrix@-10pt{F^n\ar@{->>}[r]^{e_n} & D^n\ar@{>->}[r]^{m_n} & F^{n+1}}$
with $e_n$ a deflation and $m_n$ an inflation. 
Thus $M^\point$ is isomorphic to  $M(\delta')$ whose first differential $d^1_M=d^1_F$, hence
its   zero differential  factors through $D^0\cong \Ker(d^1_F)$ 
as $d^0_M:
\xymatrix@-10pt{E^1\oplus F^0\ar[r]^(0.6){e} & D^0\ar@{>->}[r]^{m_0} & F^{1}}$; moreover
 $e$ is a deflation because the composition
$e_0:F^0\to E^1\oplus F^0\stackrel{e}\to D^0$ is a deflation. We obtain the following morphism
in $K(\cE)$
{\small{\[\xymatrixrowsep{1.2em}
\xymatrix{
 &[ \cdots \ar[r]\ar[d] & E^0\oplus F^{-1}\ar[r] \ar[d] & 
\Ker(e) \ar[r] \ar[d] &0 \ar@{>->}[d] \ar[r] & 0\ar[r]\ar[d] & \cdots] \\
M\cong & [ \cdots \ar[r]& E^0\oplus F^{-1}\ar[r]^{d_M^{-1}} &
E^1\oplus F^0 \ar[r]^(0.6){d_M^{0}} & F^1\ar[r] & F^2\ar[r] & \cdots] \\
}\]}}
whose mapping cone belongs to ${\cN_{{\cE}x}}$ thus proving that 
$M\in \cL\cD^{\leq 0}_{(\cE,{\cE}x)}$.
\end{proof}
%

\begin{lemma}
\label{L:wkc}
Let $(\cE,{\cE}x)$ be a projectively complete category endowed with a Quillen exact structure.
\begin{enumerate}
\item 
The subcategory $\cL\cD^{\leq 0}_{(\cE,{\cE}x)}$ is an aisle in $D(\cE,{\cE}x)$
if and only if  $\cE$ has $(\cE,{\cE}x)$-pre-kernels.
\item The subcategory $\cR\cD^{\geq 1}_{(\cE,{\cE}x)}$ is a co-aisle in $D(\cE,{\cE}x)$
if and only if  $\cE$ has $(\cE,{\cE}x)$-pre-cokernels.
\end{enumerate}
If the previous conditions are satisfied 
 $\cE=\cL\cD^{\leq 0}_{(\cE,{\cE}x)}\cap \cR\cD^{\geq 0}_{(\cE,{\cE}x)}$
 and  any object in the heart $\cL\cH(\cE,{\cE}x)$ can be represented as
a complex
$K^\point\in C^{\leq 0}(\cE)$ such that $K^i=(\cE,{\cE}x)$-pre-kernel of $d_K^{i+1}$
for any $i\leq -2$.
Dually objects in  $\cR\cH(\cE,{\cE}x)$ are complexes
$C^\point\in C^{\geq 0}(\cE)$ such that $C^i=(\cE,{\cE}x)$-pre-cokernel of $d_C^{i-2}$
for any $i\geq 2$.
\end{lemma}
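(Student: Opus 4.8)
The plan is to establish part (1) and obtain part (2) by passing to the opposite category, where the identification $D(\cE,{\cE}x)^\circ\cong D(\cE^\circ,{\cE}x^\circ)$ exchanges the left and right $t$-structures and carries $D(\cE,{\cE}x)$-kernels to $D(\cE^\circ,{\cE}x^\circ)$-cokernels. Throughout I would use the Keller--Vossieck criterion: $\cL\cD^{\leq 0}_{(\cE,{\cE}x)}$ is an aisle if and only if it is stable under $[1]$ and extensions and the inclusion admits a right adjoint $\tau_\cL^{\leq 0}$. Stability is immediate, since $K^{\leq 0}(\cE)$ is sent into itself by suspension and the mapping cone of a chain map between two complexes in $K^{\leq 0}(\cE)$ again lies in $K^{\leq 0}(\cE)$; hence the whole statement reduces to producing the truncation functor and translating it into the existence of $D(\cE,{\cE}x)$-kernels.

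For the direction $(\Rightarrow)$ I would assume $\tau_\cL^{\leq 0}$ exists and, given $f\colon A\to B$ in $\cE$, apply it to the two-term complex $E^\point=[\stackrel{\point}{A}\xrightarrow{f}B]$ with $A$ in degree $0$. Taking a representative $K^\point\in K^{\leq 0}(\cE)$ of $\tau_\cL^{\leq 0}(E^\point)$, I would set $K:=K^0$ and let $i\colon K\to A$ be the canonical morphism read off from the comparison with the triangle $B[-1]\to E^\point\to A[0]\xrightarrow{f}B[0]$; this $i$ is a genuine morphism of $\cE$ and satisfies $f\circ i=0$ because $\cE$ embeds fully faithfully into $D(\cE,{\cE}x)$ (Remark~\ref{R4.3}) and the composite $A[0]\xrightarrow{f}B[0]$ kills the image of the truncation. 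To verify the universal property, a morphism $j\colon X\to A$ in $\cE$ with $f\circ j=0$ is exactly a chain map $X[0]\to E^\point$, and since $X[0]\in K^{\leq 0}(\cE)$ it factors through the counit as $X[0]\xrightarrow{u}K^\point\to E^\point$. By Lemma~\ref{L:5.1} the map $u$ is a right fraction $X[0]\xleftarrow{s}Z^\point\xrightarrow{g}K^\point$ with $s$ a quasi-isomorphism, i.e. with ${\cE}x$-acyclic cone. The crux is that acyclicity of the cone in its top degree forces the degree-$0$ component $s^0\colon Z^0\to X$ to be a deflation $\pi\colon N\twoheadrightarrow X$; setting $k:=g^0\colon N\to K$ then gives $j\pi=ik$, which is precisely the defining property of a $D(\cE,{\cE}x)$-kernel.

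For $(\Leftarrow)$ I would assume $\cE$ has $D(\cE,{\cE}x)$-kernels and construct $\tau_\cL^{\leq 0}$ directly. Given $X^\point\in K(\cE)$ the complex $K^\point\in K^{\leq 0}(\cE)$ is built by an iterated $D(\cE,{\cE}x)$-kernel construction: $K^0$ is the $D(\cE,{\cE}x)$-kernel of $d^0_X$, and one resolves downward, each $K^{i-1}$ being a $D(\cE,{\cE}x)$-kernel of the preceding differential, exactly as a projective resolution is produced in the coherent case of Proposition~\ref{P:4.1}. The comparison map $K^\point\to X^\point$ is assembled from the defining factorisations, and the deflations $\pi$ occurring there cause no trouble in $D(\cE,{\cE}x)$: a conflation $L\rightarrowtail N\xrightarrow{\pi}X$ yields an isomorphism $[L\rightarrowtail N]\cong X[0]$ in $D(\cE,{\cE}x)$, because its mapping cone $[L\to N\to X]$ is ${\cE}x$-acyclic, so every factorisation-after-deflation becomes an honest factorisation in the localisation. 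One then checks that the cone of $K^\point\to X^\point$ lies in $\cL\cD^{\geq 1}_{(\cE,{\cE}x)}=(K^{\leq 0}(\cE))^\perp$, which furnishes the truncation triangle and hence the aisle.

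Finally, the equality $\cE=\cL\cD^{\leq 0}_{(\cE,{\cE}x)}\cap\cR\cD^{\geq 0}_{(\cE,{\cE}x)}$ follows as in the proof of Lemma~\ref{L:toQES}: an object of the intersection may be represented in $K^{\leq 0}(\cE)$, and, lying in $\cR\cD^{\geq 0}_{(\cE,{\cE}x)}$, its canonical map to $\tau_\cR^{\geq 0}$ of itself, namely $\Coker_{\cE}(d^{-1})[0]$, is a quasi-isomorphism, forcing the complex into $\cE$. The description of the hearts then reads off the shape of the truncation functor just built: an object of $\cL\cH(\cE,{\cE}x)=\cL\cD^{\leq 0}_{(\cE,{\cE}x)}\cap\cL\cD^{\geq 0}_{(\cE,{\cE}x)}$ is isomorphic to $\tau_\cL^{\geq 0}\tau_\cL^{\leq 0}$ of itself, hence to a complex $K^\point\in K^{\leq 0}(\cE)$ which below degree $-1$ is a $D(\cE,{\cE}x)$-kernel resolution, i.e. $K^i$ is a $D(\cE,{\cE}x)$-kernel of $d_K^{i+1}$ for all $i\leq-2$; the right-heart statement is dual. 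The main obstacle is the crux step of $(\Rightarrow)$, identifying the degree-$0$ part of a weak isomorphism with a deflation, together with its converse counterpart of controlling the iterated construction, which is exactly where the calculus of right fractions of Lemma~\ref{L:5.1} and the resolvability of deflations in $D(\cE,{\cE}x)$ carry the argument.
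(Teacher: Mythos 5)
Your overall strategy coincides with the paper's: part (2) by duality, the aisle criterion of Keller--Vossieck, the forward direction by truncating the two-term complex $[A\xrightarrow{f}B]$ and reading the $D(\cE,{\cE}x)$-kernel off a resolution-roof, and the backward direction by iterated $D(\cE,{\cE}x)$-kernels. Your forward direction is essentially the paper's argument, modulo miscitations (that the counit is an honest chain map is Lemma~\ref{L:mD=mK}, not Remark~\ref{R4.3}; the roof representation comes from the calculus of fractions of the Verdier quotient $K(\cE)/\cN_{{\cE}x}$, not from Lemma~\ref{L:5.1}, which concerns effaceable functors). But your backward direction has a genuine gap: the iterated $D(\cE,{\cE}x)$-kernel resolution computes a candidate truncation only for complexes in $K^{\geq 0}(\cE)$; applied verbatim to a general $X^\point\in K(\cE)$ it discards all negative-degree terms of $X^\point$. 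Concretely, for $X^\point=E[1]$ with $0\neq E\in\cE$ your recipe allows $K^\point=0$ (the zero object is a $D(\cE,{\cE}x)$-kernel of a zero map), yet the cone of $0\to E[1]$ is $E[1]$ itself, which does not lie in $(K^{\leq 0}(\cE))^{\perp}$ because $E[1]\in K^{\leq 0}(\cE)$ and $\mathrm{id}_{E[1]}\neq 0$. The paper fills this with a step you omit: it first constructs $\tau^{\leq 0}_{\cL}$ on $K^{\geq 0}(\cE)$, proves the Hom-bijection $D(\cE,{\cE}x)(X^\point,K^\point)\cong D(\cE,{\cE}x)(X^\point,L^\point)$ there by an explicit zig-zag diagram of pullbacks along the deflations furnished by the kernel property (this is exactly what your ``one then checks'' conceals, including the homotopy argument needed for injectivity), and only then defines $\tau^{\leq 0}_{\cL}(E^\point)$ for an arbitrary complex as the mapping cone of $\tau^{\leq 0}_{\cL}(d^{-1}_{E^\point})\colon E^{\leq -1}[-1]\to\tau^{\leq 0}_{\cL}(E^{\geq 0})$, verifying the orthogonality through a $3\times 3$ diagram of triangles.

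A second, smaller but real, flaw: your proof of $\cE=\cL\cD^{\leq 0}_{(\cE,{\cE}x)}\cap\cR\cD^{\geq 0}_{(\cE,{\cE}x)}$ imports the argument of Lemma~\ref{L:toQES}, in which $\tau_{\cR}^{\geq 0}$ of a complex in $K^{\leq 0}(\cE)$ is $\Coker_{\cE}(d^{-1})[0]$. In the present generality $\cE$ has no cokernels, only $D(\cE,{\cE}x)$-cokernels, so $\delta^{\geq 0}_{\cR}(E^\point)$ is a genuine complex $C^\point\in K^{\geq 0}(\cE)$ rather than an object of $\cE$ placed in degree $0$, and your one-step conclusion does not apply. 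The paper argues instead with the composite $\gamma\colon\tau^{\leq 0}_{\cL}E^\point\to E^\point\to\delta^{\geq 0}_{\cR}E^\point$, a chain map by Lemma~\ref{L:mD=mK}, whose mapping cone is acyclic precisely when $E^\point$ lies in the intersection; decomposing that acyclic complex into conflations $W^{i-1}\rightarrowtail K^{i}\twoheadrightarrow W^{i}$ exhibits $\tau^{\leq 0}_{\cL}E^\point\cong W^0[0]\in\cE$. Your description of the hearts, read off from the shape of the truncation functor, does match the paper once the truncation is correctly built.
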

\begin{proof}
(1). Let us suppose that $ \cL\cD^{\leq 0}_{(\cE,{\cE}x)}$ is an aisle in $D(\cE,{\cE}x)$.
Hence the
inclusion functor $i_{\leq 0}:\cL\cD^{\leq 0}_{(\cE,{\cE}x)}\hookrightarrow D(\cE,{\cE}x)$ 
admits a right adjoint $\tau^{\leq 0}_{\cL}:D(\cE,{\cE}x)\to \cL\cD^{\leq 0}_{(\cE,{\cE}x)}$
(by abuse of notation we will denote by $\tau^{\leq 0}_{\cL}$ also the composition
$i_{\leq 0}\tau^{\leq 0}_{\cL}$).
Let $f:A\to B$ be a morphism in $\cE$ and let regard it as a complex:
$M^\point:=[\stackrel{\point}A\stackrel{f}\to B] \in C^{\geq 0}(\cE)$.
Let $\alpha: \tau^{\leq 0}(M^\point)\to M^\point$ 
be the co-unit of the adjunction (it is a morphism in $D(\cE,{\cE}x)$) and 
$\tau^{\leq 0}_{\cL}(M^\point)=[ \cdots \to K^{-1}\to \stackrel{\point}{K^{0}}] \in C^{\leq 0}(\cE)$.
By Lemma~\ref{L:mD=mK} the morphism $\alpha$ is a morphism in $K(\cE)$ 
and we denote by $i:=\alpha^0:K^0\to A$. Let us prove that $K^0\stackrel{i}\to A$
is a  $(\cE,{\cE}x)$-pre-kernel for $f$.
Let consider $j:X\to A$ a morphism in $\cE$ such that $fj=0$, hence $j$ induces a morphism 
 $X[0]\stackrel{j}\to M^\point$
 in $D(\cE,{\cE}x)$. Since $X[0]\in  \cL\cD^{\leq 0}_{(\cE,{\cE}x)}$ there exists an unique morphism
 $\beta: X[0]\to \tau^{\leq 0}(M^\point)$ 
 in $D(\cE,{\cE}x)$ such that $\alpha\beta=j$ i.e., there exists a resolution
 $\cdots \to N^{-1}\to N^{0}\stackrel{\pi}{\twoheadrightarrow} X\to 0$  (which is a complex in 
 the null system $\cN_{{\cE}x}$) 
 and a morphism of complexes
 $k^\point:N^\point\to \tau^{\leq 0}(M^\point)$
 such that $j\pi:N^0\stackrel{k^0}\to K^0\stackrel{i}\to A$
 which proves that $\cE$ has $(\cE,{\cE}x)$-pre-kernels.
 
On the other side 
let us suppose that $\cE$ has $(\cE,{\cE}x)$-pre-kernels. The full subcategory
$\cL\cD^{\leq 0}_{(\cE,{\cE}x)}$
of $D(\cE,{\cE}x)$ is closed by $[1]$ and 
it is closed under extensions  (Lemma~\ref{L:Dextc}). 
It remains to prove that
the inclusion functor $i_{\leq 0}:\cL\cD^{\leq 0}_{(\cE,{\cE}x)}\hookrightarrow D(\cE,{\cE}x)$ admits a right adjoint
$\tau^{\leq 0}_{\cL}:D(\cE,{\cE}x)\to \cL\cD^{\leq 0}_{(\cE,{\cE}x)}$.
Let first compute the $\tau^{\leq 0}_{\cL}(L^\point)$ in the case
$L^{\point}:=[\stackrel{\point}{L^{0}}\stackrel{d_L^0}\to L^1\to L^2\to \cdots]\in C^{\geq 0}(\cE)$
(hence  $\tau^{\leq 0}_{\cL}(L^\point)\in\cL\cH(\cE,{\cE}x)$ since $ L^{\point}\in \cL\cD^{\geq 0}_{(\cE,{\cE}x)}$).
Let $K^0\stackrel{i}\to L^0$ be a  $(\cE,{\cE}x)$-pre-kernel of $d^0_L$,
$K^{-1}\stackrel{d^{-1}_K}\to K^0$ a  $(\cE,{\cE}x)$-pre-kernel of $i$ and
recursively let  $K^{-i-1}\stackrel{d^{-i-1}_K}\to K^{-i}$
be a  $(\cE,{\cE}x)$-pre-kernel of $d^{-i}_K$ with $i\geq 1$.
We
pose $\tau^{\leq 0}_{\cL}(L^{\point}):=
[\cdots\stackrel{d^{-2}_{K}}\to K^{-1}\stackrel{d^{-1}_{K}}\to K^0] \in \cL\cD^{\leq 0}_{(\cE,{\cE}x)}$ with 
$h:K^\point\to L^\point$ the morphism of complexes induced by $i:K^0\to L^0$. 
For any 
other complex $X^\point\in C^{\leq 0}(\cE)$ let us prove that the morphism
$D(\cE,{\cE}x)(X^\point, K^\point)\stackrel{h\circ \_}\to D(\cE,{\cE}x)(X^\point, L^\point)\cong K(\cE)(X^\point, L^\point)$ 
(by Lemma~\ref{L:mD=mK})
is an isomorphism. 
For any $\alpha\in K(\cE)(X^\point, L^\point)$ by definition of
$(\cE,{\cE}x)$-pre-kernel we can construct the following commutative diagram:

{\small\[\xymatrixrowsep{1.5 em}\xymatrixcolsep{1.5em}
\xymatrix{ 
\cdots \ar[r] & X^{-2}\ar@{=}[r] & X^{-2}\ar[r]^{d^{-2}_X} & X^{-1}\ar@{=}[r] & X^{-1}\ar[r]^{d^{-1}_X}  & X^{0}
\ar[rd]^{\alpha^0} \ar@/^1pc/[rrd]^{0}& & \\
  \cdots \ar[r] & N^{-2}\ar[d]_{k^{-2}} \ar@{->>}[u]^{\pi^{-2}}\ar@{->>}[r] & X^{-2}\times_{X^{-1}}N^{-1}\ar[r] \ar@{->>}[u]
& N^{-1}  \ar@{->>}[u]^{\pi^{-1}}
  \ar@{->>}[r]\ar[d]_{k^{-1}} & X^{-1}\times_{X^{0}}N^{0}\ar@{->>}[u]
 \ar[r] & N^{0}\ar@{->>}[u]^{\pi^{0}} \ar[d]_{k^0}  & L^0\ar[r]^{d^0_L}& L_1. \\
 \cdots \ar[r] & K^{-2}\ar[rr]^{d^{-2}_K}  & &  K^{-1}\ar[rr]^{d^{-1}_K} &  & K^{0} \ar[ru]_{i}\ar@/_1pc/[rru]^{0}& & \\ 
  }
\]}
Looking at the right side of the diagram, since $K^0\stackrel{i}\to L^0$ is $(\cE,{\cE}x)$-pre-kernel of $d^0_L$,
we can find $\pi^0$ and $k^0$ such that the diagram commutes.
The morphism $X^{-1}\times_{X^{0}}N^{0}\to K^0\to L^0$ is zero since $\alpha^0 d^{-1}_X=0$, thus we can find 
$\pi^{-1}$ and $k^{-1}$ such that the diagram commutes.
The composition $X^{-2}\times_{X^{-1}}N^{-1}\to X^{-1}\times_{X^{0}}N^{0}\to N^0$ is zero since 
the first map factors uniquely through  $X^{-2}\times_{X^{-1}}N^{-1}\to X^{-2}\times_{X^{0}}N^{0 }\to X^{-1}\times_{X^{0}}N^{0}$ and
$X^{-2}\times_{X^{0}}N^{0 }\to X^{-1}\times_{X^{0}}N^{0}\to N^0$ is zero. Iterating this procedure one can construct the previous commutative diagram.

The morphism
 $\pi:N^\point\to X^\point$ is a qis in $D(\cE,{\cE}x)$ and  $k^\point:N^\point\to K^\point$
is a morphism in $K(\cE)$.
Let us denote by $k\in D(\cE,{\cE}x)(X^\point, L^\point)$ the morphism $k=k^\point \pi^{-1}:$ 
\xymatrix{X^\point  &N^\point\ar[l]_{\pi}^{\cong} \ar[r]^{k^\point} & K^\point \\ }; hence
$\alpha =h  k$ which proves that 
$D(\cE,{\cE}x)(X^\point, K^\point)\stackrel{h\circ \_}\to D(\cE,{\cE}x)(X^\point, L^\point)$ 
is an epimorphism.
Given a morphism $k\in D(\cE,{\cE}x)(X^\point, L^\point)$ represented by  a qis
$\pi:N^\point\to X^\point$ and a morphism $k^\point:N^\point\to K^\point$ in $K(\cE)$,
the morphism $hk=0$ if and only if $ik_0=0$.
Since $d^{-1}_K$ is the   $(\cE,{\cE}x)$-pre-kernel of $i$ 
there exists $p^0:W^0\twoheadrightarrow N^0$ and $s^0:W^0\to K^{-1}$
such that $k^0 p^0=d^{-1}_ks^0$.
Let consider the cartesian square:
{\small{ $$\xymatrixrowsep{1.8 em}
\xymatrix{
\widetilde{N}^{-1}\ar[r]^{\tilde{d}^{-1}} \ar@{->>}[d]_{\tilde{p}^0}& W^0\ar@{->>}[d]_{p^0} \\
N^{-1}\ar[r]^{d_N^{-1}} & N^0 \\
}
$$}}and let us denote by
$\phi_{-1}:=k^{-1} \tilde{p}^0 - s^0 \tilde{d}^{-1}$. We have
$d_k^{-1}\phi_{-1}=0$ and, since $d^{-2}_k$ is the   $(\cE,{\cE}x)$-pre-kernel of $d_k^{-1}$,
 there exists $q^{-1}:W^{-1}\twoheadrightarrow \widetilde{N}^{-1}$ and $s^{-1}:W^{-1}\to K^{-2}$
such that $\phi_{-1} q^{-1}=d^{-2}_ks^{-1}$ which gives
$k^{-1} \tilde{p}^0q^{-1} =d^{-2}_ks^{-1}+ s^0 \tilde{d}^{-1}q^{-1}$. Let define
$p^{-1}:= \tilde{p}^0q^{-1}$ and $d_W^{-1}:=\tilde{d}^{-1}q^{-1}$.
Iterating the argument we construct a qis $W^\point\stackrel{p^\point}\to N^\point$
 such that $k p^\point$ is null homotopy 
(via the homotopy  $s^i$) which proves that 
$k=0$ in $D(\cE,{\cE}x)$.
 
Let $E^\point:=[\cdots\to E^{-1}\to \stackrel{\point}{E^0}
\to E^1\to\cdots]$ be a complex in $D(\cE,{\cE}x)$. Let 
consider the following commutative digram whose rows and columns are distinguished triangles
and let put by definition $\tau^{\leq 0}_{\cL}(E^\point)$ to be the mapping cone in
$D(\cE,{\cE}x)$ of the morphism $\tau^{\leq 0}_{\cL}(d^{-1}_{E^\point})$:
{\small\begin{equation}\label{E1.t-str}
\xymatrixrowsep{1 em}
\xymatrix{
E^{\leq -1}[-1] \ar[r]^{id}\ar[d]^{\tau^{\leq 0}(d^{-1}_{E^\point})} & E^{\leq -1}[-1] \ar[r]
\ar[d]^{d^{-1}_{X^\point}}
& 0\ar[r]^(0.6){+}\ar[d]  & \\
\tau^{\leq 0}_{\cL}(E^{\geq 0}) \ar[r] \ar[d] & E^{\geq 0} \ar[r]\ar[d] 
& \tau^{\geq 1}_{\cL}(E^{\geq 0}) \ar[r]^(0.6){+}\ar[d]^{\cong}& \\
\tau^{\leq 0}_{\cL}(E^\point) \ar[r] \ar[d]^{+} &  E^{\point}\ar[r]\ar[d]^{+}& \tau^{\geq 1}_{\cL}( E^{\point}) 
\ar[r]^(0.6){+}\ar[d]^{+} & \\
& & & \\ } 
\end{equation}}
For any $X^\point\in K^{\leq 0}(\cE)$ we have that 
$D(\cE,{\cE}x)(X^\point, E^\point)\cong D(\cE,{\cE}x)(X^\point,\tau^{\leq 0}_{\cL}(E^\point))$
since $D(\cE,{\cE}x)(X^\point,\tau^{\geq 1}_{\cL}(E^{\geq 0}))=0$.
An object in the heart $\cL\cH(\cE,{\cE}x)$ can be represented as
a complex
$K^\point\in K^{\leq 0}(\cE)$ such that $\tau^{\leq -1}_{\cL}(K^\point)\in\cN_{{\cE}x}$ and so
$K^i=D(\cE,{\cE}x)$-kernel of $d_K^{i+1}$
for any $i\leq -2$. Statement 
(2) is dual to (1).

If $\cE$ admits $(\cE,{\cE}x)$-pre-kernels and $(\cE,{\cE}x)$-pre-cokernels,
let us denote by $\tau^{\leq 0}_{\cL}$ (resp. $\delta^{\geq 1}_{\cR}$) the truncation functor
with respect to $\cL\cD^{\leq 0}_{(\cE,{\cE}x)}$
(resp.  $\cR\cD^{\geq 1}_{(\cE,{\cE}x)}$).
Hence  $E^\point\in\cL\cD^{\leq 0}_{(\cE,{\cE}x)}\cap \cR\cD^{\geq 0}_{(\cE,{\cE}x)}$
if and only if the composition 
$$\gamma: K^\point:=\tau^{\leq 0}_{\cL}E^\point\to E^\point \to \delta^{\geq 0}_{\cR}(E^\point)
=:C^\point$$
is an isomorphism in $D(\cE,{\cE}x)$ i.e., if and only if the mapping cone 
$M(\gamma)\in\cN_{{\cE}x}$:
{\small\[\xymatrixrowsep{0.8 em}\xymatrixcolsep{1 em}
\xymatrix{
K^\point\ar[d]^{\gamma}: & \cdots \ar[rr] \ar[d] && K^{-1}\ar[rr]^{d^{-1}_K}\ar[d] &&
K^{0}\ar[d]^{\gamma^0}\ar[rr] & &0 \ar[rr] \ar[d] && \cdots\ar[rr]\ar[d] && \\
C^\point : \ar[d] & \cdots\ar[rr] \ar[d]&&0\ar[rr] \ar[d]&&
C^{0}\ar[rr]^{d^{0}_C} \ar[d]&& C^1 \ar[rr]^{d^{1}_C} \ar[d] && \cdots\ar[rr]\ar[d]&& \\
M(\gamma): & K^{-1}\ar@{->>}[rd]\ar[rr]^{d^{-1}_K} && 
K^0\ar@{->>}[rd]\ar[rr]^{\gamma^0} && C^0\ar@{->>}[rd]\ar[rr]^{d^0_C} &&
C^1\ar[rr]^{d^1_C} \ar@{->>}[rd]&&\cdots\ar[rr] &&  \\
& &W^{-1}\ar@{>->}[ru] & & W^0\ar@{>->}[ru] & &W^1\ar@{>->}[ru] &&
W^2 & \cdots && \\
}\]}this proves that $K^\point \cong W^0[0]\in\cE$.
\end{proof}
%

\begin{lemma}\label{L:6.4}
Let us suppose that $(\cE,{\cE}x)$ admits $(\cE,{\cE}x)$-pre-kernels and $(\cE,{\cE}x)$-pre-cokernels.
Hence
$\cR\cD^{\leq -n}_{(\cE,{\cE}x)}\subseteq \cL\cD^{\leq 0}_{(\cE,{\cE}x)}\subseteq \cR\cD^{\leq 0}_{(\cE,{\cE}x)}$ (with $n\geq 2$) if and only if one of 
the following equivalent conditions holds:
\begin{enumerate}
\item 
given any complex 
$
K^{[-n+1,0]}:=\xymatrix@-7pt{
K^{-n+1}\ar[r]^{d_K^{-n+1}}& K^{-n+2}\ar[r] &\cdots \ar[r]^{d_K^{-1}} & K^0\\ }$
with $K^i=(\cE,{\cE}x)$-kernel of $d_K^{i+1}$ for any $i\leq -2$,
the morphism $d_K^{-n+1}$ has a kernel in $\cE$;
\item
given any complex
$
C^{[-n+1,0]}:=\xymatrix@-7pt{
C^{-n+1}\ar[r]^{d_C^{-n+1}}&C^{-n+2}\ar[r] &\cdots \ar[r]^{d_C^{-1}} & C^0\\ }$
with $C^i=(\cE,{\cE}x)$-cokernel of $d_C^{i-2}$ for any  $i\geq -n-1$,
the morphism $d_C^{-1}$ has a cokernel in $\cE$.
\end{enumerate}
In this case the sequence in (1) is exact in $ \cL\cH(\cE,{\cE}x)$
while the one in (2) is exact in
$\cR\cH(\cE,{\cE}x)$ and
 the pair $(\cR\cD_{(\cE,{\cE}x)},\cL\cD_{(\cE,{\cE}x)})$ is a $n$-tilting
pair of $t$-structures on $D(\cE,{\cE}x)$ .
\end{lemma}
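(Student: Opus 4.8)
The plan is to split the chain $\cR\cD^{\leq -n}_{(\cE,{\cE}x)}\subseteq \cL\cD^{\leq 0}_{(\cE,{\cE}x)}\subseteq \cR\cD^{\leq 0}_{(\cE,{\cE}x)}$ into its two inclusions and observe that the right one is unconditional: a complex in $K^{\leq 0}(\cE)$ admits no nonzero morphism in $D(\cE,{\cE}x)$ to a complex in $K^{\geq 1}(\cE)$ (by Lemma~\ref{L:mD=mK} such morphisms agree with those in $K(\cE)$, and there are none for degree reasons), so $\cL\cD^{\leq 0}_{(\cE,{\cE}x)}\subseteq {}^{\perp}\cR\cD^{\geq 1}_{(\cE,{\cE}x)}=\cR\cD^{\leq 0}_{(\cE,{\cE}x)}$. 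Thus the whole content is the left inclusion $\cR\cD^{\leq -n}_{(\cE,{\cE}x)}\subseteq \cL\cD^{\leq 0}_{(\cE,{\cE}x)}$. Moreover the two displayed bullets are exchanged by passing to $\cE^\circ$, which swaps the left and right $t$-structures and the notions of $D(\cE,{\cE}x)$-kernel and $D(\cE,{\cE}x)$-cokernel while carrying the gap condition to itself; hence it suffices to prove that the first bullet is equivalent to this left inclusion.

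For that equivalence I would work through the explicit heart representatives of Lemma~\ref{L:wkc}: every object of $\cL\cH(\cE,{\cE}x)$ is a complex $K^\point\in K^{\leq 0}(\cE)$ with $K^i$ a $D(\cE,{\cE}x)$-kernel of $d_K^{i+1}$ for all $i\leq -2$. As in the computation of \ref{num:lefttstr} and in the projective-dimension criterion of Proposition~\ref{P:4.1}, with honest kernels replaced by $D(\cE,{\cE}x)$-kernels, the inclusion $\cR\cD^{\leq -n}_{(\cE,{\cE}x)}\subseteq \cL\cD^{\leq 0}_{(\cE,{\cE}x)}$ is equivalent to the statement that every such object admits a representative concentrated in degrees $[-n,0]$, that is, an $\cE$-resolution of length at most $n$. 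Given a complex $K^{[-n+1,0]}$ as in the first bullet, extending its $D(\cE,{\cE}x)$-kernel tower below degree $-n+1$ produces a genuine object $K^\point$ of $\cL\cH(\cE,{\cE}x)$; if $d_K^{-n+1}$ has a kernel $\Ker_\cE(d_K^{-n+1})\in\cE$, the map of complexes replacing the infinite tail $[\cdots\to K^{-n}\to K^{-n+1}]$ by $[\Ker_\cE(d_K^{-n+1})\to K^{-n+1}]$ is an isomorphism in $D(\cE,{\cE}x)$, since its cone lies in $\cN_{{\cE}x}$; so the length bound holds. Conversely, the length bound applied to this $K^\point$ forces $d_K^{-n+1}$ to acquire an honest kernel in $\cE$. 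This proves the first bullet equivalent to the left inclusion, and dually the second bullet equivalent to it as well.

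Finally, assuming the gap condition I would deduce that $(\cR\cD_{(\cE,{\cE}x)},\cL\cD_{(\cE,{\cE}x)})$ is $n$-tilting by verifying Definition~\ref{D:ntpt}: condition (1) is the gap condition just established, and for condition (2) I would re-run the argument of Lemma~\ref{L:toQES} (modelled on Proposition~\ref{P:2assts}) with the length bound $2$ replaced by $n$. The first bullet guarantees that every object of $\cL\cH(\cE,{\cE}x)$ has an $\cE$-resolution of length at most $n$, and since $D(\cE,{\cE}x)$-kernels compute the kernels of $\cE$-maps inside $\cL\cH(\cE,{\cE}x)$ one obtains $\cN_{{\cE}x}=\cN_{\cL\cH(\cE,{\cE}x)}\cap K(\cE)$; Proposition~\ref{KS} then yields $D(\cL\cH(\cE,{\cE}x))\cong K(\cE)/\cN_{{\cE}x}=D(\cE,{\cE}x)$, and dually $D(\cR\cH(\cE,{\cE}x))\cong D(\cE,{\cE}x)$. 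As $\cE=\cL\cD^{\leq 0}_{(\cE,{\cE}x)}\cap \cR\cD^{\geq 0}_{(\cE,{\cE}x)}$ is the intersection of the two hearts by Lemma~\ref{L:wkc}, this is exactly Definition~\ref{D:ntpt}(2)(i). The main obstacle is this last step: because $\cE$ is only assumed to possess $D(\cE,{\cE}x)$-kernels and cokernels and not genuine ones, one cannot pass to an ambient abelian category as in the coherent case of Proposition~\ref{P:4.1}; every truncation and resolution must instead be produced through the weak universal property of $D(\cE,{\cE}x)$-kernels and controlled only up to the null system $\cN_{{\cE}x}$, and it is precisely here that the honest kernel supplied by the first bullet is needed to terminate the otherwise infinite $D(\cE,{\cE}x)$-kernel tower.
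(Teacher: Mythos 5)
Much of your outline is sound and close to the paper: the inclusion $\cL\cD^{\leq 0}_{(\cE,{\cE}x)}\subseteq \cR\cD^{\leq 0}_{(\cE,{\cE}x)}$ is indeed automatic from Lemma~\ref{L:mD=mK}, bullet (2) does follow from bullet (1) by passing to $\cE^\circ$ (the paper's ``dual argument''), terminating a $D(\cE,{\cE}x)$-kernel tower at an honest kernel does give an isomorphism in $D(\cE,{\cE}x)$, and your closing appeal to Proposition~\ref{KS} is exactly how the paper proves the $n$-tilting assertion. The genuine gap is the pivot carrying all the weight: the claimed equivalence between $\cR\cD^{\leq -n}_{(\cE,{\cE}x)}\subseteq \cL\cD^{\leq 0}_{(\cE,{\cE}x)}$ and ``every object of $\cL\cH(\cE,{\cE}x)$ has a representative in $K^{[-n,0]}(\cE)$''. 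In the direction you need for bullet (1) $\Rightarrow$ gap, this is off by two. The gap condition is equivalent to $\cL\cD^{\geq 1}_{(\cE,{\cE}x)}\subseteq \cR\cD^{\geq -n+1}_{(\cE,{\cE}x)}$, i.e.\ to the cone of $\tau^{\leq 0}_{\cL}(X^{\geq 0})\to X^{\geq 0}$ lying in $\cR\cD^{\geq -n+1}_{(\cE,{\cE}x)}$ for every $X^{\geq 0}\in K^{\geq 0}(\cE)$. If all you know is that the heart object $\tau^{\leq 0}_{\cL}(X^{\geq 0})$ has a representative $P^\point\in K^{[-n,0]}(\cE)$, then (using Lemma~\ref{L:mD=mK} to realize the map as a chain map) the cone lies only in $K^{\geq -n-1}(\cE)$, which yields $\cR\cD^{\leq -n-2}_{(\cE,{\cE}x)}\subseteq \cL\cD^{\leq 0}_{(\cE,{\cE}x)}$, not the gap $n$. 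What is really needed is that these particular heart objects admit representatives in degrees $[-(n-2),0]$, and this is precisely what the exact shape of bullet (1) delivers: since the top two entries of $K^{[-n+1,0]}$ are unconstrained, one applies the bullet to the $n$-term complex $[T^{-n+3}\to\cdots\to T^0\to X^0\to X^1]$, where $[\cdots \to T^{-1}\to T^0]$ is the $D(\cE,{\cE}x)$-kernel tower of $X^{\geq 0}$; the resulting honest kernel terminates that tower already in degree $-(n-2)$, the cone lands in $K^{\geq -n+1}(\cE)$, and the gap follows. This is the content of the paper's (terse) converse, and your reformulation in terms of length-$n$ resolutions of arbitrary heart objects loses exactly this index precision.

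The analogy with Proposition~\ref{P:4.1} that you invoke cannot repair this, because the dimension shift used there (exactness of $0\to \Ker\to E^0\to E^1\to \Coker\to 0$ plus projective dimension of $\Coker$ at most $n$ forcing projective dimension of $\Ker$ at most $n-2$) relies on the objects of $\cE$ being projective in the heart; this holds for the minimal exact structure, but for a general ${\cE}x$ the objects of $\cE$ merely generate $\cL\cH(\cE,{\cE}x)$ and no Schanuel-type comparison is available. The same issue undermines your other direction (``the length bound forces $d_K^{-n+1}$ to acquire an honest kernel''): the existence of some short representative of the extended tower does not hand you an object of $\cE$ together with the universal property of a kernel. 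The paper produces it intrinsically from the gap: in the truncation triangle of $K^{[-n+1,0]}$ the tower hypotheses give $\tau^{\geq -n+2}_{\cL}(K^{[-n+1,0]})\cong \tau^{\geq 0}_{\cL}(K^{[-n+1,0]})\in\cL\cH(\cE,{\cE}x)\subseteq \cR\cD^{\geq -n}_{(\cE,{\cE}x)}$, whence $\tau^{\leq -n+1}_{\cL}(K^{[-n+1,0]})\in \cL\cD^{\leq -n+1}_{(\cE,{\cE}x)}\cap\cR\cD^{\geq -n+1}_{(\cE,{\cE}x)}=\cE[n-1]$, and this object of $\cE$ is the sought kernel of $d_K^{-n+1}$. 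Both halves of your central equivalence need to be replaced by these triangle arguments; incidentally, the step you single out as the ``main obstacle'' (the Kashiwara--Schapira step) is the unproblematic one, since it runs verbatim as in Lemma~\ref{L:toQES} once the short resolutions are in hand.
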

\begin{proof}
Let us suppose that $n\geq 2$ and $\cR\cD^{\leq -n}_{(\cE,{\cE}x)}\subseteq \cL\cD^{\leq 0}_{(\cE,{\cE}x)}\subseteq \cR\cD^{\leq 0}_{(\cE,{\cE}x)}$.
Given  a complex 
$K^{[-n+1,0]}$
with $K^i=(\cE,{\cE}x)$-kernel of $d_K^{i+1}$ for any $i\leq -2$,
by the proof of Lemma~\ref{L:wkc} the complex
$\tau^{\leq -n+1}_{\cL}( K^{[-n+1,0]})$
 is constructed taking in degree $-n+1$ the  $(\cE,{\cE}x)$-kernel of $d_K^{-n+1}$ and taking in degrees
$i<-n+1$ the  $(\cE,{\cE}x)$-kernel of the differential $i+1$. Thus 
$\tau^{\geq -n+2}_{\cL}( K^{[-n+1,0]})\cong 
\tau^{\geq 0}_{\cL}( K^{[-n+1,0]})\in\cL\cH(\cE,{\cE}x)
$
since any term of this complex is  a $(\cE,{\cE}x)$-kernel of its successive differential
and
 the complex
$\xymatrix@-7pt{
K^{-n+1}\ar[r]^{d_K^{-n+1}}& K^{-n+2}\ar[r] &\cdots \ar[r]^{d_K^{-1}} & K^0\\ }$
is exact in $\cL\cH(\cE,{\cE}x)$.

By hypothesis $\cL\cH(\cE,{\cE}x)\subseteq \cR\cD^{\geq -n}_{(\cE,{\cE}x)}$ and
since $ K^{[-n+1,0]}\in \cR\cD_{(\cE,{\cE}x)}^{[-n+1,0]}$ we get
$\tau^{\leq -n+1}_{\cL}( K^{[-n+1,0]})\in\cR\cD^{\geq -n+1}_{(\cE,{\cE}x)}\cap
\cL\cD^{\leq -n+1}_{(\cE,{\cE}x)}=\cE[n-1]$.

The dual argument proves that  (2) holds true and  the sequence in  (2) is exact in
$\cR\cH(\cE,{\cE}x)$.

On the other side if (1) holds true  given $X^\point\in D(\cE,{\cE}x)$ we have
$\tau_\cL^{\geq 1}X^\point\cong \tau_\cL^{\geq 1}X^{\geq 0}
\subseteq \cR\cD^{\geq -n+1}_{(\cE,{\cE}x)}$
 (since $\tau_\cL^{\leq 0}X^{\geq 0}\in \cR\cD^{\geq -n}_{(\cE,{\cE}x)}$), hence 
$\cR\cD^{\leq -n}_{(\cE,{\cE}x)}\subseteq \cL\cD^{\leq 0}_{(\cE,{\cE}x)}\subseteq \cR\cD^{\leq 0}_{(\cE,{\cE}x)}$.
Therefore 
any object $K^\point\in\cL\cH(\cE,{\cE}x)$ can be represented as a complex 
$K^\point\in K^{\leq 0}(\cE)$ such that $\tau_{\cL}^{\leq -1}(K^\point)\in\cN_{{\cE}x}$
and so it can be represented by a complex 
\[
C(d_K^{-n},\dots,d_K^{-1}):=[
\xymatrix@-10pt{\Ker(d_K^{-n+1})\ar[r]^(0.6){d_K^{-n}} & K^{-n+1}\ar[r]^(0.6){d_K^{-n+1}}& K^{-n+2}\ar[r] &\cdots \ar[r]^{d_K^{-1}} & 
\stackrel{\point}{K^0} \\ }]
\]
such that $K^i$ is a $(\cE,{\cE}x)$-pre-kernel of $d_K^{i+1}$ for any $i\leq -2$.
The following distinguished triangle in $D(\cE,{\cE}x)$ provides a short exact sequence in $\cL\cH(\cE,{\cE}x)$
\[\xymatrix@-10pt{
0\ar[r] & C(0,d_K^{-n},\dots,d_K^{-2}) \ar[r] & K^0[0] \ar[r]&
C(d_K^{-n},\dots,d_K^{-1})\ar[r] & 0 \\ }
\]
which proves that $\cE$ generates $\cL\cH(\cE,{\cE}x)$.
Hence $K(\cE)/\cN_{{\cE}x}\simeq D(\cL\cH(\cE,{\cE}x))$ since the full subcategory $\cE$ in $\cL\cH(\cE,{\cE}x)$
satisfies the hypotheses of 
Proposition~\ref{KS}.
Dually $K(\cE)/\cN_{{\cE}x}\simeq D(\cR\cH(\cE,{\cE}x))$.
\end{proof}

\begin{definition}\label{D:n-qa}
A  projectively complete category $(\cE,{\cE}x)$ endowed with a Quillen exact structure
is called \emph{$n$-quasi-abelian} (for $n\geq 2$) if it admits $(\cE,{\cE}x)$-pre-kernels and 
$(\cE,{\cE}x)$-pre-cokernels
and one of the following equivalent conditions holds:
\begin{enumerate}
\item 
For any complex 
$
K^{[-n+1,0]}:=\xymatrix@-7pt{
K^{-n+1}\ar[r]^{d_K^{-n+1}}& K^{-n+2}\ar[r] &\cdots \ar[r]^{d_K^{-1}} & K^0\\ }$
such that $K^i$ is $(\cE,{\cE}x)$-pre-kernel of $d_K^{i+1}$ for any $i\leq -2$
the morphism $d_K^{-n+1}$ has a kernel in $\cE$.
\item
For any complex
$
C^{[-n+1,0]}:=\xymatrix@-7pt{
C^{-n+1}\ar[r]^{d_C^{-n+1}}&C^{-n+2}\ar[r] &\cdots \ar[r]^{d_C^{-1}} & C^0\\ }$
such that $C^i$ is $(\cE,{\cE}x)$-pre-cokernel of $d_C^{i-2}$ for any  $i\geq -n-1$
the morphism $d_C^{-1}$ has a cokernel in $\cE$.
\end{enumerate}

Whenever the exact structure is not specified we will consider $\cE$ endowed with
its maximal Quillen exact structure.
\end{definition}
Theorem (see \ref{n:lista}) and
Definition~\ref{D:2tiltorcl} suggest the following $n$-level
generalization of the notion
of $1$-tilting torsion class in an abelian category:

\begin{definition}\label{D:ntiltorcl}
Let $\cA$ be an abelian category.
A full subcategory $\cE\hookrightarrow \cA$ is a
 \emph{$n$-tilting torsion class} if
\begin{enumerate}
\item $\cE$ cogenerates $\cA$;
 \item $\cE$ is extension closed  in $\cA$, hence
 it is endowed with the Quillen exact structure ${\cE}x$ whose conflations are
 sequences in $\cE$ which are exact in $\cA$;
\item $\cE$ has $(\cE,{\cE}x)$-pre-kernels;
\item for any exact sequence in $\cA$: 
$
0\to A\to  X_{1}\stackrel{d_X^{1}}\to  \cdots \stackrel{d_X^{n-1}} \to
X_n\to B\to 0$
with $X_{i}\in \cE$ for any $1\leq i\leq n$ \and $A, B\in \cA$
we have $B\in\cE$.
\end{enumerate}

Dually a \emph{$n$-cotilting torsion-free class} in $\cA$ is  a full generating 
extension closed subcategory $\cE$ of $\cA$
admitting $(\cE,{\cE}x)$-cokernels and  such that 
for any exact sequence in $\cA$: 
$
0\to A\to  Y_{1}\stackrel{d_Y^{1}}\to  \cdots \stackrel{d_Y^{n-1}} \to
Y_n\to B\to 0$
with $Y_i\in \cE$  
we have $A\in\cE$.
\end{definition}
%

\begin{remark}\label{R6}
Given $(\cE,{\cE}x)$ a $n$-quasi-abelian category 
by Lemma~\ref{L:6.4} 
$\cE$ is a $n$-tilting torsion class in $\cR\cH(\cE,{\cE}x)$
and  $(\cR\cD_{(\cE,{\cE}x)},\cL\cD_{(\cE,{\cE}x)})$ is
a $n$-tilting
pair of $t$-structures
 on $D(\cE,{\cE}x)$.

On the other hand, given a $n$-tilting
pair of $t$-structures $(\cD,\cT)$ on $\cC$ 
by Proposition~\ref{P:DT=RL} and Lemma~\ref{L:wkc},  the category
$\cE=\cT^{\leq 0}\cap \cD^{\geq 0}$ (with the Quillen exact structure induced by $D(\cE,{\cE}x)$) admits $(\cE,{\cE}x)$-pre-kernels
and $(\cE,{\cE}x)$-pre-cokernels, hence it is  $n$-quasi-abelian. 
\end{remark}
%

%

\begin{theorem}\label{T:6.8}
Any $n$-tilting torsion class $\cE$ 
in $\cA$, endowed with the Quillen exact structure induced by $\cA$, is  $n$-quasi-abelian;
$\cA\simeq \cR\cH(\cE,{\cE}x)$ and 
${K(\cE)\over{\cN_{{\cE}x}}} \simeq D(\cA)$.
%
\end{theorem}
\begin{proof}
Conditions (1) and (4) of Definition~\ref{D:ntiltorcl} imply that 
$\cE$ satisfies  the hypotheses of Proposition~\ref{KS} and so
$K(\cE)/\cN_{{\cE}x}\simeq D(\cA)$. Since
$D^{\geq 0}(\cA)\simeq \cR\cD_{(\cE,{\cE}x)}^{\geq 0}$
we obtain that $\cR\cH(\cE,{\cE}x)\simeq \cA$.

By Lemma~\ref{L:wkc} $\cE$ has $(\cE,{\cE}x)$-pre-cokernels and by point (3) of Definition~\ref{D:ntiltorcl} 
$\cE$ admits $(\cE,{\cE}x)$-pre-kernels.
Moreover by  Lemma~\ref{L:6.4} the complex
$
C^{[-n+1,0]}:=\xymatrix@-7pt{
C^{-n+1}\ar[r]^{d_C^{-n+1}}&C^{-n+2}\ar[r] &\cdots \ar[r]^{d_C^{-1}} & C^0\\ }$
 is exact in $\cA$ if and only if $C^i$ is a $(\cE,{\cE}x)$-pre-kernel of $d_C^{i-2}$ for any  $i\geq -n-1$, hence by Definition~\ref{D:ntiltorcl},
the morphism $d_C^{-1}$ has a cokernel in $\cE$.
\end{proof}
%

\begin{corollary}\label{Cor:n3.5}
Let $\cD$ be the natural $t$-structure on the triangulated category $D(\cH_\cD)$
and $i:\cE\to \cH_\cD$ a $n$-tilting torsion class on $\cH_\cD$.
Hence $\cT^{\leq 0}:=\cD^{\leq -n}\star \cE\star \cE[1]\star \cdots \star\cE[n-1]$ 
is an aisle in $D(\cH_\cD)$ such that $\cE=\cH_\cD\cap\cH_\cT$ and the
pair $(\cD,\cT)$ is a $n$-tilting pair of $t$-structures.
We will say that the $t$-structure $\cT$ is obtained by tilting
$\cD$ with respect to the $n$-tilting torsion class $\cE$.
\end{corollary}
%
\begin{proof}
By Theorem~\ref{T:6.8},
the $n$-tilting torsion class $\cE$ is a $n$-quasi-abelian category and
$(\cR\cD_{(\cE,{\cE}x)},\cL\cD_{(\cE,{\cE}x)})$ is a
$n$-tilting
pair of $t$-structures
 on $D(\cE,{\cE}x)\simeq D(\cH_{\cD})$.
The right $t$-structure coincides with the natural one on $D(\cH_\cD)$ (i.e.,
 $\cR\cD_{(\cE,{\cE}x)}=\cD$) while the left $t$-structure satisfies
$\cL\cD_{(\cE,{\cE}x)}^{\leq 0}\subseteq \cT^{\leq 0}$.
On the other hand,
since $\cD^{\leq -n}\simeq \cR\cD_{(\cE,{\cE}x)}^{\leq -n}\subseteq \cL\cD_{(\cE,{\cE}x)}^{\leq 0}$
and $\cE[i]\subseteq \cL\cD_{(\cE,{\cE}x)}^{\leq 0}$, for any $1\leq i\leq n-1$,
we deduce that $\cT^{\leq 0}\subseteq \cL\cD_{(\cE,{\cE}x)}^{\leq 0}$.
This proves that $\cT^{\leq 0}\simeq \cL\cD_{(\cE,{\cE}x)}^{\leq 0}$ is an aisle,  $\cE=\cH_\cD\cap\cH_\cT$
and $(\cD,\cT)=(\cR\cD_{(\cE,{\cE}x)},\cL\cD_{(\cE,{\cE}x)})$ is a $n$-tilting pair
of $t$-structures on $D(\cH_\cD)$.
\end{proof}

\begin{theorem}\label{T:HvsCF}
Let $(\cE,{\cE}x)$ be a $n$-quasi-abelian category. One has the following equivalences of categories
\[
\cL\cH(\cE,{\cE}x)\simeq {\rfp\cE\over \reff_{{\cE}x}\cE}; 
\qquad
\cR\cH(\cE,{\cE}x)\simeq \left({\cE\lfp\over \cE\leff_{{\cE}x}}\right)^\circ.
\]
In the special case of an abelian category  endowed with its maximal Quillen exact structure
$(\cA,{\cA}x_{\rm max})$, these equivalences give the Auslander's formulas:
\[
\cA\simeq {\rcoh\cA\over \reff\cA};\qquad 
\cA\simeq  \left(\cA\lcoh\over \cA\leff\right)^\circ .
\]
\end{theorem}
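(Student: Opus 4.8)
The plan is to construct, for a $n$-quasi-abelian category $(\cE,{\cE}x)$, an explicit equivalence between $\cL\cH(\cE,{\cE}x)$ and the Gabriel quotient $\rfp\cE/\reff_{{\cE}x}\cE$, and then dualise for the right heart. I would first recall the two structural descriptions already available: by Lemma~\ref{L:wkc} every object of $\cL\cH(\cE,{\cE}x)$ is represented by a complex $K^\point\in K^{\leq 0}(\cE)$ with $K^i$ a $D(\cE,{\cE}x)$-kernel of $d_K^{i+1}$ for $i\leq -2$, so in particular the functor $I_\cL\colon\cE\to\cL\cH(\cE,{\cE}x)$ of \ref{num:lefttstr} realises $\cE$ as a generating subcategory (by Lemma~\ref{L:6.4}), and by Remark~\ref{R4.3} the quotient functor $R_{(\cE,{\cE}x)}\colon\cE\to\rfp\cE/\reff_{{\cE}x}\cE$ is fully faithful. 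The strategy is to compare these two ``completions'' of $\cE$ via their universal properties.

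First I would define the comparison functor. The Yoneda embedding $\cE\hookrightarrow\rfp\cE$ followed by the localisation sends $\cE$ fully faithfully into $\rfp\cE/\reff_{{\cE}x}\cE$ (Remark~\ref{R4.3}), and this target is abelian in which the image of $\cE$ is projective and generating; moreover every object has a presentation of length at most $n$ coming from a $D(\cE,{\cE}x)$-kernel chain. Dually, by Lemma~\ref{L:wkc} and Lemma~\ref{L:6.4}, $\cL\cH(\cE,{\cE}x)$ is abelian, $\cE$ sits inside it as a generating class, and every object is a cokernel-type complex $C(d_K^{-n},\dots,d_K^{-1})$ admitting a $\cE$-resolution of length at most $n$. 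The key step is that both categories solve the same universal problem: each is the abelian category freely generated by the additive category $\cE$ subject to the exactness constraints imposed by ${\cE}x$ (the conflations become short exact sequences). Concretely, I would send the coherent functor $\Coker_{\rfp\cE}(\cE_X\xrightarrow{d}\cE_Y)$ to the complex $[\dots\to K^{-1}\to X\xrightarrow{d}\stackrel{\point}{Y}]$ obtained by iterating $D(\cE,{\cE}x)$-kernels, checking that an effaceable functor (cokernel of a deflation, by Proposition~\ref{P4.1}) goes to an object that is acyclic in $D(\cE,{\cE}x)$, hence zero in $\cL\cH(\cE,{\cE}x)$. This kills $\reff_{{\cE}x}\cE$ and so factors through the quotient.

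The functor so obtained is essentially surjective because every object of $\cL\cH(\cE,{\cE}x)$ is represented by such a complex, and it is fully faithful because on the generating projective class $\cE$ both Hom-computations reduce, by Remark~\ref{R4.3} and by $\Hom_{\cL\cH}(I_\cL E_1,I_\cL E_2)=\cE(E_1,E_2)$, to the same colimit over deflation covers of $E_1$ — this is exactly where the calculus of right fractions of Lemma~\ref{L:5.1} (via Proposition~\ref{P:SerreSub}) matches the $D(\cE,{\cE}x)$-kernel resolutions. I expect \emph{this last identification of Hom-groups} to be the main obstacle: one must verify that the filtered system of inflations in $\rfp\cE$ with effaceable cokernel corresponds precisely to the deflation covers appearing in the definition of $D(\cE,{\cE}x)$-kernel, so that morphisms between representing complexes in $D(\cE,{\cE}x)$ agree with morphisms of fractions. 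The right-heart equivalence $\cR\cH(\cE,{\cE}x)\cong(\cE\lfp/\cE\leff_{{\cE}x})^\circ$ follows by applying the entire argument in $\cE^\circ$ (using $D(\cE,{\cE}x)$-cokernels and left filtering, the opposite of Proposition~\ref{P:SerreSub}). Finally, when $\cA$ is abelian with its maximal exact structure, kernels and cokernels exist genuinely so $D(\cE,{\cE}x)$-kernels are ordinary kernels, $\cL\cH=\cR\cH=\cA$, and the two displayed equivalences specialise to the Auslander formulas $\cA\cong\rcoh\cA/\reff\cA$ and $\cA\cong(\cA\lcoh/\cA\leff)^\circ$ of \cite{KRdaf}.
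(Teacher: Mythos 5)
Your skeleton does match the paper's: a comparison functor from $\rfp\cE$ to $\cL\cH(\cE,{\cE}x)$ which kills exactly the effaceable functors, is essentially surjective because $\cE$ generates the heart with resolutions of length at most $n$, and therefore descends to the quotient; duality gives the right heart, and specialization to $(\cA,{\cA}x_{\rm max})$ gives the Auslander formulas. (The paper obtains the functor, with well-definedness and functoriality for free, from the universal property \ref{N:A.8} of the Freyd category as the unique cokernel-preserving extension of $\cE\to\cL\cH(\cE,{\cE}x)$, rather than by your explicit object-wise assignment, which would itself need the verification you postpone.) However, one of your load-bearing claims is false: the image of $\cE$ is \emph{not} projective in $\rfp\cE/\reff_{{\cE}x}\cE$, nor in $\cL\cH(\cE,{\cE}x)$, for a general exact structure. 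In the Auslander-formula case itself the image of $\cA$ in $\rcoh\cA/\reff\cA\cong\cA$ is all of $\cA$, certainly not projective; projectivity of $\cE$ in the heart holds only for the minimal (split) structure, i.e.\ in $\cL\cK(\cE)$. Likewise, abelianness of $\rfp\cE/\reff_{{\cE}x}\cE$ is not available a priori: $\rfp\cE$ is not abelian unless $\cE$ is right coherent, and Proposition~\ref{P:SerreSub} with Lemma~\ref{L:5.1} only provide a localization with a calculus of right fractions. Both facts are consequences of the theorem, not ingredients, so the ``both sides solve the same universal problem / projective-generator'' route collapses.

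This matters because full faithfulness on the generating subcategory $\cE$ (Remark~\ref{R4.3}) plus generation does \emph{not} imply full faithfulness of the comparison functor: without projectivity of the image of $\cE$ you cannot lift a morphism of the targets through presentations. The decisive step, which you correctly flag as ``the main obstacle'' but never carry out, is the paper's fullness computation for \emph{general} objects: a morphism in $\cL\cH(\cE,{\cE}x)$ between $C(d_K^{-n},\dots,d_K^{-1})$ and $C(D_L^{-n},\dots,D_L^{-1})$ is a roof through some $C(e_M^{-n},\dots,e_M^{-1})$ whose leg $\varphi$ has mapping cone in $\cN_{{\cE}x}$; since that cone lies in $K^{\leq 0}(\cE)$, its degree $-1$ differential $(d_K^{-1},\varphi^0)\colon K^{-1}\oplus M^0\to K^0$ must be a deflation, whence $\Coker_{\rfp\cE}(e_M^{-1})\hookrightarrow \Coker_{\rfp\cE}(d_K^{-1})$ is a weak isomorphism (an inflation with effaceable cokernel), so the derived-category roof is literally a right fraction in $\rfp\cE/\reff_{{\cE}x}\cE$. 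One also needs the converse of your vanishing statement (a functor $\Coker_{\rfp\cE}(w)$ is sent to zero \emph{only if} $w$ is a deflation) to get faithfulness of the induced functor. Without these steps your proposal establishes essential surjectivity and the factorization through the quotient, but not that the induced functor is full and faithful, so as it stands the proof is incomplete.
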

\begin{proof}
The second statement is dual to the first one.
By the universal property of the Freyd category $\rfp\cE$,
there exists a unique functor $L$ cokernel preserving such that the diagram below commutes:
{\small\[\xymatrix@-10pt{
& \cE \ar[ld]\ar[rd] & \\
\rfp\cE \ar[rr]^(0.4){Q_L} & & \cL\cH(\cE,{\cE}x).\\
}\]}
If $F=\Coker_{\rfp\cE}(f)$, we have $Q_L(F)=\Coker_{ \cL\cH(\cE,{\cE}x)}(f)$.
The functor $Q_L$ is essentially surjective since 
any object $L\in\cL\cH(\cE,{\cE}x)$ 
admits a resolution $0\to  K^{-n} \stackrel{d^{-n}_K}\to \cdots \stackrel{d^{-1}_K}\to K^0\to L\to 0$
(due to the fact that $\cE$ is a $n$-cotilting torsion-free class in $\cL\cH(\cE,{\cE}x)$),
thus $L=\Coker_{\cL\cH(\cE,{\cE}x)}(d^{-1}_k)$ and 
$L\cong[K^{-n} \stackrel{d^{-n}_K}\to \cdots \stackrel{d^{-1}_K}\to \stackrel{\point}{K^0}]
=:C(d_K^{-n},\dots,d_K^{-1})$
in $D(\cE,{\cE}x)$.

We notice that $K=\Coker_{\rfp\cE}(g)$ satisfies
$Q_L(K)=0$ if and only if $g$ is a deflation in $\cE$, hence
$K\in\reff_{{\cE}x}\cE$.
This proves that the functor $Q_L$ induces a canonical faith and essentially surjective functor
$\overline{Q_L}$ such that the following diagram commutes:
{\small\[\xymatrix@-10pt{
& &\cE \ar[lld]\ar[rrd] & &\\
\rfp\cE \ar[rr]^{\pi}  \ar@/_1.3pc/[rrrr]_{Q_L}&&{\rfp\cE\over \reff_{{\cE}x}\cE}\ar[rr]^{\overline{Q_L}}&& \cL\cH(\cE,{\cE}x).\\
}\]}

It remains to prove that $\overline{Q_L}$ is full.
Given  $K$ and $L$  in $\rfp\cE$ with  presentations
$K^{-1}\stackrel{d_K^{-1}}\to K^0\to K\to 0$ and 
$L^{-1}\stackrel{d_L^{-1}}\to L^0\to L\to 0$,
their images under  $Q_L$ are
$C(d_K^{-n},\dots,d_K^{-1})$ and $C(d_L^{-n},\dots,d_L^{-1})$ resp.. 
A morphism $Q_L(K)\stackrel{h}\to Q_L(L)$ is 
a
morphism in $D(\cE,{\cE}x)$; hence
there exists a complex
$C^\point\in K^{\leq 0}(\cE)$ (up to truncation)
and morphisms
$
\xymatrix@-10pt{Q_L(K)  &C^\point\ar[l]_(0.4){\varphi}^(0.4){\simeq} \ar[r]^(0.4){\tilde{h}} & Q_L(L) \\ }
$
such that the mapping cone 
$M(\varphi)\in \cN_{{\cE}x}$.
The zigzag
\begin{equation}\label{E:c4}
\xymatrix{\pi(K)& &\Coker_{\rfp\cE}(d^{-1}_{C^\point})\ar[ll]_(0.6){\Coker_{\rfp\cE}(\varphi^0)} \ar[rr]^(0.6){\Coker_{\rfp\cE}(\tilde{h})} & &\pi(L) \\ }
\end{equation}
viewed as a
a morphism in ${\rfp\cE\over \reff_{{\cE}x}\cE}$, is sent to $h$ by  ${\overline{Q_L}}$.

Since $M(\varphi)\in \cN_{{\cE}x}\cap K^{\leq 0}(\cE)$, its $-1$ differential 
$K^{-1}\oplus  C^0\stackrel{(d_K^{-1}, \varphi^0)}\longrightarrow  K^0$
has to be a deflation and
the sequence $K^{-2}\oplus C^{-1}\to K^{-1}\oplus C^0\to K^0$
is exact. Therefore  the sequence
\[
0 \to \Coker_{\rfp\cE}(d^{-1}_{C^\point})\rightarrowtail  \Coker_{\rfp\cE}(d_K^{-1})
\twoheadrightarrow \Coker_{\rfp\cE}(d_K^{-1}, \varphi^0)\to 0
\]
is a conflation and $\Coker_{\rfp\cE}(d_K^{-1}, \varphi^0)\in \reff_{{\cE}x}\cE$ which proves that \eqref{E:c4} is a morphism in
 ${\rfp\cE\over \reff_{{\cE}x}\cE}$ and by construction it maps to $h$ by  ${\overline{Q_L}}$.
\end{proof}
%

\begin{corollary}
Let $(\cE,{\cE}x)$ be a $n$-quasi-abelian category and
$\overline{{\cE}x}$ a Quillen exact structure on $\cE$ finer than ${\cE}x$.
Hence the class 
\[\overline{ \reff_{{\cE}x}\cE}:=\{ \Coker_{\cL\cH(\cE,{\cE}x)}(g) \; |\; g \hbox{ is a deflation in }
\overline{{\cE}x}\}\]
is a Serre subcategory of $\cL\cH(\cE,{\cE}x)$ and
$\cL\cH(\cE,\overline{{\cE}x})\simeq {\cL\cH(\cE,{\cE}x)\over \overline{ \reff_{{\cE}x}\cE}}$.
\end{corollary}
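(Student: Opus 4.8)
The plan is to obtain the statement from Theorem~\ref{T:HvsCF}, applied to both exact structures, combined with the transitivity of localization. The hypothesis that $\overline{{\cE}x}$ is finer than ${\cE}x$ means that every conflation of ${\cE}x$ is a conflation of $\overline{{\cE}x}$, and I would first record its two consequences: every deflation of ${\cE}x$ is a deflation of $\overline{{\cE}x}$, whence $\reff_{{\cE}x}\cE\subseteq \reff_{\overline{{\cE}x}}\cE$ inside $\rfp\cE$; and every ${\cE}x$-acyclic complex is $\overline{{\cE}x}$-acyclic, whence $\cN_{{\cE}x}\subseteq \cN_{\overline{{\cE}x}}$ and $D(\cE,\overline{{\cE}x})$ is a Verdier localization of $D(\cE,{\cE}x)$. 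Because a $D(\cE,{\cE}x)$-kernel (resp.\ cokernel) is a fortiori a $D(\cE,\overline{{\cE}x})$-kernel (resp.\ cokernel), $\cE$ still has $D(\cE,\overline{{\cE}x})$-kernels and cokernels, so the left $t$-structure and its heart $\cL\cH(\cE,\overline{{\cE}x})$ exist (Lemma~\ref{L:wkc}); moreover $\cE$ generates this heart, so Theorem~\ref{T:HvsCF} yields both $\cL\cH(\cE,{\cE}x)\cong \rfp\cE/\reff_{{\cE}x}\cE$ and $\cL\cH(\cE,\overline{{\cE}x})\cong \rfp\cE/\reff_{\overline{{\cE}x}}\cE$.

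Next I would factor the two localizations. By Remark~\ref{R4.3} the canonical functor $R_{(\cE,{\cE}x)}\colon \rfp\cE\to \cL\cH(\cE,{\cE}x)$ is the localization at the weak isomorphisms $\Sigma_{\reff_{{\cE}x}\cE\subset \rfp\cE}$, and likewise $R_{(\cE,\overline{{\cE}x})}$ for $\overline{{\cE}x}$. The inclusion $\reff_{{\cE}x}\cE\subseteq \reff_{\overline{{\cE}x}}\cE$ gives $\Sigma_{\reff_{{\cE}x}\cE\subset \rfp\cE}\subseteq \Sigma_{\reff_{\overline{{\cE}x}}\cE\subset \rfp\cE}$, so $R_{(\cE,\overline{{\cE}x})}$ factors through $R_{(\cE,{\cE}x)}$ as $Q\circ R_{(\cE,{\cE}x)}$ for a unique $Q\colon \cL\cH(\cE,{\cE}x)\to \cL\cH(\cE,\overline{{\cE}x})$, which coincides with the functor induced on hearts by the Verdier localization $D(\cE,{\cE}x)\to D(\cE,\overline{{\cE}x})$: the latter carries $\cL\cD^{\leq 0}_{(\cE,{\cE}x)}$ and $\cL\cD^{\geq 0}_{(\cE,{\cE}x)}$ into their $\overline{{\cE}x}$-analogues (all described by complexes concentrated in fixed degrees), hence is $t$-exact and $Q$ is exact. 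Since every functor in sight preserves cokernels and sends the representable $\cE_E$ to the image of $E$, one has $R_{(\cE,{\cE}x)}(\Coker_{\rfp\cE}(w))=\Coker_{\cL\cH(\cE,{\cE}x)}(w)$; letting $w$ range over the deflations of $\overline{{\cE}x}$ identifies the essential image $R_{(\cE,{\cE}x)}(\reff_{\overline{{\cE}x}}\cE)$ with the class $\overline{\reff_{{\cE}x}\cE}$ of the statement.

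It then remains to show that $\overline{\reff_{{\cE}x}\cE}$ is a Serre subcategory of $\cL\cH(\cE,{\cE}x)$ and that $Q$ realises the Gabriel quotient by it. I would identify $\overline{\reff_{{\cE}x}\cE}$ with $\ker Q$: if $w$ is an $\overline{{\cE}x}$-deflation it is an epimorphism in $\cL\cH(\cE,\overline{{\cE}x})$, so $Q\,\Coker_{\cL\cH(\cE,{\cE}x)}(w)=\Coker_{\cL\cH(\cE,\overline{{\cE}x})}(w)=0$; conversely, writing an object of $\ker Q$ as $\Coker_{\cL\cH(\cE,{\cE}x)}(f)$ with $f\colon E_1\to E_2$ in $\cE$, the vanishing $\Coker_{\cL\cH(\cE,\overline{{\cE}x})}(f)=0$ means, through $\cL\cH(\cE,\overline{{\cE}x})\cong \rfp\cE/\reff_{\overline{{\cE}x}}\cE$, that $\Coker_{\rfp\cE}(f)\in \reff_{\overline{{\cE}x}}\cE$, whence $f$ is an $\overline{{\cE}x}$-deflation by Remark~\ref{R5.1} and $\Coker_{\cL\cH(\cE,{\cE}x)}(f)\in \overline{\reff_{{\cE}x}\cE}$. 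As the kernel of the exact functor $Q$, the class $\overline{\reff_{{\cE}x}\cE}$ is closed under subobjects, quotients and extensions, hence Serre. Finally the inclusion $\Sigma_{\reff_{{\cE}x}\cE\subset \rfp\cE}\subseteq \Sigma_{\reff_{\overline{{\cE}x}}\cE\subset \rfp\cE}$ exhibits $\cL\cH(\cE,\overline{{\cE}x})\cong \rfp\cE/\reff_{\overline{{\cE}x}}\cE$ as the two-step localization of $\cL\cH(\cE,{\cE}x)\cong \rfp\cE/\reff_{{\cE}x}\cE$ at the image of $\Sigma_{\reff_{\overline{{\cE}x}}\cE\subset \rfp\cE}$; the ambient category now being abelian and the kernel $\overline{\reff_{{\cE}x}\cE}$ Serre, this image localization is the ordinary Gabriel quotient $\cL\cH(\cE,{\cE}x)/\overline{\reff_{{\cE}x}\cE}$, which gives the claimed equivalence.

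I expect the delicate point to be this last transitivity step, since the intermediate category $\rfp\cE$ is not abelian in general and Theorem~\ref{T:HvsCF} only presents the hearts as localizations of $\rfp\cE$ by right filtering subcategories with a calculus of right fractions (Lemma~\ref{L:5.1}), rather than as Serre quotients of an abelian category. I would resolve this by transporting everything into the abelian heart $\cL\cH(\cE,{\cE}x)$, where a right filtering subcategory is automatically Serre (conditions (1)--(2) of Definition~\ref{D:rfil} reduce to closure under subobjects, quotients and extensions); the identification $\ker Q=\overline{\reff_{{\cE}x}\cE}$, which rests on Remark~\ref{R5.1}, is precisely what licenses replacing the two composed right-fraction localizations by a single Gabriel quotient. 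A secondary point to verify is that the hypotheses of Theorem~\ref{T:HvsCF} transfer to $\overline{{\cE}x}$; here only the existence of $D(\cE,\overline{{\cE}x})$-kernels and the generation of $\cL\cH(\cE,\overline{{\cE}x})$ by $\cE$ are used, both of which survive enlarging the exact structure.
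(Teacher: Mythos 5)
The paper itself gives no proof of this corollary (it is stated as immediate from Theorem~\ref{T:HvsCF}), and your overall architecture --- apply Theorem~\ref{T:HvsCF} to both exact structures, factor the two localizations of $\rfp\cE$, identify the kernel of the comparison functor $Q$ with $\overline{\reff_{{\cE}x}\cE}$ via Remark~\ref{R5.1}, and conclude by transitivity --- is exactly the intended deduction. However, there is a genuine gap at the step on which your whole argument pivots: the claim that the Verdier localization $V\colon D(\cE,{\cE}x)\to D(\cE,\overline{{\cE}x})$ is $t$-exact for the left $t$-structures because the aisle and co-aisle are ``all described by complexes concentrated in fixed degrees, hence $Q$ is exact''. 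That description is correct for the aisle $\cL\cD^{\leq 0}$ (and for the co-aisle $\cR\cD^{\geq 0}$ of the \emph{right} $t$-structure), but it is false for the co-aisle of the left $t$-structure: $\cL\cD^{\geq 0}_{(\cE,{\cE}x)}$ is by definition an orthogonal class $(\cL\cD^{\leq -1}_{(\cE,{\cE}x)})^{\perp}$, and orthogonality is not preserved under a Verdier quotient, since Hom groups acquire new fractions --- this asymmetry between left and right $t$-structures is precisely the phenomenon the whole paper is built around. Preservation of the aisle only gives right exactness of $Q$ (which indeed follows formally from $Q\circ R_{(\cE,{\cE}x)}=R_{(\cE,\overline{{\cE}x})}$ and the right-fraction calculus). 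Left exactness is what you actually use both for closure of $\ker Q=\overline{\reff_{{\cE}x}\cE}$ under subobjects (the nontrivial part of being Serre) and for identifying the composed right-fraction localization with the Gabriel quotient, so the gap is not cosmetic.

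The missing claim is true, but it needs a real argument. One repair: since every $D(\cE,{\cE}x)$-kernel is a $D(\cE,\overline{{\cE}x})$-kernel, the construction of the left truncation functors in Lemma~\ref{L:wkc} for $\overline{{\cE}x}$ may be carried out with the very same complexes that compute them for ${\cE}x$; consequently, for $X\in\cL\cH(\cE,{\cE}x)$ one gets $\tau^{\leq -1}_{\cL,\overline{{\cE}x}}(X)\cong\tau^{\leq -1}_{\cL,{\cE}x}(X)\in\cN_{{\cE}x}\subseteq\cN_{\overline{{\cE}x}}$, so $V$ carries $\cL\cH(\cE,{\cE}x)$ into $\cL\cH(\cE,\overline{{\cE}x})$, and then $Q=V|_{\cL\cH(\cE,{\cE}x)}$ is exact because a triangle of heart objects is sent to a triangle of heart objects. (Alternatively, when $\cE$ is right coherent both hearts are Serre quotients of the abelian category $\rcoh\cE$ by Proposition~\ref{P:SerreSub} and Theorem~\ref{T:HvsCF}, and the corollary reduces to the third isomorphism theorem for Serre quotients; it is exactly in the non-coherent case that the appeal to ``fixed degrees'' silently fails.) With this point repaired, the remainder of your proof --- the identification of $\ker Q$, essential surjectivity, and the two-step localization --- goes through, and your secondary concern about transferring the hypotheses of Theorem~\ref{T:HvsCF} to $\overline{{\cE}x}$ is handled adequately.
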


\begin{corollary}
Let $\cE$ be a $1$-quasi-abelian category. Hence
\[
\cL\cH(\cE)\simeq {\rcoh\cE\over \reff\cE}
\qquad
\cR\cH(\cE)\simeq \left({\cE\lcoh\over \cE\leff}\right)^\circ.
\]
In this case the Serre subcategories of effaceable functors are:
\[
\reff\cE:=\{\Coker_{\rcoh\cE}(q)\; |\; q \hbox{ is a cokernel map in } {\cE}\}\]
\[
\cE\leff:=\{\Coker_{\cE\lcoh}(i)\; |\; i \hbox{ is a kernel map in } {\cE}\}
\]
since any cokernel map is a deflation (resp. any kernel map is an inflation)
if and only if $\cE$ is a $1$-quasi-abelian category.
\end{corollary}

\begin{remark}
Let consider $(\cE,{\cE}x)$  a $n$-quasi-abelian category, with $n\geq 3$,
which is not a $n-1$-quasi-abelian category (i.e.,
such that $\cR\cD^{\leq -n}_{(\cE,{\cE}x)}\subseteq \cL\cD^{\leq 0}_{(\cE,{\cE}x)}$
but $\cR\cD^{\leq -n+1}_{(\cE,{\cE}x)}\not\subseteq \cL\cD^{\leq 0}_{(\cE,{\cE}x)}$). Hence for any Quillen exact structure $\overline{{\cE}x}$ on $\cE$
finer than ${\cE}x$ (i.e., which contains the conflations of ${\cE}x$)
we have that $(\cE,\overline{{\cE}x})$ is a $n$-quasi-abelian category
which is not a $n-1$-quasi-abelian category.
Otherwise if  $\cR\cD^{\leq -n+1}_{(\cE,\overline{{\cE}x})}\subseteq 
\cL\cD^{\leq 0}_{(\cE,\overline{{\cE}x})}$,
any object $L\in\cL\cH(\cE,{\cE}x)$
which has a presentation
$0\to K^{-n}\stackrel{d_K^{-n}}\to\cdots \stackrel{d_K^{-1}}\to K^0\to L\to 0$ would short in 
$\cL\cH(\cE,\overline{{\cE}x})$ i.e.,
$d_K^{-n+2}$ would have a kernel (computed in $\cL\cH(\cE,\overline{{\cE}x})$)
which belongs to  $\cE$ but (since $\cE$ is fully faithful  in $\cL\cH(\cE,{\cE}x)$
this would be a kernel for $d_K^{-n+2}$ also in $\cL\cH(\cE,{\cE}x)$
which contradicts the hypothesis.

So for $n\geq 3$ the index $n$ of quasi-abelianity for $\cE$ is independent from the Quillen exact structure on $\cE$, hence it can be computed using the maximal Quillen exact structure.
\end{remark}

We are now able to prove the $n$ version of Theorem~\ref{Th:1tilt}.


\begin{theorem}\label{Th:ntilt}
Let $(\cE,{\cE}x)$ be an additive category endowed with a Quillen exact structure. The following properties are equivalent:
\begin{enumerate}
\item $\cE$ is a $n$-cotilting torsion-free class in an abelian category $\cA$;
\item $\cE$ is a $n$-tilting torsion class in an abelian category $\cA'$;
\item $(\cE,{\cE}x)$  is a $n$-quasi-abelian category;
\item $\cE$ is the intersection of the hearts $\cH_\cD\cap\cH_\cT$
of a $n$-tilting pair of $t$-structures in $D(\cE,{\cE}x)$.
\end{enumerate}
Moreover $\cA\simeq \cL\cH(\cE,{\cE}x)$, $\cA'\simeq\cR\cH(\cE,{\cE}x)$ and $(\cD,\cT)=(\cR\cD_\cE,\cL\cD_\cE)$.
\end{theorem}
%
%
\begin{proof}
If $(\cD,\cT)$ is a $n$-tilting pair of $t$-structures in $\cC$,
by Remark~\ref{R6} 
$\cE=\cH_{\cD}\cap \cH_\cT$ is a $n$-quasi-abelian category
and by Proposition~\ref{P:DT=RL} $(\cD,\cT)=(\cR\cD_{(\cE,{\cE}x)},\cL\cD_{(\cE,{\cE}x)})$.

Let $(\cE,{\cE}x)$ be a $n$-quasi-abelian category.
By Lemma~\ref{L:6.4} and Remark~\ref{R6}  the pair of $t$-structures $(\cR\cD_{(\cE,{\cE}x)},\cL\cD_{(\cE,{\cE}x)})$ is  $n$-tilting
 and 
$\cE$ is a $n$-tilting torsion class in $\cR\cH(\cE,{\cE}x)$
(resp. $\cE$ is a $n$-cotilting torsion-free class in $\cL\cH(\cE,{\cE}x)$).

By Theorem~\ref{T:6.8}, if $\cE$ is a $n$-tilting torsion class in $\cA'$ (resp. $n$-cotilting torsion-free class in $\cA$), 
$\cA'\simeq\cR\cH(\cE,{\cE}x)$ (resp. $\cA\simeq\cL\cH(\cE,{\cE}x)$) and
$\cE\simeq \cR\cH(\cE,{\cE}x)\cap \cL\cH(\cE,{\cE}x)$ which concludes the proof.
\end{proof}

We can visualize  the links between properties (1) to (4) by the following diagram:
\begin{equation*}\tcbhighmath{\xymatrix@-20pt{
\{\hbox{$n$-tilting torsion classes}\}\ar@{<->}@<-7ex>[dddd]
\ar@{<->}[rr] && \{\hbox{$n$-cotilting torsion-free classes}\}\\
  \qquad\qquad\cE \hbox{ in } \cR\cH(\cE,{\cE}x)\ar@{<->}[rr]& & \cE \hbox{ in } \cL\cH(\cE,{\cE}x)
\ar@<-5ex>[u]\ar@<5ex>[ddd] \\ &&\\
  &&\\
\{\hbox{$n$-quasi-abelian categories}\}\ar[uuu] \ar[d]\ar@{<->}[rr]& & \{\hbox{$n$-tilting pairs of $t$-structures}\}\ar[d] \ar[uuu]\\
\cE=\cR\cH(\cE,{\cE}x)\cap\cL\cH(\cE,{\cE}x)
\ar@{<->}[rr]& &(\cR\cD_{(\cE,{\cE}x)},\cL\cD_{(\cE,{\cE}x)}) \hbox{ on } \cC=D(\cE,{\cE}x) . \\
}}
\end{equation*}

\subsection{Examples}

\begin{example} \label{Ex4.1}
 Given $R$  a commutative ring, the following categories are $1$-quasi-abelian:
\begin{itemize}
\item
The category of filtered modules over any ring (\cite[Exam.~1.2.13]{Yves}).
\item
The category of torsion-free coherent sheaves over a reduced irreducible
analytic space or algebraic variety X . For X  a normal
curve, the previous category is that of vector bundles (of finite rank) (\cite[Exam.~1.2.13]{Yves}).
\item In the contest of $\mathcal D$-modules the category of strict relative coherent ${\mathcal D}_{X\times S/S}$-modules
with $X\times S$ a complex manifold and $\dim S=1$ (\cite{FMF}).
\item
Let $R$ be a (left and right) coherent ring
with global dimension ${\rm gl.dim }(R)=1$
and $\cE:={\mathrm{add}}(R)$ (see Appendix~\ref{R1.2}).
The maximal Quillen exact structure on $\cE$ coincides
with the minimal one and $\cE$ is a $1$-quasi-abelian category;
its left heart is $\cL\cK(\cE)\simeq\rcoh R$ 
(and so $\cE=\rproj\cE$ is $1$-cotilting torsion-free class with its minimal
Quillen exact structure) while $\cR\cK(\cE)\simeq(\cE\lcoh)^\circ$. 
\end{itemize}

The following list contains more examples of $1$ and $2$-quasi-abelian categories:
\begin{itemize}

\item
Let $R$ be a (left and right) coherent ring
with global dimension ${\rm gl.dim }(R)=2$
and $\cE:={\mathrm{add}}(R)$.
Hence  $\cE$ admits kernels and cokernels:
given a morphism $f:P_1\to P_2$ in $\cE$, its kernel
$\Ker_{\cE}(f)=\Ker_{\rcoh R}(f)\in\cE$ 
while $\Coker_{\cE}(f)=(\Ker_{R\lcoh}(f^\ast))^\ast$
where $(\_)^\ast:=\Hom_R(\_,R)$.
Therefore for any Quillen exact structure $\cE$ is $2$-quasi-abelian.
In \cite{Rump} Rump constructed a tilted algebra $A$, of type $\Bbb E_6$, such that its
category of projective modules of finite type has kernels and cokernels
(since $A$ has global dimension $2$), but it is not $1$-quasi-abelian.
\item
Let us consider the affine
plane $\Bbb A^2_k={\rm Spec}(R)$ with $R=k[x,y]$ and $k$ a field; hence 
$R$ has projective dimension $2$ and it is Noetherian therefore coherent; this assures
that $\cE:={\mathrm{add}}(R)$ has kernels and cokernels.
In this case $\cE$ coincides with the category
of free $R$-modules of finite type (this result was proved
by Seshadri in \cite{Sesh}, while  the general statement, known
as Serre problem, was proved by
Quillen and Suslin 
\cite{Qu}, \cite{Su}). Its left heart as a $2$-quasi-abelian category
endowed with its minimal Quillen exact structure, is the category
 ${\rm {Coh}}({\cO}_{\Bbb A^2_k})$
of coherent sheaves on the affine plane
$\Bbb A^2_k$. 
A sequence
$0\to \cE_1\stackrel{\alpha}\to \cE_2\stackrel{\beta}\to \cE_3\to 0$ is exact in $\cE$ 
for its maximal Quillen exact structure
if and only if  $\cE_3\cong (\Ker_{R}(\beta^\ast))^\ast$ and so the cokernel of $\beta$ 
in ${\rm {Coh}}({\cO}_{\Bbb A^2_k})$ is a torsion sheaf whose support has dimension $0$
(finite union of closed points). On the other side any coherent sheaf supported on a 
finite union of closed points can be represented as a cokernel of such a $\beta$.
Let us denote by $\cT_0$ the  class of torsion sheaves supported on points; this  is a Serre subcategory of
${\rm {Coh}}({\cO}_{\Bbb A^2_k})$ and the functor
$I_{\cL}:\cE\to {{\rm {Coh}}({\cO}_{\Bbb A^2_k})/\cT_0}$ is fully faithful  
and $\cE$ is a $1$-cotilting torsion-free class in ${{\rm {Coh}}({\cO}_{\Bbb A^2_k})/ \cT_0}$ and so 
$\cE$ is $1$-quasi-abelian category
(an hence
 the left heart of $\cE$ as a $1$-quasi-abelian category
is the quotient abelian category ${\rm {Coh}}({\cO}_{\Bbb A^2_k})/\cT_{0}$).
\end{itemize}
\end{example}
%

\begin{example}
Let $\cE$ be the category of free abelian groups of finite type. It is
a $1$-quasi-abelian category and
its maximal Quillen exact structure coincides with the minimal one
(split short exact sequences).  
Its left heart 
$\cL\cK(\cE)$ is the whole category of finitely generated abelian groups 
while $\cR\cK(\cE)=(\cE\lcoh)^\circ$ is equivalent to the opposite category
of  the category of abelian groups of finite type.
The derived equivalence $D({\cA}b)\simeq D({\cA}b^\circ)$
is given by $\R\Hom_{\Bbb Z}(\_, \Bbb Z)$ and  the intersection
of the hearts is given by the finitely generated abelian groups $F$ such that 
$\R\Hom_{\Bbb Z}(F, \Bbb Z)=\Hom_{\Bbb Z}(F, \Bbb Z)$ which are the free abelian groups of finite type.
One can also interpret the right heart as the   tilt
of the abelian category of finitely generated abelian groups 
with respect to the cotilting torsion-free class of  free abelian groups of finite type:
i.e., objects are complexes $d:F_0\to F_1$ (in degree $0$ e $1$) of free
abelian groups such that $\Coker(d)$ is a torsion group.
\end{example}
%

\begin{example}
\cite[Exam.~ 3.6.(5), Exer.~3.7.(12)]{Bayer}.
Let $X$ be a smooth projective curve, $\mu\in \Bbb R$ a real number
and let $A_{\geq \mu}$ be the full subcategory of ${\cC}{\mathrm {oh}}(\cO_X)$ 
generated by torsion sheaves and vector bundles whose HN-filtration
quotients have slope $\geq \mu$. Hence $A_{\geq \mu}$ is a tilting torsion
class in ${\cC}{\mathrm {oh}}(\cO_X)$.
In particular let $X={\Bbb P}_{k}^1$
 the projective line  over a field $k$.
 Let us recall that any coherent sheaf $\cF\in {\cC}{\mathrm {oh}}(\cO_{{\Bbb P}_k^1})$
 decomposes as
$\cF\cong \cF_{\mathrm{tor}}\oplus \cF_{\mathrm{free}}$
and, by the
Birkhoff-Grothendieck theorem, the torsion-free part is a direct sum of line bundles 
$\cO_{{\Bbb P}_k^1}(d_i)$.
So $\cE:=A_{\geq 0}$ is a a $1$-tilting torsion
class in ${\cC}{\mathrm {oh}}(\cO_{{\Bbb P}_k^1})$
(hence it is a $1$-quasi-abelian category).
In this case the maximal Quillen exact structure on $\cE$ does not coincide with the minimal one
since the sequence 
$0\to \cO_{{\Bbb P}_k^1} \to \cO_{{\Bbb P}_k^1}(1)^2 \to \cO_{{\Bbb P}_k^1}(2)\to 0$ does not split
(i.e., $\Ext^1_{\cO_{{\Bbb P}_k^1}}(\cO_{{\Bbb P}_k^1}(2),\cO_{{\Bbb P}_k^1}) \neq 0$).
So we have a right heart (as a $2$-quasi-abelian category with
$\cE$
endowed with the split exact structure) in $K(\cE)$
which is the category $(\cE\lcoh)^\circ$
while its right heart in $D(\cE)$ as $1$-quasi-abelian category is the category
of coherent sheaves ${\cC}{\mathrm {oh}}(\cO_{{\Bbb P}_k^1})$
(since $\cE$ is a $1$-tilting torsion class in it).
Concerning the left heart $\cL\cD(\cE)$ its objects are complexes
$X=[\cE^{-1}\stackrel{d}\to\cE^0]$ with $\cE^i\in\cE$ and $d$ a monomorphism in
$\cE$. Since any object in $\cE$ admits a finite resolution
whose terms are direct factors of  finite direct sums of 
$\cO_{{\Bbb P}_k^1}\oplus\cO_{{\Bbb P}_k^1}(1)$
(and so in ${\mathrm{add}}(\cO_{{\Bbb P}_k^1}\oplus\cO_{{\Bbb P}_k^1}(1))$
see Appendix~\ref{R1.2})
we can represent $X$ as a bounded complex
$X=[X^{-m}\to \cdots \to X^0]\in 
K^{\leq 0}({\mathrm{add}}(\cO_{{\Bbb P}_k^1}\oplus\cO_{{\Bbb P}_k^1}(1)))$.
Thus  for any $X\in\cL\cD(\cE)$ and for any $i>0$
we have $\Ext^i(\cO_{{\Bbb P}_k^1}\oplus\cO_{{\Bbb P}_k^1}(1),X)\cong
\cD(\cE)(\cO_{{\Bbb P}_k^1}\oplus\cO_{{\Bbb P}_k^1}(1),X[i])=0$ and
 (via the associated distinguished triangle) we get a short exact sequence
in the left heart
$0\to X^{[-m,-1]}[-1]\to X^0\to X\to 0$  which proves
that
$T=\cO_{{\Bbb P}_k^1}\oplus\cO_{{\Bbb P}_k^1}(1)$
is a projective generator of the left heart $\cL\cD(\cE)$.
Hence $\cL\cD(\cE)$ is equivalent
to the category of left modules of finite type on the ring
$R:=\End(\cO_{{\Bbb P}_k^1}\oplus\cO_{{\Bbb P}_k^1}(1))$ which is 
the path algebra of 
 the Kronecker quiver $Q$
\[
\xymatrix{
\point\ar@<0.5ex>[r]\ar@<-0.5ex>[r] & \point \\
}
\]
The derived equivalence 
$D^b({\cC}{\mathrm {oh}}((\cO_{{\Bbb P}_k^1}))
\simeq D^b({\mathrm{Rep}_k(Q)})$
(which holds true also in the unbounded derived categories by Theorem~\ref{Th:1--tilt})
 is due to A. Beilinson
and $T=\cO_{{\Bbb P}_k^1}\oplus\cO_{{\Bbb P}_k^1}(1)$
is an example of \emph{tilting sheaf}.
\end{example}

\begin{example}
Given $\cA$  a Grothendieck category and $T$ a classical $n$-tilting object
in $\cA$ one can associate to $T$ the $t$-structure:
\[\cT^{\leq 0} :=\{X^\point\in D(\cA)\; |\; \Hom_{D(\cA)}(T,X^\point)=0\; \hbox{for all  } i>0 \}
\]
\[\cT^{\leq 0} :=\{X^\point\in D(\cA)\; |\; \Hom_{D(\cA)}(T,X^\point)=0\; \hbox{for all  } i>0 \}.\]

The pair $(\cD,\cT)$ is a $n$-tilting pair of $t$-structures. 
The intersection $\cE$ of their hearts is
the full subcategory of $\cA$ whose objects are $n$-presented by $T$.
It is  a $n$-tilting torsion class in $\cA$
(see \cite[Prop.~6.2]{FMT} for more details).
\end{example}

\section{Perverse coherent sheaves}\label{CY}

This section provides a  generalisation of
Bridgeland categories of perverse coherent sheaves
by the use of $n$-tilting torsion classes.

This problem has been studied in 
\cite{TU} where the authors proposed a category of perverse coherent sheaves
via the used of iterated $1$-tilting classes (see also \cite{FMT} for a general treatment of this iterated Happel Reiten Smal{\o} procedure). 
The construction in \cite{TU} requires the use of a tilting complex, which is proved to exist in the
case of relative dimension $2$ under a technical assumption.
In our approach we will follow Bridgeland paper
and we will define (for $n= 2$) a category of perverse coherent sheaves without the use of a tilting
complex. 

Let $X$ be a Noetherian scheme  over $\Bbb C$, we denote by $\QcX$ (resp. $\chX$) the category of quasi-coherent (resp. coherent) sheaves on $X$
and by $D(\QcX)$ its derived category. We denote by $D(X)$ the derived category of
$\chX$ and we recall that it is equivalent to the derived category of quasi-coherent
sheaves with coherent cohomologies $D_{\mathrm {coh}}(\QcX)$.

\numero{\bf Assumptions.}\label{Ass}
\noindent 
For the rest of this section we will assume that  $f:Y\to X$ is a  projective birational morphism 
 of Noetherian locally $\Bbb Q$-factorial semiseparated 
  schemes over $\Bbb C$ such that $\bR f_*(\cO_Y)=\cO_X$
with relative dimension $n$. 
The condition of being Noetherian locally $\Bbb Q$-factorial semiseparated 
assures that the schemes involved have the resolution property i.e.;
every coherent sheaf is a quotient of some vector bundle.
Moreover 
we get:
\begin{itemize}
\item for any coherent $\cO_Y$-module $\cG$ we
have $\bR f_*(\cG)\in D^{[0,n]}(X)$;
\item $\id_{D(X)}\simeq \bR f_* \bL f^*$, hence the functor $ \bL f^*$ is  fully faithful;
\item $\bR f_* f^!\simeq \id_{D(X)}$, therefore the functor $f^!$ is  fully faithful;

\item $f^!(D^{\geq 0}(X))\subseteq D^{\geq -n}(Y)$
(this is the $n$-version of \cite[Lem. 3.1.4]{VdB} whose proof coincides with that one with $-1$ replaced by $-n$ and
$-2$ replaced by $-n-1$ at the beginning of the proof).
 \end{itemize}
 \smallskip 
 
 In the case of $n=1$ Van den Bergh proved in 
\cite[Lem. 3.1.2, Lem. 3.1.3, Lem. 3.1.5]{VdB}
(following  \cite[Prop. 5.1]{Bri1}) that
the following classes
\begin{align*}
\cT_{0}&= \{T\in \chY \; | \, \bR^1f_* T=0\} & ;\quad  & \cF_{0}= \{F\in \chY  \; | \,  F\stackrel{\phi_F}\hookrightarrow
H^{-1}f^!\bR^1f_*F\} \hfill \\
\cT_{-1}&= \{T\in \chY \; | \, \eta_T: f^*f_*T\twoheadrightarrow T\} &  ;\quad  &  \cF_{-1}= \{F\in \chY  \; | \, f_*F=0\} \hfill \\
\end{align*}
define torsion pairs in $\chY$
(which we will prove to be  tilting in Lemma~\ref{L:Tcog}).
We recall that $\eta_T:f^*f_*T\to T$ is the
co-unit of the adjunction $(f^*,f_*)$ while the map
$\phi_F: F\to
H^{-1}f^!\bR^1f_*F$ is the morphism obtained by taking the zero cohomology of the composition 
$F\to f^! \bR f_*F \to f^! \bR^1f_*F[-1]$ (where the first map is the unit of the adjunction $(\bR f_*,f^!)$).
Notice that $\cT_{-1}=\cT_{0}\cap \cX$
where $\cX:=\{\cF\in\chY\; | \, \Hom(\cF,C)=0\; \forall\; C\in \chY : \bR f_*C=0\}$.
The heart of the $t$-structure obtained by tilting the natural $t$-structure with respect to
the tilting   torsion pair $(\cT_{-1},\cF_{-1})$ 
(resp. $(\cT_{0},\cF_{0})$) is called $\Pvu$ (resp. $\Pvz$).
Hence $D(Y)\simeq D(\Pvu)\simeq D(\Pvz)$.

\numero {\bf Higher analog of $\cT_{-1}$ and $\cT_0$}.
Let $\cG$ be a coherent $\cO_Y$-module.
In the case of relative dimension $n>1$ we propose the following generalization of the 
previous tilting torsion classes:
\[
\cT_{0}= \{T\in \chY \; | \, \bR f_*T\cong f_*T\}\qquad
\cT_{-1}=  \{T\in \cT_{0} \; | \; 
\eta_T: f^*f_*T\twoheadrightarrow T\} .
\]

\begin{conjecture}\label{MC}
We conjecture that under the previous assumptions the
classes $\cT_{0}$ and $\cT_{-1}$ are  $n$-tilting in $\chY$.
\end{conjecture}
We will prove that for any $n$ these classes satisfy conditions (1), (2) and (4) of
Definition~\ref{D:ntiltorcl}.
For $n=1$ they  are tilting torsion classes by Lemma~\ref{L:Tcog}. We will prove in Theorem~\ref{T:dtiltingt}
that for $n=2$ they are  $2$-tilting in $\chY$.

Let us  prove that under the assumptions of \ref{Ass}
the class
 $\cT_{-1}$ cogenerates $\chY$; 
this statement is the relative version of McMurray Price's Lemma  \cite[Lem.~5.2]{Price} which we prove with the same argument in the following Lemma.

\begin{lemma}\label{L:Tcog}
Let $f:Y\to X$ be a projective morphism as in \ref{Ass} and let $\cL$ be an $f$-ample vector bundle.
For any $\cF\in \chY$ there exists 
a  monomorphism $\alpha: \cF \hookrightarrow T$ with $T\in \cT_{-1}$.
\end{lemma}
\begin{proof}
The relative Serre vanishing Theorem (\cite[Ch. III.5]{Har}), 
guarantees that given $\cF\in \chY$ for  $m\gg 0$ we have:
$\bR^i f_*(\cF\otimes_{\cO_Y} \cL^m)=0$ for any $i>0$ and  
the counit  $f^*f_*(\cF\otimes_{\cO_Y} \cL^m)\twoheadrightarrow \cF\otimes_{\cO_Y} \cL^m$ 
of the adjunction $(f^\ast, f_\ast)$ is an epimorphism;
which is equivalent to require that $\cF\otimes_{\cO_Y} \cL^m\in \cT_{-1}$.
Let $\cF\in \chY$ and let consider $m$ big enough such that  both $\cL^m$ and $\cF\otimes_{\cO_Y} \cL^m$ belong to
$ \cT_{-1}$. Let $\cE\twoheadrightarrow f_*(\cL^m)$ be an epimorphism in $\chX$ with $\cE$ a locally free
$\cO_X$-module of finite rank (it exists since $X$ has the resolution property).
Hence the composition 
\[\eta: f^*(\cE)\twoheadrightarrow f^*f_*(\cL^m)\twoheadrightarrow \cL^m\] 
is  a locally splitting epimorphism
since $ \cL^m$ is a locally free sheaf, hence its dual
$\eta^\vee: \cL^{-m}\to\cHom_{\cO_Y}(f^*(\cE),\cO_Y)$ is a locally splitting monomorphism
and so it is  pure (i.e., universally injective)
which implies that the morphism $\delta:=\cF\otimes_{\cO_Y}\cL^m\otimes_{\cO_Y}\eta^\vee$ is injective
\[\xymatrix{
\delta:\cF\ar@{^(->}[r] & 
\cF\otimes_{\cO_Y}\cL^m\otimes_{\cO_Y}\cHom_{\cO_Y}(f^*(\cE),\cO_Y)\cong \cHom_{\cO_Y}(f^*(\cE),\cF \otimes_{\cO_Y}\cL^m). \\
}
\]
Moreover
\[
\begin{matrix}
\hfill \bR f_\ast \cHom_{\cO_Y}(f^*(\cE),\cF \otimes_{\cO_Y}\cL^m)
\cong &\bR f_\ast \bR\cHom_{\cO_Y}(\bL f^*(\cE),\cF \otimes_{\cO_Y}\cL^m)\cong \hfill \\
\hfill \cong\bR\cHom_{\cO_X}(\cE,\bR f_\ast(\cF \otimes_{\cO_Y}\cL^m))\cong & 
\cHom_{\cO_X}(\cE, f_\ast(\cF \otimes_{\cO_Y}\cL^m))\cong \hfill \\
\hfill \cong & f_\ast \cHom_{\cO_Y}(f^*(\cE),\cF \otimes_{\cO_Y}\cL^m).\hfill \\
\end{matrix}
\]
The first isomorphism holds true since both $\cE$ and $f^\ast(\cE)$ are locally free coherent sheaves, hence
$f^\ast(\cE)$ is $\cHom_{\cO_Y}(\_, \cF \otimes_{\cO_Y}\cL^m)$-acyclic, while $\cE$ is $f^\ast$-acyclic. The second isomorphism is induced by the adjunction
$(\bL f^*,\bR f_*)$. Since $\cE$ is locally free $\bR \cHom_{\cO_X}(\cE, f_\ast(\cF \otimes_{\cO_Y}\cL^m))\cong
\cHom_{\cO_X}(\cE, f_\ast(\cF \otimes_{\cO_Y}\cL^m))$, hence the third isomorphism is deduced by the fact that we choose $m$ such that 
$\bR f_\ast(\cF \otimes_{\cO_Y}\cL^m)\cong f_\ast(\cF \otimes_{\cO_Y}\cL^m)$. The last isomorphism is induced by the adjunction
$(f^*,f_*)$.
It remains to prove that the counit of the adjunction
$f^*f_* \cHom_{\cO_Y}(f^*(\cE),\cF \otimes_{\cO_Y}\cL^m)\to \cHom_{\cO_Y}(f^*(\cE),\cF \otimes_{\cO_Y}\cL^m)$
is an epimorphism. By 
 the last isomorphism of the previous list we have
\[
\begin{matrix}
f^*f_* \cHom_{\cO_Y}(f^*(\cE),\cF \otimes_{\cO_Y}\cL^m)\cong &
f^*\cHom_{\cO_X}(\cE, f_\ast(\cL^m\otimes_{\cO_Y}\cF))\cong  \\
\cHom_{\cO_Y}(f^*(\cE),f^*f_*(\cL^m\otimes_{\cO_Y}\cF))
\end{matrix}
\]
\noindent and
by hypothesis
the counit  $f^*f_*(\cF\otimes_{\cO_Y} \cL^m)\twoheadrightarrow \cF\otimes_{\cO_Y} \cL^m$ 
 is an epimorphism which implies that 
 $\cHom_{\cO_Y}(f^*(\cE),f^*f_*(\cL^m\otimes_{\cO_Y}\cF)) \twoheadrightarrow
 \cHom_{\cO_Y}(f^*(\cE),\cL^m\otimes_{\cO_Y}\cF)$ (because $f^*(\cE)$ is locally free).
\end{proof}

%
%

\begin{lemma}\label{Lemma:cue}
The full subcategories 
$\cT_{i}$, with $i\in \{0, -1\}$,  are closed under extensions in  $\chY$. 
\end{lemma}
\begin{proof}
Let us prove  that $\cT_0$ is closed under  extensions in  $\chY$. 
Given any short exact sequence
\begin{equation}\label{Eq:7.1}
0\to T_1\to \cF \to T_2 \to 0\qquad \hbox{with } T_1, T_2\in \cT_0; \hbox{ and } \cF\in\chY
\end{equation}
 we get a distinguished triangle $\bR f_*T_1\to \bR f_*\cF\to \bR f_*T_2 \stackrel{+}\to$
with $\bR f_*T_1, \bR f_*T_2$ coherent $\cO_X$-modules (thus complexes concentrated in degree $0$)
which proves that $\bR f_*\cF\cong f_*\cF$ is a complex concentrated in degree $0$.

Let us prove  that $\cT_{-1}$ is closed under  extensions in  $\chY$. Let us start with
the short exact sequence \eqref{Eq:7.1} by supposing that $T_1, T_2\in \cT_{-1}$.
Since $\cT_{-1}\subseteq \cT_{0}$, we deduce by the previous argument that $\cF\in\cT_0$. Thus
the sequence $0\to f_*T_1\to f_*\cF\to f_*T_2\to 0$ is exact.
Hence the following diagram commutes
\begin{equation}\label{Eq:cdi}
\xymatrix{
&  f^*f_*(T_1)\ar[r]\ar@{->>}[d] & f^*f_*(\cF)\ar[r]\ar[d] &  f^* f_*(T_2)\ar[r]\ar@{->>}[d] & 0 \\ 
0\ar[r] & T_1\ar[r] &\cF \ar[r]& T_2\ar[r] &0 \\ 
}
\end{equation}
therefore the canonical map $f^*f_*\cF\twoheadrightarrow \cF$ is an epimorphism.
\end{proof}

\begin{lemma}\label{Lemma:ncoker}
(Under the assumptions \ref{Ass}), 
the full subcategories $\cT_{i}$, $i\in \{0, -1\}$, satisfy condition (4) of Definition~\ref{D:ntiltorcl}, namely:

for any exact sequence in $\chY$
\begin{equation}\label{Eq:lesx}
\xymatrix{
0\ar[r] &A\ar[r]& X_{1}\ar[r]^{d_X^{1}} & \cdots \ar[r]^{d_X^{n-1}} & 
X_n\ar[r] & B\ar[r] & 0\\ }
\end{equation}with $X_{j}\in\;\cT_{i}$ for any $1\leq j\leq n$ \and $A, B\in \chY$ 
we have $B\in\; \cT_{i}$.
\end{lemma}
\begin{proof}
Consider $X^\point:= [\cdots\to 0 \to X_1 \to\cdots \to \stackrel{\point}X_n\to 0\to \cdots]$
where $ X_n$ is placed in degree $0$. 
The sequence \eqref {Eq:lesx}
produces the distinguished triangle $A[n-1]\to X^\point \to B[0]\stackrel{+}\to\; $
which
induces the distinguished triangle 
$\bR f_*(A)[n-1]\to \bR f_*(X^\point)\to \bR f_*(B)\stackrel{+}\to .$
Since $f$ has relative dimension $n$, $\bR f_*(A)[n-1]\in D^{\leq 1}(X)$.
Hence $\bR f_*(B)\in D^{\leq 0}(X)$ (since  $\bR f_*(X^\point)\in  D^{\leq 0}(X)$), therefore $B$ belongs to $\cT_0$.

If $X_j\in \cT_{-1}$ for any $1\leq j\leq n$, by the
previous argument we deduce that $B$ belongs to $\cT_{0}$ and, since 
it is a quotient of $X_n$, $f^*f_*(B)\twoheadrightarrow B$.
\end{proof}

\begin{theorem}\label{T:dtiltingt}
For $n=2$ the  classes
\[
\cT_{0}=\{T\in \chY \; | \, f_*T=\bR f_*T\}
\qquad
\cT_{-1}=\{T\in \cT_{0} \; | \, f^*f_*T\twoheadrightarrow T\}
\]
are $2$-tilting torsion classes in $\chY$.
\end{theorem}
\begin{proof}
Points (1), (2) and (4) of Definition~\ref{D:2tiltorcl}
have been proved in Lemma~\ref{L:Tcog}, Lemma~\ref{Lemma:cue} and
Lemma~\ref{Lemma:ncoker} 
respectively.
We have to prove that $\cT_{-1}$ and $\cT_{0}$ have kernels.

The full subcategory  ${\cX_0}=\{T\in \QcY \; | \bR^2 f_*T=0\}$ is a $1$-tilting torsion class in $\QcY$ i.e.;
it is closed under direct sums, extension, quotients and it cogenerates $\QcY$ (since it contains any injective sheaf)

Given $\cF\in\QcY$ we will denote by $t_{\cX_0}(\cF)$ its torsion part (i.e.; the biggest subsheaf of $\cF$ lying in ${\cX_0}$). Notice that if
$\cF\in\chY$ even  $t_{\cX_0}(\cF)\in\chY$.

{\it Step 1.} Let us prove that $\cT_{-1}$ admits kernels.

Give any 
locally free sheaf $\cE\in\chX$, the sheaf $f^*\cE\in\cT_{-1}$ 
since, by \ref{Ass}, we have $\cE=\bR f_* \bL f^*\cE\cong \bR f_*f^*\cE\cong f_*f^*\cE$.
Hence, given any
coherent sheaf $\cM\in \chX$, the sheaf
$f^*\cM$ belongs to $\cT_{-1}$ (since it is the cokernel of a map  in $\cT_{-1}$).

Let $\cE_1\stackrel{\alpha}\to \cE_2$ be a morphism in $\cT_{-1}$ whose kernel in  $\chY$ is $\cK:=\Ker\alpha$.
Let us denote by  $\eta_{\cK}:f^*f_*\cK\to \cK$  the
counit of the adjunction $(f^*,f_*)$.
The short exact sequence
$0\to t_{\cX_0}(\Ker\eta_\cK)\stackrel{j}\to f^*f_*\cK\to {\overline{\cK}}\to 0$,
($\overline{\cK}:=\Coker j$), induces the distinguished triangle
$\bR f_*(t_{\cX_0}(\Ker\eta_\cK))\to f_*f^*f_*\cK\to \bR f_* ({\overline{\cK}})\stackrel{+}\to
$
which proves that $\bR f_* ({\overline{\cK}})\in D^{\leq 0}(X)$, therefore
${\overline{\cK}}\in \cT_{-1}$.
Let us 
verify that $\overline{\cK}=\ker_{\cT_{-1}}(\alpha)$.
Let $\cL\stackrel{\phi}\to \cE_1$ be a morphism in $\cT_{-1}$ such that $\alpha\phi=0$ and  consider the following functorial
commutative diagram obtained by the universal property of the kernel and by the adjunction
$(f^*,f_*)$:
\[\xymatrixrowsep{1 em}
\xymatrix{
&&\Ker\eta_{\cL}\ar@{^(->}[r]\ar[ldd]_{\gamma}\ar[lld]& f^*f_*\cL\ar@{->>}[r]^(0.6){\eta_{\cL}}
\ar[ldd]_{f^*f_*\beta}&\cL\ar[rdd]^(0.6){\phi}\ar[rrdd]^{0}\ar[dd]^{\exists !}_{\beta} \ar[lddd]_(0.4){\overline{\beta}}|(0.65)\hole && & \\
 t_{\cX_0}(\Ker\eta_\cK) \ar[rd] \ar[rrd]|(0.6)\hole& & & & & \\
&\Ker\eta_{\cK}\ar@{^(->}[r]& f^*f_*\cK\ar[rr]^(0.53){\eta_{\cK}}\ar@{->>}[rd]^{\pi}&& \cK\ar[r] & 
\cE_1\ar[r] ^{\alpha}&\cE_2 \\ 
& & & \overline{\cK}\ar[ru] & && \\
}
\]
we note that $\ker\eta_{\cL}\in \cX_0$ since $\cL\in\cT_{-1}$, hence $\gamma$ factors through 
$ t_{\cX_0}(\Ker\eta_\cK)$. Therefore there exists a unique $\overline{\beta}:\cL\to \overline{\cK}$ such that the diagram commutes.

{\it Step 2.} Let us prove that $\cT_{0}$ admits kernels.

Let $\cF_1\stackrel{\alpha}\to \cF_2$ be a morphism in $\cT_{0}$ whose kernel in  $\chY$ is $\cK:=\Ker\alpha$.
 Let $\cM[1]$  be the mapping cone of
 $\chi: \cK\to f^!\bR f_*\cK\to f^!\delta^{\geq 1}\bR f_* \cK$.
By \ref{Ass} we have $f^!\delta^{\geq 1}\bR f_* \cK\in \cD^{\geq -1}(Y)$, hence
 $\cM\in \cD^{\geq 0}(Y)$.  
 
 Let us prove that ${\overline\cK}:=t_{\cX_0}(H^{0}(\cM))$ belongs to $\cT_0$ and $\overline{\cK}=\ker_{\cT_{0}}(\alpha)$.
 Let consider the following commutative diagram with distinguished rows and columns:
 \[ \xymatrixrowsep{0.8em}
\xymatrix{
H^{0}(\cM)\ar[r] \ar[d] &\cM\ar[r]\ar[d] & 
\delta^{\geq 1}(\cM)\ar[r]^(0.7){+}\ar[d] & \\
\cK\ar[r]\ar[d] & \cK\ar[r]\ar[d]^{\chi} & 
0\ar[r]^(0.7){+}\ar[d] & \\
\cN\ar[r]\ar[d]^{+}&f^!\delta^{\geq 1}\bR f_* \cK\ar[r] \ar[d]^{+}&
\delta^{\geq 1}(\cM)[1]\ar[r]^(0.7){+} \ar[d]^{+}& \\
\; & \; & \; & \\
}
\] 
 By applying to it the functor $\bR f_*$ (using $\bR f_*f^!=\id_{\cD(X)}$) we deduce the following facts:
$\bR f_*\cN\in \cD^{\geq 1}(X)$, $f_*H^{0}(\cM)\cong f_*\cK$, the map
$\bR^1f_*\cK\to\bR^1f_*\cN\to \bR^1f_*\cK$ 
 (induced  by the sud-ovest square) is the identity, hence $\bR^1f_*(H^{0}(\cM))=0$.
Thus ${\overline{\cK}}\in\cT_0$
(since $f_* \left({H^{0}(\cM)\over {\overline{\cK}}}\right)=0$).
Any $\cL\stackrel{\phi}\to \cF_1$ in $\cT_{0}$ such that $\alpha\phi=0$
factors  uniquely through  
$\cL\stackrel{\phi'}\to\cK$.
In the exact sequence
{\footnotesize{$$
\xymatrixcolsep{1em}
\xymatrix{
\Hom_{D^b(Y)}^{-1}(\cL,f^!\delta^{\geq 1}\bR f_* \cK)
\ar[r]&\Hom_{D^b(Y)}(\cL,\cM)\ar[r]& 
\Hom_{D^b(Y)}(\cL,\cK)\ar[r]& \Hom_{D^b(Y)}(\cL,f^!\delta^{\geq 1}\bR f_* \cK)\\
}$$}}
the first and the last terms are zero since 
$$\Hom^i_{D^b(Y)}(\cL,f^!\delta^{\geq 1}\bR f_* \cK)\cong
\Hom^i_{D^b(X)}(f_*\cL,\delta^{\geq 1}\bR f_* \cK)=0\quad \forall i\in\{-1, 0\}.$$
This proves that $\Hom_Y(\cL,H^{0}(\cM))\cong\Hom_{D^b(Y)}(\cL,\cM)\cong \Hom_{Y}(\cL,\cK)$
(remember that $\cM\in D^{\geq 0}(Y)$). Thus
 we obtain that $\phi'$
 factors  uniquely through  
$\cL\stackrel{\phi''}\to H^{0}(\cM)$.
Therefore  the morphism $\phi''$  factors  uniquely through  
 $\cL\stackrel{\overline{\phi''}}\to \overline{\cK}$ (since $\cL\in\cX_0$).
 \end{proof}

 \begin{definition}
By Theorem~\ref{T:3.5} (for $n=2$ or supposing that  Conjecture~\ref{MC} holds true $n>2$), 
we define $({}^{i}\cD^{\leq 0}, {}^{i}\cD^{\geq 0})$ (with $i\in \{0, -1\}$) to be the 
$t$-structures obtained by tilting $\cD$ with respect to the $n$-tilting 
 torsion classes $\cT_{i}$.
 Their hearts are denoted by
 ${\;}^{i}{\rm Per}(Y/X)$ for $i\in \{-1,0\}$
 and objects in ${\;}^{i}{\rm Per}(Y/X)$ are called {\emph{perverse coherent sheaves}}.
 \end{definition}

 \begin{theorem}(Theorem~\ref{Th:ntilt}).
 For $n=2$ or assuming Conjecture~\ref{MC} 
 $$
 D(Y)\simeq D({\;}^{0}{\rm Per}(Y/X))\simeq D({\;}^{-1}{\rm Per}(Y/X)).
 $$
 \end{theorem}
 \begin{remark} 
 In higher dimension,
Toda remarked in \cite{T} that
Bridgeland proof, of the derived equivalence between $\cD^b(Y)$
 and $\cD^b(Y^+)$ via the intersection theorem, produces also the smoothness of the flop. Nevertheless 
 there are examples of $4$ dimensional flops which do not preserve the smoothness.
 We think that the use of the previous $n$-tilted torsion classes (which produce
equivalences $\cD(Y)\simeq \cD({\;}^{i}{\rm Per}(Y/X))$)
 could permit to
attack the problem of the equivalence $\cD({\;}^{-1}{\rm Per}(Y/X))\simeq \cD({\;}^{0}{\rm Per}(Y^+/X) )$ as in \cite{VdB}.
\end{remark}
%


%
%
%
%

\section{Comparison between $n$-abelian  and
$n+1$-quasi-abelian categories}\label{AvsQA}

Recently Jasso in \cite{Jasso} introduced the notion
of \emph{$n$-abelian category} whose 
basic example is an $n$-cluster-tilting subcategory of an abelian category. 
Let us briefly recall this definition and  the principal results of \cite{Jasso}.

Given $\cC$ an additive category and $d^0:X^0\to X^1$ a morphism in $\cC$
an \emph{$n$-cokernel of $d^0$} 
(\cite[Def.~2.2]{Jasso})
is a sequence
\[\xymatrix{
(d^1,\dots ,d^n):X^1\ar[r]^(0.7){d^1}& X^2\ar[r]^{d^2}&
\cdots \ar[r]^{d^n}& X^{n+1}}
\]
such that for all $Y\in\cC$ the sequence of abelian groups
\begin{equation}\label{EJ1}
\xymatrix{
0\ar[r]& \cC(X^{n+1},Y)\ar[r]^{d^n\circ \_}& \cC(X^n,Y)
\ar[r]^{d^{n-1}\circ \_}&\cdots\ar[r] &\cC(X^1,Y)\ar[r]^{d^{0}\circ \_}&
\cC(X^0,Y)}\end{equation}
is exact.
In terms of coherent functors in $\cE\lcoh$ the previous sequence \eqref{EJ1} 
proves that
(following the notation of Appendix~\ref{N1.0})
the kernel of the morphism ${\;}_{X^{1}}\cC\stackrel{d^{0}\circ \_}\longrightarrow
{\;}_{X^{0}}\cC$ is a coherent functor which admits a projective presentation 
\[\xymatrix{
0\ar[r] &{\;}_{X^{n+1}}\cC\ar[r]^{d^n\circ \_}& {\;}_{X^{n}}\cC\ar[r]^{d^{n-1}\circ \_}&\cdots \ar[r]& \Ker({\;}_{X^{1}}\cC\ar[r]^{d^{0}\circ \_}&{\;}_{X^{0}}\cC)\ar[r]& 0}\] of length $n$ in $\cE\lcoh$.
The dual concept of $n$-kernel implies that the  
 kernel of the morphism $\cC_{X^0}\stackrel{\_\circ d^{0}}\longrightarrow
\cC_{X^1}$ is a coherent functor admitting a projective presentation of length $n$
in $\rcoh\cE$.

An \emph{$n$-exact sequence in $\cC$} (\cite[Def.~2.4]{Jasso}) is a complex
$X^0\stackrel{d^0}\longrightarrow X^1 \stackrel{d^1}\longrightarrow\cdots \stackrel{d^{n-1}}\longrightarrow X^n\stackrel{d^n}\longrightarrow X^{n+1}$
such that $(d^0,\dots,d^{n-1})$ is a $n$-kernel of $d^n$ and $(d^1,\dots,d^n)$ is an
$n$-cokernel of $d^0$.

\begin{definition}(\cite[Def.~3.1]{Jasso}).
Let $n$ be a positive integer. An \emph{$n$-abelian category} is an additive category
${\mathcal M}$ satisfying the following axioms:
\begin{description}
\item[$(A0)$] the category ${\mathcal M}$ is projectively complete;
\item[$(A1)$] every morphism in ${\mathcal M}$ has an $n$-kernel and
an $n$-cokernel;
\item[$(A2)$] for every monomorphism $f^0:X^0\to X^1$ in ${\mathcal M}$ and
for every $n$-cokernel $(f^1,\dots,f^n)$ of $f^0$ the following sequence is
$n$-exact:
\[
\xymatrix{
X^0\ar[r]^{f^0} & X^1\ar[r]^{f^1} &\cdots \ar[r]^{f^{n-1}} & X^n\ar[r]^{f^n} &
X^{n+1}
}
\]
\item[$(A2^{op})$] 
for every epimorphism $g^n:X^n\to X^{n+1}$ in ${\mathcal M}$ and
for every $n$-kernel $(g^0,\dots,g^{n-1})$ of $g^n$ the following sequence is
$n$-exact:
\[
\xymatrix{
X^0\ar[r]^{g^0} & X^1\ar[r]^{g^1} &\cdots \ar[r]^{g^{n-1}} & X^n\ar[r]^{g^n} &
X^{n+1}
}
\]
\end{description}
\end{definition}

\begin{proposition}
Any $n$-abelian category ${\mathcal M}$ is an $n+1$-coherent category, hence  ${\mathcal M}$ is an
$n+1$-quasi-abelian category for any Quillen exact structure on  ${\mathcal M}$.
\end{proposition}
\begin{proof}
Axioms $(A0)$ and $(A1)$ prove that the category ${\mathcal M}$ is coherent 
(see Definition~{D1.3})
since
any kernel of a morphism between representable functors is finitely presented.
Thus $\rcoh\cE$ and $\cE\lcoh$ are abelian categories.
Moreover any coherent functor $F\in \rcoh\cE$ admits a presentation
$\xymatrix{\cC_{X^n}\ar[r] & \cC_{X^{n+1}}\ar@{->>}[r] & F\ar[r] & 0}$, hence by axiom $(A1)$ it admits a projective resolution of length ad most $n+1$
which proves that ${\mathcal M}$ is $n+1$-coherent 
(Definition~\ref{D:n-cohcat}). Therefore, by Definition~\ref{D:n-qa}, ${\mathcal M}$
endowed with its minimal Quillen exact structure
is an $n+1$-quasi-abelian category.
\end{proof}

There are $n+1$-coherent categories which are not $n$-abelian.
For example $1$-abelian categories are precisely abelian categories
while $2$-quasi-abelian categories are projective complete categories admitting
kernels and cokernels. For example $1$-quasi-abelian categories 
which are not abelian categories
are never $n$-abelian ones.

\appendix

\section{Maximal Quillen exact structure}\label{QES}

Let us recall the notion of Quillen exact structure on an
additive category $\cE$ and some results on the maximal 
Quillen exact structure on $\cE$. See \cite{KelQES} and \cite{Buh}.

\numero \label{N3.1}
An \emph{exact category} is the data $(\cE,{\cE}x)$ of an additive category $\cE$ and a class
${\cE}x$ of exact sequences 
$\xymatrix{A\ar@{>->}[r]^{i} & B \ar@{->>}[r]^p & C}$ 
called 
\emph{conflations}
($i$, called 
\emph{inflation} or \emph{admissible monomorphism}, is a kernel of $p$ while
$p$, called a \emph{deflation} or  \emph{admissible epimorphism}, is a cokernel of $i$) satisfying the following axioms:
\begin{description}
\item[$\rm{Ex}0$] The identity morphism of the zero object is a deflation.
\item[$\rm{Ex}1$] The composition of two deflations is a deflation.
\item[$\rm{Ex}1^\circ$] The composition of two inflations is an inflation.
\item[$\rm{Ex}2$] The push-out of an inflation along an arbitrary morphism exists and yields an inflation.
\item[$\rm{Ex}2^\circ$]  The pull-back of a deflation along an arbitrary morphism exists and yields
a deflation. 
\end{description}
We call ${\cE}x$ an \emph{exact structure} on $\cE$.

In general, an additive category $\cE$ can admit different exact structures. 
In particular 
any split short exact sequence is a conflation for any exact structure on $\cE$, moreover any additive category $\cE$ admits a 
\emph{minimal exact structure} whose conflations are split short exact sequences.

Recently many advances have been done also for the dual problem: does $\cE$ admit a \emph{maximal exact structure}?
Due to the definition of Quillen exact structure, the natural candidate for being the class of conflations for the
maximal exact structure on $\cE$ is the 
 class of all kernel-cokernel pairs stable by push-outs and pull-backs.
 In \cite{Rump2} Rump proved that any additive category admits a maximal Quillen exact structure
and, in \cite{SW}, Sieg and Wegner proved that for additive categories with
kernels and cokernels (or equivalently $2$-quasi-abelian categories)
this maximal exact structure coincides with the class of all stable kernel-cokernel pairs.
Crivei generalized the result of Sieg and Wegner as follows:

\begin{proposition}\label{P:Crivei}
\cite[Th.~3.5]{Crivei}
Let $\cE$ be a \emph{weakly idempotent complete} additive category (i.e.,  additive category in which every section has a cokernel, or equivalently, every retraction has a kernel).
The class of all kernel-cokernel pairs stable by push-outs and pull-backs is  the maximal  Quillen
exact structure on $\cE$.
\end{proposition}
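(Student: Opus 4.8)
The plan is to realize the given class as a Quillen exact structure and then read off maximality formally. Write $\cS$ for the class of all kernel--cokernel pairs $A\hookrightarrow B\twoheadrightarrow C$ that remain kernel--cokernel pairs after every finite sequence of the two operations ``push out the inflation along an arbitrary morphism'' and ``pull back the deflation along an arbitrary morphism''; by construction $\cS$ is closed under both operations. I would first record the reduction to maximality: if $(i,p)$ is a conflation of an \emph{arbitrary} Quillen exact structure ${\cE}x$ on $\cE$, then $i=\Ker p$, $p=\Coker i$, and by $\mathrm{Ex}2$ and $\mathrm{Ex}2^{\circ}$ all its pushouts and pullbacks are again conflations of ${\cE}x$, hence kernel--cokernel pairs; thus ${\cE}x\subseteq\cS$. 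Consequently, once $\cS$ is shown to be a Quillen exact structure it contains every exact structure and is itself one, hence is the maximal one (in particular reproving Rump's existence theorem \cite{Rump2}). It therefore suffices to verify the five axioms for $\cS$.

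Four of the axioms are essentially formal with this formulation. Axiom $\mathrm{Ex}0$ holds because the zero pair lies in $\cS$, and $\mathrm{Ex}2$, $\mathrm{Ex}2^{\circ}$ hold because $\cS$ was defined to be stable under exactly the pushout and pullback operations they require---the resulting pairs are again stable since the defining family of operations is closed under composition. The entire content of the theorem is thus concentrated in $\mathrm{Ex}1$ and $\mathrm{Ex}1^{\circ}$, the closure of deflations (resp.\ inflations) under composition, and it is precisely here that weak idempotent completeness enters.

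For $\mathrm{Ex}1$ I would start from two composable deflations $f\colon B\twoheadrightarrow C$ and $g\colon C\twoheadrightarrow D$ of $\cS$, with kernels $i\colon\Ker f\hookrightarrow B$ and $j\colon\Ker g\hookrightarrow C$, and construct a kernel of $gf$ as the pullback $L:=B\times_C\Ker g$ of the inflation $j$ along $f$ (this exists by $\mathrm{Ex}2^{\circ}$, since $f$ is a deflation); here the second projection $w\colon L\twoheadrightarrow\Ker g$ is a deflation with kernel $\Ker f$. A direct universal-property computation shows that the first projection $\iota\colon L\to B$ is the kernel of $gf$, and, using that $w$ is an epimorphism together with $f=\Coker i$ and $g=\Coker j$, that $gf=\Coker\iota$; hence $(\iota,gf)$ is a kernel--cokernel pair. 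The only remaining---and genuinely hard---point is that this pair is \emph{stable}, equivalently that $\iota$ is an admissible inflation of $\cS$.

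This last step is the main obstacle: the naive argument (writing $\Coker\iota$ as the composite deflation $B\xrightarrow{f}C\xrightarrow{\mathrm{can}}\Coker j$) is circular, since it presupposes $\mathrm{Ex}1$. To break the circularity I would form the pushout $P:=B\sqcup_{\Ker f}L$ of $i$ along the inclusion $\Ker f\hookrightarrow L$; by $\mathrm{Ex}2$ the leg $L\to P$ is an inflation, and reading $P$ as the pushout of $\Ker f\hookrightarrow L$ along $i$ identifies $B\to P$ as an inflation with cokernel $\Ker g$. The pair $(\mathrm{id}_B,\iota)$ induces a retraction of this inflation, so it splits and $P\cong B\oplus\Ker g$, whence $L\to P$ takes the form $(\iota,w)\colon L\hookrightarrow B\oplus\Ker g$. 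It then remains to descend from this inflation into the direct sum to the assertion that its first component $\iota$ is an inflation, i.e.\ the key lemma: \emph{if $(\iota,w)\colon L\hookrightarrow B\oplus K$ is an inflation and $w\colon L\to K$ is a deflation, then $\iota$ is an inflation}. This is exactly the step that fails in general and requires weak idempotent completeness: its ``obscure axiom'' consequences (every section has a cokernel, every retraction a kernel) supply the missing kernels that let one split off the $\Ker g$-summand and conclude that $\iota$ is admissible with cokernel $gf$, and that $(\iota,gf)$ is stable. This establishes $\mathrm{Ex}1$; $\mathrm{Ex}1^{\circ}$ follows by the dual construction in the (again weakly idempotent complete) category $\cE^{\circ}$, completing the proof that $\cS$ is the maximal Quillen exact structure.
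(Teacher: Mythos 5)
First, a point of reference: the paper itself contains \emph{no} proof of this proposition --- it is quoted from Crivei \cite{Crivei} (which in turn refines Sieg--Wegner \cite{SW} and Rump \cite{Rump2}) --- so your attempt can only be measured against that cited proof. Against that benchmark, the first two thirds of your proposal are sound: the reduction of maximality to exactness of $\cS$, the (tautological, for your iterated definition of stability) verification of $\mathrm{Ex}0$, $\mathrm{Ex}2$, $\mathrm{Ex}2^{\circ}$, the proof that $(\iota,gf)$ is a kernel--cokernel pair, and the pushout computation identifying $L\to P\cong B\oplus\Ker g$ with $(\iota,w)$ are all correct. The genuine gap is that your ``key lemma'' --- if $(\iota,w):L\to B\oplus K$ is an $\cS$-inflation and $w$ an $\cS$-deflation then $\iota$ is an $\cS$-inflation --- is precisely the hard content of the whole theorem, and you do not prove it: the closing claim that weak idempotent completeness ``supplies the missing kernels that let one split off the $\Ker g$-summand'' is an assertion, not an argument.

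Here is why this is a real gap and not a routine verification. (a) Membership of $(\iota,gf)$ in $\cS$ requires, for \emph{every} morphism $t:L\to X$, the existence of a pushout of $\iota$ along $t$ whose iterates all remain kernel--cokernel pairs. The datum you produced does not by itself yield this: the pushout of the inflation $(\iota,w)$ along $t$ (which exists by $\mathrm{Ex}2$) contains a canonical copy of $\Ker g$, and that copy is a retract --- so that the complement computes the pushout of $\iota$ along $t$ --- exactly when the equation $w=\chi t-\varphi\iota$ is solvable with $\chi:X\to\Ker g$, $\varphi:B\to\Ker g$; this works for $t=w$ (take $\chi=\mathrm{id}$, $\varphi=0$) but fails for arbitrary $t$. (b) The lemma is not genuinely weaker than $\mathrm{Ex}1$: in your situation $\iota$ is the pullback of the inflation $j=\ker(g)$ along the deflation $f$, and the standard exact-category proof that such a pullback is an inflation consists in writing $\iota=\Ker(gf)$ and invoking the fact that $gf$ is a deflation, i.e.\ $\mathrm{Ex}1$ itself; so your reduction moves between statements of essentially equivalent strength. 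Closing this circle is what occupies the technical heart of \cite{Crivei} and \cite{SW}: a chain of lemmas on semi-stable kernels and cokernels whose crux is that a composite of two semi-stable cokernels is again a cokernel, proved by an explicit analysis of the pushout of one kernel along the cokernel of the other. None of that is reproduced or replaced in your proposal, so the proof is incomplete at its decisive step.
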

%

\begin{remark}\label{R3.1}
Any projectively complete category $\cE$ is weakly idempotent complete and additive.
In particular any  $2$-quasi-abelian category is projectively complete.
If $(\cE,{\cE}x)$ is a Quillen exact structure on $\cE$ and  $gf$ is a deflation, hence  $g$ is a deflation (\cite[Prop. 7.6.]{Buh}).
\end{remark}
%

\begin{definition}\label{D3.1}
Let $(\cE,{\cE}x)$ be an exact category. A complex $X^\point$ with entries in $\cE$ is called
\emph{acyclic} if each differential $d^n:X^n\to X^{n+1}$ decomposes in $\cE$ as
$d^n=m_n\circ e_n:
\xymatrix{X^n\ar@{->>}[r]^{e_n} & D^n\ar@{>->}[r]^{m_n} & X^{n+1}}$
where $m_n$ in an inflation, $e_n$ is a deflation and the sequence
$\xymatrix{D^n\ar@{>->}[r]^(0.4){m_n} & X^{n+1}\ar@{->>}[r]^(0.4){e_{n+1}} & D^{n+1}}$ belongs to
${\cE}x$ for any $n\in\Bbb Z$.
\end{definition}
%

\numero{\bf The Derived Category of a projectively complete exact category}.\label{N3.2} 
Let us recall Neeman's construction of   the ``derived'' category to a projectively complete exact category$(\cE,{\cE}x)$ (\cite{NeeDEC}).  
Let $K(\cE)$ be the homotopy category of chain complexes in $\cE$ and
let $\cN_{{\cE}x}$ be the full subcategory of $K(\cE)$ whose objects are acyclic complexes.
By \cite[Lem.~1.1]{NeeDEC}, $\cN_{{\cE}x}$ is a  
triangulated subcategory. 
By \cite[Lem.~1.2 and Rem.~1.8]{NeeDEC} $\cN_{{\cE}x}$ is a thick full triangulated subcategory of $K(\cE)$ if and only if $\cE$ is projectively complete. 
The \emph{derived category} $D(\cE)$ of a projectively complete exact category $(\cE,{\cE}x)$ is, by definition, the quotient (as triangulated categories) of $K(\cE)$ by $\cN_{{\cE}x}$.
Whenever the exact structure on $\cE$ is not specified, we will endow $\cE$ with its maximal
Quillen exact structure (\ref{P:Crivei}).
%
 
\begin{lemma}\label{L:mD=mK}
Let $(\cE,{\cE}x)$ be a projectively complete category endowed with a Quillen exact structure.
For any $X^\point\in C^{\leq 0}(\cE); Y^\point\in C^{\geq 0}(\cE)$ we have
\[D(\cE,{\cE}x)(X^\point,Y^\point)=K(\cE)(X^\point,Y^\point).\]
\end{lemma}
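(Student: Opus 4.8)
The plan is to study the map
\[
\Phi\colon K(\cE)(X^\point,Y^\point)\longrightarrow D(\cE,{\cE}x)(X^\point,Y^\point)
\]
induced by the localization functor $K(\cE)\to K(\cE)/\cN_{{\cE}x}=D(\cE,{\cE}x)$, and to prove it is bijective. First I would normalise the complexes: since $X^\point\in K^{\leq 0}(\cE)$ and $Y^\point\in K^{\geq 0}(\cE)$ I may assume $X^i=0$ for $i>0$ and $Y^j=0$ for $j<0$. With these supports a chain map $f\colon X^\point\to Y^\point$ has $f^i\colon X^i\to Y^i$ forced to vanish for $i\neq 0$, and every homotopy $h^i\colon X^i\to Y^{i-1}$ is forced to vanish as well (a nonzero $h^i$ would need $X^i\neq 0$, i.e. $i\leq 0$, and $Y^{i-1}\neq 0$, i.e. $i\geq 1$). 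Hence
\[
K(\cE)(X^\point,Y^\point)=\{\,f^0\in\cE(X^0,Y^0)\mid d^0_Yf^0=0,\ f^0d^{-1}_X=0\,\},
\]
with trivial homotopy relation. It then remains to show $\Phi$ is injective and surjective onto this group.

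For injectivity I would use the standard fact that a morphism in a Verdier quotient by a thick subcategory vanishes exactly when it factors, in $K(\cE)$, through an object of that subcategory. Thus if $\Phi(f)=0$ there are $\alpha\colon X^\point\to N^\point$ and $\beta\colon N^\point\to Y^\point$ in $K(\cE)$ with $N^\point\in\cN_{{\cE}x}$ and $f=\beta\alpha$; comparing degree-$0$ components (and using that all homotopies between such maps are trivial) gives $f^0=\beta^0\alpha^0$. Now write the conflation decompositions of the acyclic $N^\point$ around degree $0$ (Definition~\ref{D3.1}): $d^0_N=m_0e_0$ and $d^{-1}_N=m_{-1}e_{-1}$, where $m_{-1}\colon D^{-1}\rightarrowtail N^0$ is the kernel of $e_0$ in the conflation $D^{-1}\rightarrowtail N^0\twoheadrightarrow D^0$. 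Since $X^1=0$, the chain-map identity yields $d^0_N\alpha^0=0$, and $m_0$ being an inflation (hence monic) forces $e_0\alpha^0=0$, so $\alpha^0$ factors through $m_{-1}$. Dually, since $Y^{-1}=0$ we get $\beta^0d^{-1}_N=0$, and $e_{-1}$ being a deflation (hence epic) forces $\beta^0m_{-1}=0$. Therefore $f^0=\beta^0\alpha^0=0$, i.e. $f=0$ in $K(\cE)$, proving injectivity.

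For surjectivity I would represent a morphism $\phi$ of $D(\cE,{\cE}x)$ by a right fraction $X^\point\xleftarrow{\,t\,}Z^\point\xrightarrow{\,b\,}Y^\point$ with $\Cone(t)\in\cN_{{\cE}x}$ (calculus of fractions for the Verdier quotient), and attempt to lift $b$ along $t$. Applying $K(\cE)(-,Y^\point)$ to the rotated triangle $\Cone(t)[-1]\xrightarrow{w[-1]}Z^\point\xrightarrow{t}X^\point$ shows that $b$ equals $g\circ t$ in $K(\cE)$ for some $g\in K(\cE)(X^\point,Y^\point)$ precisely when the obstruction $b\circ w[-1]\in K(\cE)(\Cone(t)[-1],Y^\point)$ vanishes; once $b=gt$, invertibility of $t$ in $D(\cE,{\cE}x)$ forces $\phi=\Phi(g)$. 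The mechanism that should make the obstruction vanish is the \emph{dual} of the injectivity computation: because $Y^{<0}=0$ the obstruction is supported in non-negative degrees, while $\Cone(t)$ is acyclic, so in the relevant window its top differential is a deflation and its kernel is the image of the next differential, whence the obstruction factors through a deflation and admits a contracting homotopy built through the conflations of $\Cone(t)$.

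The step I expect to be the main obstacle is exactly this last vanishing. The difficulty is that $Z^\point$, and hence $\Cone(t)$, need not be bounded above, so one cannot simply truncate to a finite complex — good truncations are unavailable here without assuming $D(\cE,{\cE}x)$-kernels, which this lemma does not presuppose. The resolution I would pursue is to exploit the bound $Y^\point\in K^{\geq 0}(\cE)$ to confine the whole computation to the finitely many degrees in which $Y^\point$ is nonzero: there the chain-map relations for $t$ and $b$, together with the conflation structure of the acyclic cone (top differential a deflation, its kernel the image of the preceding differential), should force the obstruction to factor through a deflation and hence to be null-homotopic, just as in the injectivity argument a morphism killed by a monomorphism composed with an epimorphism is zero. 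Carrying out this degreewise solution and checking compatibility with the differentials bordering the window is the delicate calculation on which surjectivity rests.
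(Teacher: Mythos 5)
Your normalisation of the Hom groups and your injectivity argument are correct and complete: factoring $f$ through an acyclic $N^\point$ and using that $m_{-1}=\Ker(e_0)$ receives $\alpha^0$ while $\beta^0m_{-1}=0$ uses only the kernel/cokernel properties of the conflations of $N^\point$, so that half is sound (and is in fact more explicit than what the paper writes). The genuine gap is in surjectivity: the obstruction $b\circ w[-1]\in K(\cE)(\Cone(t)[-1],Y^\point)$ does \emph{not} vanish in general, and the proposed mechanism is not the dual of the injectivity computation. To null-homotope a chain map $c$ from an acyclic complex $M^\point$ into $Y^\point$ you can indeed factor $c^0=\gamma_0e_0$ through the deflation $e_0\colon M^0\twoheadrightarrow D^0$, but the homotopy must satisfy $h^1m_0=\gamma_0$, i.e.\ $\gamma_0$ must \emph{extend along the inflation} $m_0\colon D^0\rightarrowtail M^1$; this is a lifting problem (its obstruction is an $\Ext^1$-class), not a kernel/cokernel manipulation, and objects of $\cE$ are not injective. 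Concretely, let $\cE$ be the category of abelian groups with its maximal exact structure, $Y^\point=\bZ/2\bZ$ in degree $0$, and $M^\point=[\bZ\xrightarrow{2}\bZ\to\bZ/2\bZ]$ in degrees $0,1,2$, which is acyclic: the projection $c^0\colon\bZ\to\bZ/2\bZ$ defines a chain map $c\colon M^\point\to Y^\point$ that is not null-homotopic, since any $h^1$ gives $h^1\circ 2=0\neq c^0$. The fraction $0\xleftarrow{\,t=0\,}M^\point\xrightarrow{\,c\,}Y^\point$ is legitimate ($\Cone(t)=M^\point[1]\in\cN_{{\cE}x}$, and $0\in K^{\leq 0}(\cE)$), it represents the zero morphism, which certainly lies in the image of $\Phi$, yet its obstruction is $\pm c\neq 0$ in $K(\cE)$. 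Since every element of $K(\cE)(M^\point,Y^\point)$ with $M^\point$ acyclic arises this way as an obstruction, your claimed vanishing is equivalent to $K(\cE)(M^\point,Y^\point)=0$ for all acyclic $M^\point$, which is false. Note also that your diagnosis of the difficulty (unboundedness of $Z^\point$) is off target: the counterexample is bounded and $Y^\point$ sits in a single degree, so no window argument can repair it.

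What your approach is missing is precisely the freedom it never uses: a morphism of $D(\cE,{\cE}x)$ is an equivalence class of fractions, and surjectivity requires replacing the given fraction by an equivalent one before attempting the lift; that replacement is where the real content lies, and nothing in the degreewise computation you sketch performs it. The paper's proof avoids fixing a fraction altogether: given $\alpha\in D(\cE,{\cE}x)(X^\point,Y^\point)$ it composes with the canonical chain maps $X^0[0]\to X^\point$ and $Y^\point\to Y^0[0]$, identifies the resulting morphism of $D(\cE,{\cE}x)$ with a morphism $i\colon X^0\to Y^0$ of $\cE$ (full faithfulness of $\cE\to D(\cE,{\cE}x)$), observes that $d^0_{Y^\point}i=0$ and $i\,d^{-1}_{X^\point}=0$ because the relevant compositions are null-homotopic, and then checks that the chain map so obtained represents $\alpha$. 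Your injectivity argument would complement that line of reasoning nicely, but the surjectivity half, as proposed, fails.
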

%
\begin{proof}
Given $\alpha\in D(\cE,{\cE}x)(X^\point,Y^\point)$
 the composition $i:X^0[0] \to X^\point\stackrel{\alpha}\to Y^\point \to Y^0[0]$
produces a morphism $i:X^0\to Y^0$ in $\cE$ such that $d^0_{Y^\point}i=0$ and $id^{-1}_{X^\point}=0$.
Hence $i$ induces a morphism in $K(\cE)$ which represents $\alpha$.
\end{proof}
%
\section{Freyd categories and coherent functors}\label{Ap1}

We will consider $\cC$ a category in the classical terminology
(for which any homomorphism class $\cC(X,Y)$, with
$X,Y$ objects in $\cC$, is a set).
Some authors define this a locally small category
in order to underline that its homomorphism form a set.
The wider notion of category, which permits to consider
also homomorphism which does not form a set, is very convenient 
once working with localization procedures.

\begin{definition}\label{D1.1}
Let us recall that a category $\cC$ is called:\begin{enumerate}
\item \emph{pre-additive} if
any hom-set is a group and the composition in bilinear;
\item \emph{additive} if it is pre-additive with zero object and biproducts;
\item \emph{idempotent complete}
\footnote{
It also called Karoubian by some authors.}   if any idempotent splits; 
\item \emph{projectively complete}
\footnote{It is also called Cauchy complete in \cite{Str}, or amenable by \cite{Fr}.}  when it is additive and idempotent complete.
\end{enumerate}
\end{definition}

We will use the notation: 
$\cC$ for a pre-additive category, $\cB$ for an additive category,
$\cP$ for a  projectively complete category,
$\cE$ for a Quillen exact category (see \ref{N3.1})
and $\cA$ for an abelian category.

\numero\label{N1.0}


We denote by
{\bf $\rMod\cC$} the enriched category of additive contravariant functors 
(i.e., $F:\cC^\circ \to {\cA}b$)
from
$\cC$ to  the category ${\cA}b$ of abelian groups, and by
{\bf x} the one of covariant functors (see \cite{KRdaf} \cite{AK}, \cite{Str}, \cite{MiRSO}).
The following functors are the enriched version of the Yoneda ones:
\[
\begin{matrix}
Y_{\cC}: \cC & \longrightarrow &\rMod\cC\hfill  \\
\hfill X & \longmapsto &\cC_X:=\cC(\_,X) \\
\end{matrix}\qquad
\begin{matrix}
{}_{\cC}Y: \cC& \longrightarrow &(\cC\lMod)^{\circ}  \hfill  \\
\hfill X & \longmapsto & {}_X{\cC}:={\cC}(X,\_) \\
\end{matrix}
\]
they admit an additive analogue of the Yoneda Lemma.

\begin{remark}\label{R1.2} 
Let $\cC$ be a pre-additive category (not necessarily small). One can perform 
the  \emph{projective completion}  of $\cC$ formally adding the zero object and finitely direct sums of objects 
in $\cC$, hence taking its idempotent completion. We denote by
${\mathrm{add}}(\cC)$ the projective completion
of $\cC$ (see for example \cite{BalmerS}).
Let \emph{$\rproj\cC$} (resp. \emph{$\cC\lproj$}) be the full subcategory of $\rMod\cC$
(resp. of $\cC\lMod$)
whose objects are direct summands of 
 finite direct sums of representable functors (we note that natural transformations between
two such objects always form  a set).  Hence ${\mathrm{add}}(\cC)$, 
$\rproj\cC$ and $\cC\lproj$
are equivalent.
Whenever $\cP$ is a projectively complete category ,
  $\rproj\cP$ is equivalent to $\cP$ and it coincides
with the full subcategory of $\rMod\cP$ of representable functors.
Any additive functor $F:\cC^\circ \to {\cA}b$ can uniquely be extended to
an additive functor $\overline{F}:(\rproj\cC)^\circ \to {\cA}b$, thus
$\rMod\cC$ is equivalent to $\rMod\rproj\cC$.
\end{remark}
%

\numero{\bf Coherent Functors}
In  his paper for the Proc. Conf. Categorical Algebra (La Jolla, Calif., 
1965) \cite{Au1} Auslander introduced the study of \emph{coherent functors} in the category $\rMod\cA$ 
with $\cA$ an abelian category (a ``genetic" introduction to this theme can be found in \cite{HCF}).
In the same collection Freyd \cite{Fr} introduced the study of the \emph{Freyd category} of 
\emph{finitely presented  functors} associated to a projectively complete category $\cP$.
These theories, and the related vocabulary, are widely inspired by the theory of  finitely presented and coherent modules over a ring $R$ which is also the easiest case (pre-additive category with a single object see \ref{N1.0}). 

The basic idea is that whatever one knows on finitely presented (resp. coherent) modules over a ring
has its counterpart for finitely presented (resp. coherent) functors in $\rMod\cC$.
It is well known (\cite[Ch.I]{Bour}, \cite[\S 1.5]{Bosch}) that, given a ring $R$,
right coherent modules $\rcoh R$ form a full abelian subcategory of all right $R$ modules $\rMod R$,
while finitely presented modules $\rfp R$ form a full 
projectively complete
subcategory of $\rMod R$ 
admitting cokernels. 
 The category $\rfp R$ is an abelian 
subcategory of $\rMod R$ if and only if the ring $R$ is right coherent. In that case
coherent and finitely presented modules coincide: $\rcoh R=\rfp R$  (these theorems go back to Henri Cartan).
In general finitely generated modules form a  full projectively complete
subcategory of $\rMod R$ and $\rfg R$ is an abelian subcategory of $\rMod R$
if and only if the ring $R$
 is right Noetherian,  in this case coherent modules coincide with  finitely generated ones:
$\rcoh R=\rfp R=\rfg R$.

The proofs of these statements 
are based on the fact that $R^n$ is a projective compact object in $\rMod R$, hence the functor
$\Hom_R(R^n,\_)$ commutes with all colimits (and limits too). 
By the Yoneda's Lemma any direct summand of a finite direct sum of representable functors  in 
$\rMod\cC$ (i.e., an element in $\rproj\cC$) is projective and compact
hence 
Cartan's results extend to coherent  functors replacing 
the role of $R^n$ by that of objects in $\rproj\cC$.
By Remark~\ref{R1.2}, 
$\rMod\cC$ and $\rMod\proj(\cC)$ are equivalent,
hence, from now on, given any pre-additive category
we will pass to its projective completion
$\cP:=\proj(\cC)$.

Freyd's work \cite{Fr} has been further investigated and developed by Beligiannis in
his very inspiring paper \cite{Be} to which we  refer (see also \cite{Au1}). 

\begin{definition} \label{D:cohfun}
An object $F\in\rMod\cP$ is called 
\emph{finitely generated} if 
there exists an epimorphism
$\cP_X\twoheadrightarrow F$ with $X\in\cP$.
An object $F\in\rMod\cP$ is called 
\emph{finitely presented}  if it fits into an exact sequence in $\rMod\cP$:
$\xymatrix{
\cP_{X_1}\ar[r] & \cP_{X_2}\ar[r] & F\ar[r] & 0\\
}$
with $X_i\in\cP$ for $i=1,2$.
An object $F$ finitely generated is called \emph{coherent}
if any
subobject $G\hookrightarrow F$ finitely generated is
finitely presented. Hence any finitely generated subfunctor of a coherent functor  is coherent.
We will denote by {\bf $\rfg\cP$}, resp. {\bf $\rfp\cP$}, resp. {\bf $\rcoh\cP$} the full subcategory of $\rMod\cP$ 
whose objects are the finitely generated, resp. finitely presented, resp. coherent functors. 
Following Beligiannis \cite[Def.~3.1]{Be} the  categories $\rfp\cP$ and $(\cP\lfp)^{\circ}$ are called the \emph{Freyd categories}
of $\cP$. 

\end{definition}

We obtain the following commutative diagram of fully faithful functors:
\begin{equation}\label{Dia1}\xymatrixrowsep{1 em}
\xymatrix{
& \cP \ar@{^(->}[d]_{P_{\cP}} \ar[rrd]^{Y_{\cP}}& & \\
\rcoh\cP  \ar@{^(->}[r] & \rfp\cP\ar@{^(->}[r] &\rfg\cP\ar@{^(->}[r] & \rMod\cP \\
} \end{equation} 
where by definition $P_{\cP}$ is the Yoneda functor whose codomain is restricted to 
finitely presented functors.
(The class of
natural transformations between finitely generated functors is a set since, if $\cP_X\twoheadrightarrow F$
and $\cP_Y\twoheadrightarrow G$, any morphism $\alpha:F\to G$ can be lifted to a morphism
 $\cP_X\to\cP_Y\in\cP(X,Y)$).

%

\begin{numero}\label{N1.1}
Given $\cP$ a projectively complete category, Freyd proved in
\cite{Fr} that $\rfp\cP$ is projectively complete, it has cokernels and an object $F$
is projective in $\rfp\cP$ (i.e., for any epimorphism $p:G_1\twoheadrightarrow G_2$ in $\rfp\cP$ 
the map $\rfp\cP(F,G_1)\to \rfp\cP(F,G_2)$ is surjective) if and only if 
$F\cong \cP_X$.
%

\begin{definition}\label{D1.2b}
Let $\cC$ be a pre-additive category.
A family of objects $\cG$ is called a 
\emph{generating family} if, for any non zero
morphism $f:C\to D$ in $\cC$, there exists a morphism
$h:G\to C$ with $G$ in $\cG$, such that $f\circ h\not= 0$.
A co-generating family of $\cC$ is a generating  family of $\cC^\circ$.
\end{definition}
%

\begin{remark}\label{R1.2b}
Let $\cP$  be projectively complete, hence it
is a generating (resp. co-generating)
family of projective (resp. injective) objects for $\rfp\cP$
(resp. $(\cP\lfp)^{\circ}$).
\end{remark}
%

\numero\label{N:A.8}
In \cite{Be} Beligiannis, following Freyd, 
proved that the pair $(\rfp\cP,P_{\cP})$ 
is ``universal" between the projectively complete categories 
with cokernels  ``containing an image" of $\cP$:
(
 given any other projectively closed category $\cD$ with cokernels  and an additive functor
$F:\cP\to \cD$  there exists unique a functor $F^c:\rfp\cP\to \cD$ 
cokernel 
preserving
such that $F^{c}\circ  P_{\cP}=F$). 
In \cite[Lem.~3.3]{Be} the author proved that $P_{\cP}$ 
preserves kernels and admits a left adjoint $\Phi_{\cP}$
 if and only if $\cP$ has cokernels. \end{numero}
%

\begin{definition}\label{D1.3}(\cite[p. 103]{Fr},  \cite[\S 4]{Be}).
A projectively complete category $\cP$ is called \emph{right (resp. left) coherent}
if for any $X\in\cP$ the functor $\cP_X$ (resp. ${}_X\cP$) is coherent.
$\cP$ is called \emph{coherent}
\footnote{We remark that the notion of coherent additive category has nothing to do 
with the one proposed by Peter Johnstone for a general category.} if it is both left and right coherent.
A pre-additive category $\cC$ is called (resp. right, resp. left) coherent if and only if the category
$\proj(\cC)$ is (resp. right, resp. left) coherent. 
\end{definition}
%
%

This statement, which is probably originally due to H. Cartan, is proposed 
 in its version for a ring $R$, as an exercise in Bourbaki \cite[\S 2 Exer.~11]{Bour} and explained in  great detail in \cite[\S 1.5]{Bosch}.  Here we propose its
 translation in the language of pre-additive categories; a detailed proof can be found in
 \cite[Appendix B]{Fi1}.

\begin{proposition}\label{P:CohAb}
The category $\rcoh\cC$ is closed under extension in $\rMod\cC$.
Moreover $\rcoh\cC$ is an abelian category and 
the canonical functor  $\rcoh\cC\to \rMod\cC$ is  exact.
\end{proposition}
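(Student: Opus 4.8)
The plan is to prove the three assertions — extension closure, the abelian structure, and exactness of the inclusion — by repeatedly invoking the internal characterization of coherence recorded in Proposition~\ref{PA.2}, namely that $F\in\rMod\cE$ is coherent precisely when $F$ is finitely generated and the kernel of every morphism from a finitely generated functor into $F$ is again finitely generated. Throughout I work inside the ambient abelian category $\rMod\cE$ (Proposition~\ref{P1.1}), computing all kernels, cokernels and images there, and I exploit two facts that are immediate from Definition~\ref{D:cohfun} and Proposition~\ref{P1.1}(iv): any finitely generated subfunctor of a coherent functor is coherent, and a representable functor $\cE_X$ is finitely generated and projective.

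For extension closure I would take a short exact sequence $0\to F_1 \xrightarrow{i} F \xrightarrow{j} F_2\to 0$ with $F_1,F_2$ coherent and first conclude that $F$ is finitely generated by Proposition~\ref{PA.1}(2). To verify the kernel condition of Proposition~\ref{PA.2}(ii), I take an arbitrary $\phi\colon G\to F$ with $G$ finitely generated and compose with $j$: since $F_2$ is coherent, $\Ker(j\phi)$ is finitely generated. The map $\phi$ carries $\Ker(j\phi)$ into $\Ker(j)=\im(i)\cong F_1$, producing $\phi'\colon \Ker(j\phi)\to F_1$ with finitely generated domain; coherence of $F_1$ then forces $\Ker(\phi')$ to be finitely generated. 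Since $\Ker(\phi)\subseteq\Ker(j\phi)$ one has $\Ker(\phi')=\Ker(\phi)$, which is what is needed, so $F$ is coherent.

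For the abelian structure I would show that $\rcoh\cE$ is closed under the kernels and cokernels computed in $\rMod\cE$; additivity is clear since $\cE_X\oplus\cE_Y\cong\cE_{X\oplus Y}$ is representable and finite direct sums of coherent functors are coherent by extension closure. Given $f\colon F\to G$ in $\rcoh\cE$, the kernel case is easy: by Proposition~\ref{PA.2}(ii) applied to the coherent functor $F$ and the morphism $f$ (whose domain $F$ is finitely generated), $\Ker(f)$ is a finitely generated subfunctor of the coherent functor $F$, hence coherent. The cokernel is the delicate point. Writing $K:=\im(f)$, which is a finitely generated subfunctor of $G$ and therefore coherent, I must show that the quotient $\Coker(f)=G/K$ is coherent; this is where I expect the real work. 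I would establish the lemma that the quotient of a coherent functor by a finitely generated subfunctor is coherent by forming, for any $\psi\colon H\to G/K$ with $H$ finitely generated, the pullback $H\times_{G/K}G$. The first projection onto $H$ is an epimorphism with kernel $K$, so $H\times_{G/K}G$ is finitely generated by Proposition~\ref{PA.1}(2); applying Proposition~\ref{PA.2}(ii) to the second projection $\rho\colon H\times_{G/K}G\to G$ (coherent codomain, finitely generated domain) yields that $\Ker(\rho)$ is finitely generated, and one checks $\Ker(\rho)\cong\Ker(\psi)$, giving coherence of $G/K$.

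With $\rcoh\cE$ closed under finite biproducts, kernels and cokernels taken in the abelian category $\rMod\cE$, the abelian axioms are inherited verbatim: for any morphism of coherent functors its kernel, cokernel, image and coimage all lie in $\rcoh\cE$, and the canonical map from coimage to image is the one of $\rMod\cE$, hence an isomorphism. This simultaneously shows that $\rcoh\cE$ is abelian and that the inclusion $\rcoh\cE\to\rMod\cE$ is exact. The main obstacle is the cokernel step, that is, proving that a quotient of a coherent functor by a finitely generated subfunctor remains coherent, for which the pullback argument above is the key device; everything else is a direct application of Propositions~\ref{PA.1} and~\ref{PA.2}.
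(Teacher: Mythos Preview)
Your argument is correct and follows the paper's approach closely: reduce everything to Propositions~\ref{PA.1} and~\ref{PA.2}, and for the cokernel use the pullback $H\times_{G/K}G$ exactly as the paper does (with the identification $\Ker(\rho)\cong\Ker(\psi)$). There is one slip of the pen in the kernel paragraph: Proposition~\ref{PA.2}(ii) must be applied to the coherent \emph{codomain} $G$, not to $F$, since that characterization tests morphisms \emph{into} the coherent object; the conclusion that $\Ker(f)$ is finitely generated (hence coherent as a subfunctor of $F$) is unaffected. For extension closure you verify criterion~(ii) via a neat two-step kernel computation ($\Ker(j\phi)$ then $\Ker(\phi')$), whereas the paper instead checks the original definition directly by showing an arbitrary finitely generated subobject of $F$ is finitely presented via a pullback diagram --- a cosmetic difference only.
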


Let use recall that $\rMod(\cC^\circ)=\cC\lMod$ hence, passing to the opposite category, one can recover 
the previous results for left modules.

\smallskip
Given an additive category $\cB$, Freyd introduced in \cite[p.~99]{Fr}  
the notion of \emph{weak kernel}
 which permits to define the notion of \emph{weak pull-back square}.
 An additive category $\cB$ admits weak pull back square if and only if it admits weak kernels.
In \cite[Ch. 6, 6.1.1]{NeeTrCat} Neeman independently introduced the notion of
\emph{homotopy pull-back square}  which coincides with Freyd weak pull-back square.

\begin{definition}\label{D1.4}
Let $A\stackrel{f}\to B$ be a morphism in an additive category $\cB$.
A weak kernel of $f$ is a map $K\stackrel{i}\to A$ such that
$f i=0$ and, for any $X\stackrel{j}\to A$ with $f j=0$, there exists, possibly many,
$X\stackrel{\alpha}\to K$ such that $i \alpha=j$.
 $\cB$ has 
weakly  (or equivalently homotopy) pull-back squares if, 
given any pair $f_i:X_i\to Y$ with $i=1,2$, there exists
 an object $Z$ with the dashed arrows such that any commutative diagram of this type can be completed with
 (a not necessarily unique) dotted arrow:
\begin{equation}\label{E1}\xymatrixrowsep{1.2 em}
\xymatrix{
W\ar@{..>}[r] \ar[rd]\ar@/^1pc/[rr] &Z\ar@{-->}[r]^{g_1} \ar@{-->}[d]_{g_2} & X_1\ar[d]^{f_1} \\
& X_2\ar[r]^{f_2}& Y.\\
}
\end{equation}
One can define dually the notions of weak cokernel and weak push-out.
\end{definition}
%

\begin{proposition}\label{P1.3} (\cite[Prop.~4.5]{Be}).
Let $\cP$ be a projectively complete category.
The following are equivalent:
\begin{enumerate}
\item $\cP$ is right  coherent;
\item $\cP$ admits weak kernels;
\item $\rfp\cP=\rcoh\cP$  is an abelian exact full subcategory of $\rMod\cP$ whose
projective  objects are exactly the representable functors in $\cP$.
\end{enumerate}
Moreover:
\begin{itemize}
\item $\cP$ has kernels iff $\rfp\cP=\rcoh\cP$ is abelian with ${\rm gl.dim }(\rcoh\cP)\leq 2$;
\item $\rfp\cP=\rcoh\cP$ is abelian with 
${\rm gl.dim }(\rfp\cP)=0$ iff  $\cP\simeq \rcoh\cP$ is  semisimple;
\item  ${\rm gl.dim }(\rcoh\cP)=1$ iff $\cP$ is not abelian semisimple but for any morphism $f$ in
$\cP$ we have that $\Ker(f)$ is split monic. \end{itemize}
\end{proposition}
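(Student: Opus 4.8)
The plan is to prove the three-way equivalence $(1)\Leftrightarrow(2)\Leftrightarrow(3)$ by transporting the intrinsic notions in $\cP$ into exactness statements in $\rMod\cP$ via the Yoneda embedding $X\mapsto\cP_X$, and then to read off the three ``moreover'' clauses from the lengths of projective resolutions in $\rcoh\cP$. Throughout I would treat only the ``right'' versions, since the ``left'' statements follow verbatim by passing to $\cP^\circ$: one has $\rMod(\cP^\circ)=\cP\lMod$, and a weak kernel in $\cP^\circ$ is exactly a weak cokernel in $\cP$. The basic dictionary I would record first: for $f\colon A\to B$ in $\cP$ and any $Z$ one has $\Ker(\cP_f)(Z)=\{\,j\colon Z\to A\mid fj=0\,\}$, while for $i\colon K\to A$ with $fi=0$ the subfunctor $\im(\cP_i)\subseteq\cP_A$ has value $\{\,i\alpha\mid\alpha\colon Z\to K\,\}$ at $Z$. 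Hence $i$ is a weak kernel of $f$ (Definition~\ref{D1.4}) if and only if $\im(\cP_i)=\Ker(\cP_f)$, i.e.\ $\cP_K\to\cP_A\to\cP_B$ is exact at $\cP_A$; consequently ``$f$ admits a weak kernel'' is equivalent to ``$\Ker(\cP_f)$ is finitely generated'', an epimorphism $\cP_K\twoheadrightarrow\Ker(\cP_f)$ being precisely a weak kernel.

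For $(1)\Leftrightarrow(2)$ I would then show that $\cP$ is right coherent if and only if every $\Ker(\cP_f)$ is finitely generated. This is immediate from Proposition~\ref{PA.2}(ii) once the coherence test of $\cP_A$ is reduced from an arbitrary finitely generated $\phi\colon G\to\cP_A$ to a representable one: choosing an epimorphism $p\colon\cP_X\twoheadrightarrow G$, the composite $\phi p$ corresponds by Yoneda to a morphism of $\cP$, and $\Ker(\phi)=p(\Ker(\phi p))$ is a quotient of a finitely generated functor, hence finitely generated by Proposition~\ref{PA.1}(1). For $(1)\Leftrightarrow(3)$ I would first note that coherent functors are finitely presented, so $\rcoh\cP\subseteq\rfp\cP$ always, with equality exactly when every representable is coherent, i.e.\ when $\cP$ is right coherent; in that case $\rfp\cP=\rcoh\cP$ is abelian with exact inclusion into $\rMod\cP$ by Proposition~\ref{P:CohAb}, and its projectives are precisely the representables by \ref{N1.1}. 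Conversely, if $\rfp\cP$ is an abelian exact subcategory then kernels there agree with kernels in $\rMod\cP$, so each $\Ker(\cP_f)$ lies in $\rfp\cP$, is finitely presented, hence finitely generated, and coherence follows from $(1)\Leftrightarrow(2)$.

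For the quantitative clauses I would compare a presentation $\cP_{X_1}\xrightarrow{\cP_g}\cP_{X_2}\to F\to0$ of a coherent $F$ with its syzygies. Having honest kernels is equivalent to $\Ker(\cP_g)$ being representable: an honest kernel $k\colon K\to X_1$ makes $\cP_k$ monic (since $k$ is monic) and surjective onto $\Ker(\cP_g)$ (weak-kernel property), so $\cP_K\cong\Ker(\cP_g)$ is projective and $0\to\cP_K\to\cP_{X_1}\to\cP_{X_2}\to F\to0$ has length $2$; conversely a representable first syzygy $\cP_K\hookrightarrow\cP_{X_1}$ corresponds to a monic weak kernel, which is an honest kernel. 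This gives $\mathrm{gl.dim}(\rcoh\cP)\le2$. The case $\mathrm{gl.dim}=0$ forces every coherent functor to be projective, hence representable, so $\cP\cong\rcoh\cP$ is abelian with all objects projective, i.e.\ semisimple; conversely over a semisimple $\cP$ every $g$ splits, whence $\Coker(\cP_g)$ is again representable. Finally $\mathrm{gl.dim}=1$ is equivalent to the syzygy $\im(\cP_g)$ being projective for every $g$, which, via the sequence $0\to\Ker(\cP_g)\to\cP_{X_1}\to\im(\cP_g)\to0$, means $\Ker(\cP_g)$ is a direct summand of $\cP_{X_1}$, i.e.\ the weak kernel of $g$ is a split monomorphism, together with $\cP$ not semisimple to exclude dimension~$0$.

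The main obstacle, I expect, is the bookkeeping in the split-monic clause: one must verify that a monic (in particular split monic) weak kernel coincides with an honest kernel, and that projectivity of the relevant syzygy is equivalent to the \emph{splitting}, not merely the existence, of the weak-kernel sequence. Both points rest on Freyd's identification (\ref{N1.1}) of the projectives of $\rfp\cP$ with the representables, and on the fact that in a projectively complete category a direct summand of a representable is again representable.
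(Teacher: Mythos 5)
Your proof is correct, but there is nothing in the paper to compare it against: the paper states Proposition~\ref{P1.3} purely as a quotation of Beligiannis (\cite[Proposition 4.5]{Be}) and supplies no argument of its own, so what you have written is a genuine addition rather than a variant of an internal proof. Your route is self-contained modulo the paper's appendix machinery: the Yoneda dictionary (a weak kernel of $f\colon A\to B$ is the same datum as an epimorphism from a representable onto $\Ker(\cP_f)$, i.e.\ the exactness of $\cP_K\to\cP_A\to\cP_B$ at $\cP_A$) reduces $(1)\Leftrightarrow(2)$ to Propositions~\ref{PA.1} and~\ref{PA.2}, the equivalence with $(3)$ follows from Proposition~\ref{P:CohAb} together with Freyd's identification in \ref{N1.1} of the projectives of $\rfp\cP$ with the representables, and the three dimension clauses come out of the syzygy analysis, using that in a projectively complete category a direct summand of a representable is representable and that a monic weak kernel is an honest kernel. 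Two steps deserve to be made explicit rather than left implicit: first, your dictionary tacitly uses that epimorphisms in $\rMod\cP$ are pointwise surjective (true, since kernels, cokernels and images there are computed pointwise); second, in the converse directions of the clauses $\mathrm{gl.dim}\leq 2$ and $\mathrm{gl.dim}=1$ you pass from a bound on the projective dimension of $F=\Coker(\cP_g)$ to the projectivity of the syzygy $\Ker(\cP_g)$, respectively $\im(\cP_g)$, sitting in the \emph{specific} presentation by $\cP_{X_1}\to\cP_{X_2}$; this requires the generalized Schanuel lemma (or dimension shifting), which is available because $\rcoh\cP$ has enough projectives (Remark~\ref{R1.2b}), but it is the one genuinely homological ingredient and should be named. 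With those two points spelled out, the argument is complete and correctly calibrated to the paper's definitions.
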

%

\section{$t$-structures}\label{Ap2}

\numero{\bf Horthogonal  classes.}\label{N:oc}
Let $\cC$ be a pre-additive category and $\cU\subseteq \cC$;
we set
$\cU^{\perp}=\{C\in\cC\; |\; \cC(U,C)=0\; \forall U\in\cU\}$ and 
${\;}^{\perp}\cU=\{C\in\cC\; |\; \cC(C,U)=0\; \forall U\in\cU\}$.
%

\numero{\bf $t$-structures.}\label{B2.1}
The notion of $t$-structure is the analog for triangulated categories of that of torsion pair
for abelian categories.
Given $\cC$ a triangulated category, we will denote by $[1]$ its suspension  functor,
by $[n]$ its $n^{\rm th}$-iterated functor with $n\in\Bbb Z$ 
and we will use the notations $X\to Y\to Z{\stackrel{+}\to} $ 
for a distinguished triangle.
When we say that $\mathcal{U}$ is a subcategory of $\cC$, we always mean that $\mathcal{U}$
is a full subcategory which is closed under isomorphisms, finite direct sums and
direct summands.
 
If $\mathcal{U},\mathcal{V}$ are full subcategories of $\cC$, then we denote by 
 $\mathcal{U}\star \mathcal{V}$ 
 the full subcategory of $\cC$ consisting of objects $X$ which may be included in a distinguished triangle $U\to X\to V \stackrel{+}\to\,$ in 
$\mathcal{C}$, with $U \in \mathcal{U}$ and $V \in \mathcal{V}$.
 By the octahedral axiom we have 
$(\mathcal{U}\star \mathcal{V})\star\mathcal{W}=\mathcal{U}\star (\mathcal{V}\star\mathcal{W})$ (\cite{IY}).
 $\mathcal{U}$ is called \emph{extension closed}
if $\mathcal{U}\star\mathcal{U}=\mathcal{U}$.
In general $\mathcal{U}\star\mathcal{V}$ is not idempotently complete
but by \cite[Prop.~2.1]{IY} if the subcategories are orthogonal
$\cC(\mathcal{U},\mathcal{V})=0$ hence $\mathcal{U}\star\mathcal{V}$ 
is closed under direct summands.

A \emph{$t$-structure} in a triangulated category $\cC$ (\cite{BBD}) is a pair $\cD:=(\cD^{\leq0},\cD^{\geq0})$ of full subcategories of $\cC$ such that
($\cD^{\leq n}:=\cD^{\leq0}[-n]$ and $\cDgg n:=\cDgg0[-n]$):
\begin{enumerate}
\item[\rm (i)] $\cDll0\subseteq\cDll1$ and $\cDgg0\supseteq\cDgg1$;
\item[\rm (ii)] $\Hom_\cC(X,Y)=0$, for every $X$ in $\cDll0$ and every $Y$ in $\cDgg1$;
\item[\rm (iii)] For any object $X\in\cC$ there exists a distinguished triangle in $\cC$
$A\to X\to B \stackrel{+}\to $
 with
$A\in\cDll0$ 
and $B\in\cDgg1$ (hence $\cC=\cD^{\leq 0}\star \cD^{\geq 1}$).
\end{enumerate}

By \cite[Prop.~1.3.3, Th.~1.3.6]{BBD}
the inclusion of $\cDll n$ in $\cC$ (resp.  of $\cDgg n$ in $\cC$) has a right adjoint 
$\delta^{\leq n}$ (resp. a left adjoint $\delta^{\geq n}$)
 called the {\rm truncation functor}. For every object $X$ in $\cC$  a unique morphism 
$d\colon\delta^{\geq 1}(X)\to \delta^{\leq 0}(X)[1]$ such that the triangle
$
\delta^{\leq 0}(X)\ra X\ra\delta^{\geq 1}(X)\overset{d}\ra \delta^{\leq 0}(X)[1]
$
is distinguished. 
This triangle is (up to a unique isomorphism) the unique distinguished triangle 
$(A,X,B)$ with $A$ in $\cDll0$ and $B$ in $\cDgg1$ and it is called
the \emph{approximating triangle} of $X$ (for the $t$-structure $\cD$).

The classes $\cD^{\leq 0}$ and $\cD^{\geq 0}$ are called the \emph{aisle} and the \emph{co-aisle} of the $t$-structure $\cD$.
The data of a $t$-structure is equivalent to the datum
of its aisle (which is by definition: ${\mathcal U}\hookrightarrow \cC$ full subcategory admitting a right adjoint, closed by $[1]$, stable under extension; see  \cite{KeVo} and \cite[7.2]{KeDerCarTil}).

The category $\cH_\cD:=\cDll0\cap\cDgg0$ is abelian and is called the {\rm heart} of the $t$-structure. 
The truncation functors induce functors
$\hH_\cD^i\colon \cC \to \cH_\cD$, $i\in\mathbb Z$, called the {\rm $t$-cohomological functors associated with the $t$-structure $\cD$}, defined as follows:
$\hH_\cD^0(X):=\delta^{\geq0}\delta^{\leq0}(X)\simeq\delta^{\leq0}\delta^{\geq0}(X)$ and for every $i\in \bZ$, 
$\hH_\cD^i(X):=\hH_\cD^0(X[i])$.

\numero{\bf Notation.}\label{A:not}
Given $\cD$ a $t$-structure on a triangulated category $\cC$ we will
denote by $\cD^{[a,b]}=\cD^{\geq a}\cap\cD^{\leq b}\subseteq \cC$ with $a\leq b$ in $\Bbb Z$.
Hence $\cD^{[a,a]}=\cH_\cD[-a]$.

Given an abelian category  $\cA$, we denote by
$[X^i\to X^{i+1}\to\cdots \to X^{i+n}]$ for $n\in \Bbb N$, the complex
in $C(\cA)$ in degrees $i$ to $i+n$ whose remains terms are $0$.
We  note by $X^{\geq n}$ (resp.  $X^{\leq n}$) the complex which coincides with 
$X^\point$ in degrees
greater than (resp. less than) or equal to $n$ 
and is zero otherwise.

Following \cite{BBD}  we will denote by
$\Hom^n_{\cC}(X,Y):=\Hom_{\cC}(X,Y[n])$.
Any distinguished triangle
$X\to Y\to Z{\stackrel{+}\to} $ provides $\forall T\in\cC$ the following  long exact sequences
{\small{ \[
\cdots \to \Hom^{-1}_{\cC}(T,Z)\to \Hom_{\cC}(T,X)\to 
\Hom_{\cC}(T,Y)\to \Hom_{\cC}(T,Z)\to \Hom_{\cC}^1(T,X)\to\cdots  \]\[
\cdots \to \Hom^{-1}_{\cC}(X,T)\to \Hom_{\cC}(Z,T)\to 
\Hom_{\cC}(Y,T)\to \Hom_{\cC}(X,T)\to \Hom_{\cC}^1(Z,T)\to\cdots
\]}}
\begin{definition}\label{Dnat}
Given an abelian category $\cA$, its derived category $D(\cA)$ 
admits  \emph{natural $t$-structure} whose aisle $D(\cA)^{\leq 0}$ 
(resp.  co-aisle $D(\cA)^{\geq 0}$) is the subcategory of complexes without cohomology in positive (resp. negative) degrees.
\end{definition}
%

\numero\label{N:point}
Let $\cP$ be a projectively complete category.
The notation 
 $[\cdots \to L\to \stackrel{\point}M\to N\cdots] $  indicates a complex in $K(\cP)$
 whose element $M$ is placed in degree zero.


\begin{definition}\label{D2.1} (\cite[4.5 and 4.6]{KrT}).
Let $\cC$ be a triangulated category.
A non-empty full subcategory $\cN$ of $\cC$ is called a 
\emph{thick triangulated subcategory}
if \begin{description}
\item[(TS1)]
for any $X\in\cN$ and $i\in\Bbb Z$ we have $X[i]\in\cN$;
\item[(TS2)] given any distinguished triangle $X\to Y\to Z{\stackrel{+}\to}$ in $\cC$
if two objects from $\{X, Y , Z\}$ belong to $\cN$, then also the third one is in $\cN$;  
\item[(TS3)]  $\cN$ is closed under direct factors.
\end{description}
\end{definition}

One can attach to any thick subcategory $\cN$ of $\cC$  its multiplicative
system (compatible with the triangulation) 
$\Sigma(\cN)$ containing all the morphisms $X\stackrel{f}\to Y$ in $\cC$ fitting
in a distinguished triangle $X\stackrel{f}\to Y\to Z\stackrel{+}\to$ with $Z\in\cN$.
Hence one can
perform the quotient category $\cC/\cN:=\cC[\Sigma(\cN)^{-1}]$ 
(which is a category in a wider sense since it could be not locally small)
endowed with the quotient
functor $Q:\cC\to \cC/\cN$ such that by \cite[Prop.~4.6.2]{KrT}:
\begin{enumerate}
\item  $\cC/\cN$ carries a unique triangulated structure such that $Q$ is exact;
\item a morphism in $\cC$ is annihilated by $Q$ if and only if it factors through an object
in $\cN$ and moreover $\cN=\Ker Q$ (since it is thick);
\item every exact functor $\cC \to \cU$ annihilating $\cN$ factors uniquely through $Q$ via an
exact functor $\cC/\cN\to \cU$.
\end{enumerate}

\begin{numero}\label{AN}
Given $\cA$ an abelian category 
the subcategory 
\[
\cN:=\{X^\point\in K(\cA)\; | \; H^i(X^\point)=0, \; \forall i\in\Bbb Z\}
\]
is a thick subcategory of $K(\cA)$ and the quotient $K(\cA)/\cN=:D(\cA)$
defines the derived category of $\cA$ (which might be non-locally small).
\end{numero}
%

\begin{remark}\label{R2.1}
Let $\cA$ be an abelian category. If $\cA$ 
has a generating family $\cP$ of projectives 
$\cN:=\{X^\point\in K(\cA)\; | \; \Hom_{K(\cA)}(P[i],X^\point)=0, \; \forall i\in\Bbb Z\;\hbox{ \rm and } \forall P \in
\cP\}.$

(Dually 
$ \cN:=\{X^\point\in K(\cA)\; | \; \Hom_{K(\cA)}(X^\point,I[i])=0, \; \forall i\in\Bbb Z\;\hbox{ \rm and } \forall I
\in\cI\}$ if $\cA$ has a cogenerating family of injectives $\cI$).
\end{remark}
\begin{lemma}\label{L2.2}(See \cite[Exer.~5.1.5]{KrT}).
Given $\cA$  an abelian category with enough projectives and finite global dimension
${\rm gl.dim }(\cA)=n$, 
let us denote by $\cP$ the projective objects in $\cA$.
The null system $\cN\cap K(\cP)=\{0\}$ 
and $K(\cP)\simeq D(\cA). $
%
\end{lemma}

Kashiwara and Schapira generalized the previous Lemma as follows:

\begin{proposition}\label{KS}(\cite[Prop.~13.2.6]{KS})
Let $\cA$ be an abelian category and $\cE$ a full additive subcategory of $\cA$ such that:
\begin{enumerate}
\item $\cE$ is cogenerating (resp. generating);
\item there exists $d>0$ such that, for any exact sequence {\small$Y_d\to \cdots \to Y_1\to Y\to 0$}
(resp. {\small$0\to Y_1\to \cdots \to Y_d$})
with $Y_j\in\cE$, we have $Y\in \cE$.
\end{enumerate}
The canonical functor below is a triangulated equivalence of categories
\[ {K(\cE)\over {K(\cE)\cap\cN}}\stackrel{\simeq}\longrightarrow D(\cA).\]
\end{proposition}

%
\begin{lemma}\label{Prop:Schn}\cite[Lem.~1.2.17]{Schn} Given a $t$-structure $\cT$
on a triangulated category $\cC$ and a  saturated null system  
$\cN$ with $Q:\cC\to\cC/\cN$ its canonical quotient functor; the essential images
$(Q(\cT^{\leq 0}),Q(\cT^{\geq 0}))$ form a $t$-structure on $\cC/\cN$ if and only if for any 
distinguished triangle $X_1\to X_0\to N\stackrel{+1}\to$ with $X_1\in\cT^{\geq 1}$,  $X_0\in\cT^{\leq 0}$ and
$N\in\cN$ we have
$X_1,X_0\in \cN$.
\end{lemma}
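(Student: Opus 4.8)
The plan is to verify directly that the pair $(Q(\cT^{\leq 0}), Q(\cT^{\geq 0}))$ of essential images satisfies the three axioms of a $t$-structure on $\cC/\cN$, isolating the orthogonality axiom as the only place where the displayed condition intervenes. First I would dispose of the two automatic axioms. Since $Q$ is exact and commutes with $[1]$, the nesting $Q(\cT^{\leq 0})\subseteq Q(\cT^{\leq 1})$ and $Q(\cT^{\geq 1})\subseteq Q(\cT^{\geq 0})$ is inherited from $\cC$. Moreover $Q$ is essentially surjective, so every object of $\cC/\cN$ is isomorphic to some $Q(X)$, and applying the exact functor $Q$ to the approximating triangle $\tau^{\leq 0}(X)\to X\to \tau^{\geq 1}(X)\+$ (where $\tau^{\leq 0},\tau^{\geq 1}$ are the truncation functors of $\cT$) yields a distinguished triangle in $\cC/\cN$ realising the decomposition axiom, with outer terms in $Q(\cT^{\leq 0})$ and $Q(\cT^{\geq 1})$. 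Thus both implications of the lemma reduce to comparing the orthogonality $\Hom_{\cC/\cN}(Q(\cT^{\leq 0}),Q(\cT^{\geq 1}))=0$ with the stated triangle condition.

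For the implication ($t$-structure $\Rightarrow$ condition) I would argue formally. Given a triangle $X_1\to X_0\to N\+$ with $X_1\in\cT^{\geq 1}$, $X_0\in\cT^{\leq 0}$ and $N\in\cN$, applying $Q$ and using $Q(N)=0$ (recall $\cN=\Ker Q$ because $\cN$ is thick, see \ref{D2.1}) produces an isomorphism $Q(X_0)\cong Q(X_1)$ in $\cC/\cN$. This common object lies simultaneously in $Q(\cT^{\leq 0})$ and in $Q(\cT^{\geq 1})$; if $(Q(\cT^{\leq 0}),Q(\cT^{\geq 0}))$ is a $t$-structure, the isomorphism is then a morphism from the aisle to the shifted co-aisle, hence zero by orthogonality. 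An isomorphism that is zero forces $Q(X_0)=0$, i.e. $X_0\in\cN$, and then $Q(X_1)\cong Q(X_0)=0$ gives $X_1\in\cN$ as well.

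For the converse ($\text{condition}\Rightarrow t$-structure) I would establish orthogonality by computing $\Hom_{\cC/\cN}(Q(X),Q(Y))$ for $X\in\cT^{\leq 0}$ and $Y\in\cT^{\geq 1}$ through the calculus of fractions for $\Sigma(\cN)$: such a morphism is represented by a roof $X\xleftarrow{s}Z\xrightarrow{f}Y$ with $\Cone(s)=:N_s\in\cN$. Writing $u\colon \tau^{\leq 0}(Z)\to Z$ for the truncation map, the octahedron built on $\tau^{\leq 0}(Z)\xrightarrow{u}Z\xrightarrow{s}X$ furnishes a distinguished triangle $\tau^{\geq 1}(Z)\to C\to N_s\+$ in which $C=\Cone(su)$ is an extension of $\tau^{\leq 0}(Z)[1]$ by $X$, both lying in the extension-closed aisle $\cT^{\leq 0}$, so $C\in\cT^{\leq 0}$. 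This triangle has precisely the shape of the hypothesis, with $\tau^{\geq 1}(Z)\in\cT^{\geq 1}$, $C\in\cT^{\leq 0}$ and $N_s\in\cN$; the condition therefore forces $\tau^{\geq 1}(Z)\in\cN$. Hence $u\in\Sigma(\cN)$, while $f\circ u\colon \tau^{\leq 0}(Z)\to Y$ is a morphism from $\cT^{\leq 0}$ to $\cT^{\geq 1}$ and so vanishes in $\cC$; thus the roof $(s,f)$ is killed after precomposition with $u\in\Sigma(\cN)$ and represents the zero morphism.

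The formal direction and the reduction to orthogonality are routine; the step I expect to be the main obstacle is the converse's orthogonality argument, and specifically the manufacture from a single roof of the octahedral triangle whose middle term $C$ remains inside the aisle $\cT^{\leq 0}$, which is exactly what allows the hypothesis to place $\tau^{\geq 1}(Z)$ in $\cN$. The supporting points requiring care are the extension-closedness of $\cT^{\leq 0}$ and the bookkeeping of the calculus of fractions, namely that annihilating $f$ by a $u\in\Sigma(\cN)$ genuinely certifies that the fraction is zero.
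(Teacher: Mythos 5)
Your proof is correct and complete: the reduction of both implications to the orthogonality axiom, the ``isomorphism killed by orthogonality'' trick for necessity, and the roof-plus-octahedron argument for sufficiency (where the key point, that $\Cone(su)$ lies in the extension-closed aisle $\cT^{\leq 0}$ so that the hypothesis forces $\tau^{\geq 1}(Z)\in\cN$ and hence $u\in\Sigma(\cN)$, $fu=0$) all hold up, including the calculus-of-fractions bookkeeping. Note that the paper itself offers no proof of this lemma — it is quoted with the citation \cite[Lemma~1.2.17]{Schn} — and your argument is essentially the standard one given there, so there is nothing in-paper to contrast it with.
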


\backmatter


\end{document}